\begin{document}

\def \cal {\mathcal}

\newcommand{\refeq}[1]{(\ref{#1})}
\def \beq {\begin {eqnarray}}
\def \eeq {\end {eqnarray}}
\def \ba {\begin {eqnarray*}}
\def \ea {\end {eqnarray*}}

\newtheorem{theorem}{Theorem}[section]
\newtheorem{lemma}[theorem]{Lemma}
\newtheorem{proposition}[theorem]{Proposition}
\newtheorem {definition}[theorem]{Definition}
\newtheorem {example}[theorem]{Example}
\newtheorem {corollary}[theorem]{Corollary}
\newtheorem {remark}[theorem]{Remark}

\def\R{{\mathbb R}}
\def\Z{{\mathbb Z}}
\def\S{{\mathbb S}}
\def\B{{\mathbb B}}
\def\C{{\mathbb C}}
\def\expec{{\mathbb E}}

\def\O{{\mathcal O}}

\def\re{{\rm Re}\,}
\def\im{{\rm Im}\,}

\def\bra{\langle}
\def\cet{\rangle}
\def\p{\partial}

\def\hat{\widehat}
\def\tilde{\widetilde}

\newcommand{\note}[1]{\marginpar{\footnotesize #1}}
\newcommand{\anote}[1]{\marginpar{{\color{red}\footnotesize #1}}}
\def\I{\mathcal{I}}
\def\rf{\lambda}
\def\df{\rho}
\def\dom{D}
\def\plane{\R^3_0}
\def\hs{\R^3_+}
\def\md{U}
\def\s{\sigma}
\newcommand{\Ft}{{\mathcal F}}

\def\cspace{\Theta}

\newcommand{\vect}[2]{\begin{pmatrix}
#1 \\ #2
\end{pmatrix}}

\newcommand{\Rt}{{\mathcal S}}
\newcommand{\btheta}{\boldsymbol\theta}

\newcommand{\norm}[1]{\left\|#1\right\|}

\newcommand{\pw}{\widetilde{W}}

\newcommand{\sset}{X}

\title[Inverse acoustic scattering problem in half-space]{Inverse acoustic scattering problem in half-space with anisotropic random impedance}
\author {Tapio Helin, Matti Lassas and Lassi P\"{a}iv\"{a}rinta}
\date{\today}
\maketitle

\begin{abstract}
We study an inverse acoustic scattering problem in half-space with a probabilistic impedance boundary value condition. The Robin coefficient (surface impedance) is assumed to be a Gaussian random function with 
a pseudodifferential operator describing the covariance.
We measure the amplitude of the backscattered field averaged over the frequency band and assume that the data is generated by a single realization of $\rf$. Our main result is to show that under certain conditions the principal symbol of the covariance operator of $\rf$ is uniquely determined. Most importantly, no approximations are needed and we can solve the full non-linear inverse problem.
We concentrate on anisotropic models for the principal symbol, which leads to the analysis of a novel anisotropic spherical Radon transform and its invertibility.
\end{abstract}

\section{Introduction}

In this work we study inverse acoustic scattering in half-space. 
We assume that the time-harmonic acoustic field $u$ satisfies the Helmholtz equation
\begin{equation}
	\label{eq:helmholtz}
	(\Delta+k^2) u(x) = \delta_y(x), \quad x = (x_1,x_2,x_3) \in \hs,
\end{equation}
where $\hs = {\R^2 \times (0,\infty)}$, $k\in\R_+$ is the wave number and $\delta_y$ is the Dirac delta distribution at $y\in\hs$, i.e., the propagating wave is generated by a point source located in the upper half-space.
Moreover, the total field $u = u(\cdot; y,k)$ is assumed to satisfy the impedance boundary value condition
\begin{equation}
	\label{eq:robin_boundary}
	\frac{\p u}{\p x_3}(x) + \rf_k u(x) = 0
\end{equation}
on $\plane := {\R^2 \times \{0\}}$, where $\rf_k=\rf_k(x)$ is an unknown realization of a real-valued random function with a bounded support. We assume that the wave number $k$ is positive and $\rf_k$ is real-valued. Notice that in our model $\rf_k$ depends on $k$.

The classical problem with impedance boundary value condition in the half-space geometry is to understand what kind of
surface waves appear on $\plane$. Related to this, 
the uniqueness of the solution in many cases requires a special
radiation condition \cite{Nedelec_3d, PZ13}. In our case it can be shown that the 
classical Sommerfeld radiation condition 
\begin{equation}
	\label{eq:sommerfeld_radiation}
	\left(\frac{\p}{\p r} -ik\right) u(x) = o(|x|^{-1}), \quad {\rm as} \;\; |x|\to\infty
\end{equation}
and uniformly in the sphere $x/|x| \in \S^2$, guarantees the uniqueness for a real-valued and compactly supported $\rf_k$. 

In the context of acoustics and sound propagation, the parameter $\rf_k$ is typically factorized as $\rf_k = ik\beta$,
where $\beta$ is the acoustic admittance of the surface.
It describes the ratio between the normal fluid velocity and the pressure at the surface. In this work we require $\re(\beta) = 0$ in order to fulfil assumptions on $\rf_k$. In acoustics, the boundary is said to be passive or non-absorbing.

Let $\md$ be an open and bounded set in $\hs$. Our interest lies in the following inverse problem: 
\emph{given the back-scattered field $u(y; y,k)$ for all $y\in\md$ and $k>0$, what information can be recovered regarding $\rf_k$?}
In other words, we take measurements generated by a single realization of $\rf_k$ and
ask what properties the underlying random process had.
The role of randomness in this treatise is to model the complex or chaotic micro-structure of $\rf_k$. We do not focus on recovering $\rf_k$ exactly, but instead work towards determining some statistical properties regarding its probability distribution.
We return to a more detailed formulation of this problem below.

Our work draws inspiration from \cite{LPS}, where inverse scattering was studied for a two-dimensional random Schr\"{o}dinger equation $(\Delta+q+k^2)u = 0$. The potential $q$ was assumed to be a Gaussian random function such that the covariance operator is a classical pseudodifferential operator \cite{Hor3}. The result in \cite{LPS} shows that
the backscattered field, obtained from a single realization of $q$, determines uniquely the principal symbol of the covariance operator of $q$.
The statistical model for $q$ assumes that the potential is locally isotropic and that the smoothness remains unchanged in spatial changes. However, the local variance is allowed to vary. A random field with such properties was called \emph{microlocally isotropic}. This large class of random fields includes stochastic processes like
the Brownian bridge or the Levy Brownian motion in the plane.

In the present treatise we generalize this concept to a class of random fields that are called \emph{microlocally anisotropic}. Similar to \cite{LPS} the covariance operator
is assumed to be a pseudo-differential operator. 
However, the principal symbol is allowed to be direction-dependent. Hence, the correlation of the field is anisotropic while the smoothness is remains unchanged in spatial changes. 

Our main results in Theorems \ref{thm:main_res_iso} and \ref{thm:main_result} relate to the unique recovery of the principal symbol \cite{Hor3} of the covariance of $\rf$. In the isotropic case, the principal symbol can be fully recovered. If the field is anisotropic, a partial recovery is always possible. In particular, if the degrees of freedom can be reduced, e.g., the field is solenoidal, the anisotropic principal symbol can be uniquely determined. Most importantly, no approximations are made and we can study the full non-linear inverse problem. Further, the model of the anisotropic random field leads to the analysis of a novel anisotropic spherical Radon transform and its invertibility.

The forward problem related to \eqref{eq:helmholtz}-\eqref{eq:robin_boundary} has been widely studied in relation to outdoor sound propagation: how to predict the far field behaviour of the sound field emitted from a monofrequency point source located above? The problems related to energy-absorbing boundaries with $\re \beta > 0$ have been studied in detail by Chandler-Wilde (see e.g. \cite{CW97,CW_Peplow05}). The energy propagating at boundary level is maximized when $\re \beta = 0$ and $\im \beta < 0$. In this case, the outgoing surface wave decays slower than volume waves and, consequently, special radiation conditions need to be considered. The speed of surface waves were studied in \cite{Nedelec_3d,Nedelec_3d_2,Nedelec_2d,Karamyan02,Karamyan03,Karamyan04} in detail. We also mention \cite{Odeh63} on uniqueness results when the surface waves do not exist.

The direct problem related to random Robin boundary conditions in half-space for the Helmholtz equation was considered in \cite{Bal11} in the context of homogenization theory. The aim is to find an effective solution to the problem when the oscillations increase.
Often, however, randomness in inverse problems is related to the source \cite{HeLaOk2,Bao14} or the medium \cite{Solna_book}.
Typical approach to scattering from a random medium relies on the multiscale asymptotics of the scattered field. This direction has been studied by Papanicolaou and others in various cases \cite{Bal,Tsogka}. 
Moreover, scattering effects from random boundary were studied in \cite{Borcea13a}.

This paper is organized as follows. In section \ref{sec:statement} we describe the main results in detail. The probabilistic model for the random Robin parameter $\rf_k$
is introduced and motivated in section \ref{sec:random_field}. Despite the forward scattering problem being classical, the problem \eqref{eq:helmholtz}-\eqref{eq:robin_boundary} in half-space geometry is scarcely considered in the literature and we include a rigorous analysis of the situation in section \ref{sec:forward}.
Assuming the Born approximation, we analyse the cross-correlations in the back-scattered data. In section \ref{sec:born} we show an asymptotic formula for
the cross-correlations, while in section 6 ergodicity arguments are used to prove the convergence of the measurement. Finally, in section \ref{sec:recoverability} we describe the recoverability of the anisotropic microlocal strength.

\section{Statement of the result}
\label{sec:statement}

Throughout the paper we identify $\plane = \R^2\times \{0\}$ with $\R^2$. In consequence, whenever
an object is defined on $\R^2$ this should be interpreted as the boundary of the half-space. However, when elements from $\hs$ and the boundary appear simultaneously in a equation we prefer to distinguish the boundary by $\plane$.
First example of this convention is the random Robin parameter $\rf$ supported in $\dom \subset \R^2$. 


We assume the probability space 
$(\Omega, {\mathcal F}, \mathbb{P})$ is complete and that the Robin parameter $\rf$ is a zero-centered generalized Gaussian random field on $\plane$ with a covariance operator $C_\rf : C^\infty_0(\R^2) \to {\mathcal D}'(\R^2)$. 
Furthermore, we assume that $C_\rf$ is a classical pseudodifferential operator.
We return to a rigorous definition of generalized random fields in Section \ref{sec:random_field}.

A few words on notation: throughout the text we denote by $\S^{n-1}$ and $\B^n$ the unit sphere and ball in $\R^n$, respectively. In addition, we write e.g. $\S^{n-1}(R)$ to distinguish a sphere with radius $R$. Moreover, we use convention $z^0 = z/|z| \in \S^1$ for any $z\in \plane$. For two functions $f=f(x)$ and $g=g(x)$ we write $f \propto g$ if there exists a constant $C$ such that $f = Cg$ everywhere. Further, $C$ denotes a generic constant the value of which can change even inside a formula.

We distinguish microlocally isotropic and anisotropic fields by the following definitions.

\begin{definition}
\label{def:iso_rf}
The random field $\rf$ is called \emph{microlocally isotropic} (of order $\epsilon$) in $\dom$, if the principal symbol of $C_\rf$ satisfies
$\s^p(x,\xi) = b(x)|\xi|^{-2\epsilon-2}$,
where $b \in C_0^\infty(\R^2)$ is supported in $\dom$.
\end{definition}
\begin{definition}
\label{def:aniso_rf}
The random field $\rf$ is called \emph{microlocally anisotropic} (of order $\epsilon$) in $\dom$, if the principal symbol of $C_\rf$ satisfies
\begin{equation}
	\label{eq:aniso_prin}
	\s^p(x,\xi) = \frac{b(x,\xi^0)}{|\xi|^{2\epsilon+2}}, \quad  \xi^0 = \frac{\xi}{|\xi|},
\end{equation}
for $\epsilon > 0$ and some bounded function $b \in C^\infty(\R^2\times \S^1)$ supported in $\dom \times \S^1$. 
\end{definition}
The function $b$ appearing in Definitions \ref{def:iso_rf} and \ref{def:aniso_rf} is called \emph{isotropic} and \emph{anisotropic local strength} of $\rf$, respectively. In the following sections we consider the isotropic model as
a special case of anisotropy and hence the notation $b$ always depends on $\xi$.

Let us now record main technical assumptions regarding our model.
In order to establish the main results, the anisotropic local strength $b$ of $\lambda$ is assumed to satisfy following conditions:
\begin{itemize}
	\item[(A1)] $b(x,\xi^0) = b(x,-\xi^0)$ for any $\xi^0 \in \S^1$ and
	\item[(A2)] there exists $\tilde b \in C^\infty(\R^2 \times \R^2)$ and $s>0$ such that $\tilde b(x,\cdot)$ is real-analytic and $s$-homogeneous, i.e., 
	$\tilde b(x,y) = |y|^s b(x,y^0)$ for any $x,y\in \R^2$.
\end{itemize}
The assumption (A2) allows us to use unique continuation of analytic functions in the proof of Theorem \ref{thm:recover}. 

Recall now that the set of measurement points $\md$ is an open and bounded set in $\hs$. We assume that its projection to $\plane$ is disjoint to $\dom$, i.e., 
\begin{itemize}
	\item[(A3)] $\md'=\{(x',0)\in \plane \; | \; (x',x_3)\in \md\}$ satisfies $\md' \cap \dom = \emptyset$.
\end{itemize}
Moreover, the random field $\rf$ is assumed to depend on $k$ according to
\begin{itemize}
	\item[(A4)] $\rf_k(x) = \frac{\rf(x)}{k^p}$ for $k\geq 1$, $p>\epsilon+\frac 12$
	and $\rf$ is a microlocally anistropic random field.
\end{itemize}
The assumption (A3) is technical in nature and is required in section \ref{sec:born} (see remark \ref{re:rho_wellposed}) and in the proof of Theorem \ref{thm:recover}. The assumption (A4) is related to the convergence speed of the Born series and is discussed in remark \ref{re:freq_dependency} in more detail.

In the scattering problem the wave $u$ is decomposed as
\begin{equation*}
	u(x; y,k) = u_{in}(x; y,k) + u_s(x; y,k),
\end{equation*}
where $u_s$ is the scattered field and
\begin{equation}
	u_{in}(x;y,k) = g_k(x-y) + g_k(\tilde x-y).
\end{equation}
Above, we have $\tilde x = (x_1, x_2, -x_3)$ and $g_k$ stands for
$$g_k(x) = \frac {\exp(ik|x|)}{4\pi|x|}.$$ 
We point out that 
$\left.\frac{\p}{\p x_3} g_k(x)\right|_{x_3=0}=0$ for any $x \in \plane$ and, consequently, $\left.\frac{\p}{\p x_3} u_{in}(\cdot;y,k)\right|_{x_3=0} = 0$ for any $y\in \hs$. Thus, $u_{in}$ is the solution to the Helmholtz problem $\eqref{eq:helmholtz}$ and $\eqref{eq:sommerfeld_radiation}$ in half-space geometry with zero Neumann boundary condition.

We are ready describe our measurement data.

\begin{definition}
\label{def:measurement}
Given $\omega \in \Omega$ and $x,y \in \md$, the measurement $m(x,y,\omega)$ is the pointwise limit
\begin{equation}
	\label{eq:measurement}
	m(x,y,\omega) = \lim_{K\to \infty} \frac{1}{K-1} \int_1^K k^{2(1+\epsilon+p)}|u_s(x; y,k,\omega)|^2 dk.
\end{equation}
We call $m(y,y,\omega)$ the backscattering measurement.
\end{definition}
The well-posedness of the direct problem is shown by the following result.
\begin{theorem}
\label{thm:data}
Let $\dom \subset \plane$ be a bounded simply connected domain and $U\subset \hs$ the measurement set satisfying assumption (A3). Moreover, let $\rf$ be a microlocally anisotropic Gaussian random field of order $\epsilon>0$ in $\dom$ such that assumptions (A1), (A2) and (A4) are satisfied. Then the following holds:
\begin{itemize}
\item[(i)] For any $x,y \in \md$ the measurement $m(x,y,\omega)$ is well-defined (the limit in \eqref{eq:measurement} exists almost surely), if
the line segment $L_{x,y} = \{tx+(1-t)y\; | \; t\in [0,1]\}$ lies in the exterior of $\dom$, i.e., $L_{x,y} \subset \plane \setminus \overline{\dom}$.
\item[(ii)] There exists a continuous deterministic function $m_0(x,y)$ such that for any
$x,y\in \md$ the equality $m(x,y,\omega) = m_0(x,y)$ holds almost surely. In particular, the function $n_0(x) := m_0(x,x)$ is almost surely determined by the backscattering data $\{m(x,x,\omega) \; : \; x \in \md\}$.
\item[(iii)] The backscattering data i.e. $n_0(x)$, $x = (x',x_3) \in \md$ is determined by the micro-correlation strength $b$ through the relation
\begin{equation}
	n_0(x) = \frac{1}{2^{8+2\epsilon}\pi^2} \int_{\dom} \frac{b(z,(z-x')^0)}{|z-x|^2} dz.
\end{equation} 
\end{itemize}
\end{theorem}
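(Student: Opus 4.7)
The plan is to reduce the scattered field to its leading Born term, evaluate the expected rescaled intensity via a microlocal stationary-phase analysis, and then promote the expectation-level identity to an almost sure one using a decorrelation estimate in the wavenumber $k$; the explicit formula in (iii) will emerge from evaluating the principal symbol of $C_\rf$ at the covector conjugate to the backscattering direction. Using Green's identity in $\hs$ with the Neumann Green's function $G_N(x,z;k)=g_k(x-z)+g_k(\tilde x -z)$, which restricts to $2g_k(x-z)$ on $\plane$, the scattered field obeys the integral equation $u_s(x)=2\int_\plane g_k(x-z)\rf_k(z)(u_{in}+u_s)(z)\,dz$. Assumption (A4), $\rf_k=\rf/k^p$ with $p>\epsilon+\tfrac12$, makes the Born series convergent, and the first iterate
\[
u_s^{(1)}(x;y,k)=\frac{1}{4\pi^2 k^p}\int_\plane \frac{e^{ik(|x-z|+|z-y|)}}{|x-z||z-y|}\,\rf(z)\,dz
\]
dominates; the higher iterates contribute a remainder of strictly lower order in $k$ that vanishes after the $k^{2(1+\epsilon+p)}$-prescaling and Cesaro averaging.

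Taking the expectation of $k^{2(1+\epsilon+p)}|u_s^{(1)}|^2$ produces the double integral
\[
\frac{k^{2(1+\epsilon)}}{16\pi^4}\int_\dom\int_\dom \frac{e^{ik\Phi_{x,y}(z,z')}\,C_\rf(z,z')}{|x-z||z-y||x-z'||z'-y|}\,dz\,dz',
\]
with $\Phi_{x,y}(z,z')=|x-z|+|z-y|-|x-z'|-|z'-y|$. Substituting $z'=z+h$ and inserting the Fourier representation $C_\rf(z,z+h)=(2\pi)^{-2}\int e^{-ih\cdot\xi}\s(z,\xi)\,d\xi$, then rescaling $\xi=k\eta$ and using the homogeneity $\s^p(z,k\eta)=k^{-(2\epsilon+2)}\s^p(z,\eta)$, converts the inner integral into a four-dimensional oscillatory integral in $(h,\eta)$ with phase $\Psi_z(h,\eta)=\Phi_{x,y}(z,z+h)-h\cdot\eta$ and large parameter $k$. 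The unique non-degenerate critical point sits at $h=0$, $\eta_\ast=\nabla_h\Phi_{x,y}(z,z+h)|_{h=0}$; for backscattering $x=y$ this is proportional to $(x'-z)/|x-z|$, and assumption (A1) removes the sign in the argument of $b$. Tracking the $k$-powers---$k^{2+2\epsilon}$ from the prescaling, $k^{-(2\epsilon+2)}$ from symbol homogeneity, $k^{-2}$ from four-dimensional stationary phase, and $k^2$ from $d\xi=k^2\,d\eta$---all powers of $k$ cancel at leading order and one obtains the $k$-independent formula of (iii). The disjointness condition (A3) and the segment hypothesis in (i) guarantee that $\Phi_{x,y}$ has no other stationary points in $\dom\times\dom$, so no spurious contribution arises.

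For the almost-sure statements in (i) and (ii), I would estimate the covariance of $|u_s^{(1)}(x;y,k)|^2$ and $|u_s^{(1)}(x;y,k')|^2$ at distinct wavenumbers using Isserlis' Gaussian fourth-moment identity, which decomposes it into products of two-point correlations; each resulting piece is an oscillatory integral whose phase, thanks to (A3) and the segment condition, has no stationary point in $\dom\times\dom$, so integration by parts in $(z,z')$ yields decay of order $|k-k'|^{-N}$ for any $N$. Combined with Fubini, this gives $\mathrm{Var}[(K-1)^{-1}\int_1^K k^{2(1+\epsilon+p)}|u_s^{(1)}|^2\,dk]=O(K^{-1})$, whence the Cesaro mean converges in $L^2(\Omega)$ to its expectation. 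Borel--Cantelli along a geometric subsequence of $K$, together with a monotonicity bound on the tail increment, upgrades this to pointwise almost sure convergence for each fixed pair $(x,y)\in\md^2$, and the same estimates applied uniformly in $(x,y)$ deliver the continuity of the deterministic limit $m_0(x,y)$ asserted in (ii).

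The main obstacle is the rigorous pseudodifferential calculus underlying the stationary-phase step: one must replace the full symbol $\s$ by its principal part $\s^p$ with a remainder whose contribution is $O(k^{-1})$ times the leading order uniformly in $z\in\dom$, and the singularity of $\s^p$ at $\xi=0$ must be handled by a low-frequency cutoff whose error is absorbed. A secondary difficulty is making the decorrelation estimate uniform in $(x,y)\in\md^2$ so that the limit is inherited jointly on $\md$ rather than only at fixed measurement pairs; this requires the amplitudes $(|x-z||z-y|)^{-1}$ to be bounded away from singularities, which is precisely what (A3) ensures.
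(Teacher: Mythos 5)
Your plan follows the same global architecture as the paper — reduce to the first Born term, compute the expected rescaled intensity asymptotically in $k$, establish decorrelation in $k$, and promote the expectation to an almost sure limit by ergodic averaging — but the microlocal engine you propose is genuinely different. Where you substitute $z'=z+h$, insert the symbol representation of $c_\rf$, rescale $\xi=k\eta$, and apply a four-dimensional stationary-phase argument in $(h,\eta)$, the paper instead introduces the coordinate change $\tau_{(x,y)}$ (parametrizing by confocal ellipses) that makes the phase exactly linear, $k_1\phi_1-k_2\phi_2 = (k_1+k_2)v_1+(k_1-k_2)w_1$, and then invokes the transformation theory of conormal distributions (\cite[Thm.~18.2.9]{Hor3}) to push the symbol through. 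The payoff of the paper's route is that the answer reads off as a Fourier transform of the transformed symbol $\tilde\s$ at a single point, with the decorrelation estimate \eqref{eq:k1k2A} in $|k_1-k_2|$ coming automatically from the standard symbol estimates in the $w$ variable; your route requires a separate non-stationary-phase argument for decorrelation, which you recognize. For the almost-sure statements in (i)--(ii), you propose to redo the ergodic step by hand — variance bound, $L^2$ convergence of the Ces\`aro average, Borel--Cantelli along a geometric subsequence — whereas the paper simply cites Cram\'er's ergodic theorem (Theorem~\ref{thm:ergodic}) together with the decorrelation estimate in Corollary~\ref{cor:estimate}; these are equivalent in spirit, yours being a proof of the tool the paper imports.

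The one concrete gap is in the derivation of the explicit integrand in (iii). Your power counting ($k^{2+2\epsilon}$, $k^{-2\epsilon-2}$, $k^{-2}$ from stationary phase, $k^2$ from $d\xi=k^2\,d\eta$) correctly shows the $k$-powers cancel, but it does not determine the weight in the $z$-integral. At the critical point $h=0$, $\eta_\ast=\nabla_h\Phi_{x,y}(z,z+h)|_{h=0}$, the amplitude carries the factor $\s^p(z,k\eta_\ast)\propto b(z,\eta_\ast^0)\,(k|\eta_\ast|)^{-2-2\epsilon}$, and $|\eta_\ast|$ is not $1$: for backscattering, $\eta_\ast = 2(x'-z)/|x-z|$, so $|\eta_\ast| = 2|x'-z|/|x-z|$, which varies with $z$ and $x_3$. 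This factor $|\eta_\ast|^{-2-2\epsilon}$ is precisely what the paper encodes as $|\kappa_{11}^{-\top}e_1|^{2+2\epsilon}$ in Theorem~\ref{thm:asymptotics}, and it is not captured by your sketch when you write that ``one obtains the $k$-independent formula of (iii).'' Any rigorous implementation of your stationary-phase approach must track this factor (along with the Hessian determinant and signature, which here conveniently come out to $1$ and $0$ respectively), because it modifies the $z$-dependence of the kernel appearing in the spherical Radon transform and hence feeds into the recoverability argument in Section~\ref{sec:recoverability}.
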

As an intermediate step, we prove in Theorem \ref{thm:recover} that
the backscattering data $n_0(x)$, $x = (x',x_3) \in \md$ determines the integrals
\begin{equation*}
	(\Rt b)(x',r) = \int_{\S^1} b(x'+r\theta,\theta) d|\theta|
\end{equation*}
for all $x'\in\plane$ and $r>0$.
We call $\Rt$ an anisotropic spherical Radon transform. To the knowledge of the authors, the invertibility of $\Rt$ has not been studied in literature.

Notice that if $\rf$ is isotropic, $\Rt$ reduces to the standard spherical Radon transform. In this case, the question of invertibility is classical (see e.g., \cite{AQ96} and the references therein) and follows directly. Hence we can formulate our main result on the isotropic model.
\begin{theorem}
\label{thm:main_res_iso}
Let $\dom\subset \plane$ and $U\subset \hs$ be as in Theorem \ref{thm:data}.
Moreover, let $\rf$ be microlocally isotropic of order $\epsilon>0$ in $\dom$.
Then the backscattering data $n_0(x)$, $x\in \md$, uniquely determines $b=b(x)$ everywhere.
\end{theorem}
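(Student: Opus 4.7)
The plan is to isolate the isotropic case from the general anisotropic framework and reduce the claim to the (essentially classical) injectivity of the circular mean transform.

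The first step is to invoke the intermediate recovery result (Theorem \ref{thm:recover}), which the paper establishes along the way to Theorem \ref{thm:data}: the backscattering data $\{n_0(x) : x \in \md\}$ uniquely determine the integrals
$$(\Rt b)(x', r) = \int_{\S^1} b(x' + r\theta, \theta) \, d|\theta|$$
for every $x' \in \R^2$ and $r > 0$. The passage from the measurement centres in $\md'$ to all of $\R^2$---which relies on analytic continuation via assumption (A2)---has already been carried out there, so I would use this result as a black box.

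Next I would specialise to the isotropic setting of Definition \ref{def:iso_rf}, in which $b(z,\theta)$ is independent of the direction variable. Writing $b(z,\theta) = b(z)$, the transform $\Rt$ collapses to $2\pi$ times the classical circular mean transform,
$$(\Rt b)(x', r) = \int_{\S^1} b(x' + r\theta)\, d|\theta| = 2\pi\, M_r b(x'),$$
where $M_r b(x')$ denotes the mean value of $b$ on the circle of radius $r$ centred at $x'$.

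The final step is pointwise recovery of $b$. Since $b \in C_0^\infty(\R^2)$ and we now know $M_r b(x')$ for every $x' \in \R^2$ and every $r > 0$, continuity of $b$ yields $M_r b(x') \to b(x')$ as $r \to 0^+$, which already determines $b$ everywhere. Alternatively, classical injectivity results for the spherical Radon transform with restricted centres, as in \cite{AQ96}, apply if one prefers to avoid the limit. I do not anticipate any essential obstacle here: the genuinely analytic work has been absorbed by Theorems \ref{thm:data} and \ref{thm:recover}, and the residual content of Theorem \ref{thm:main_res_iso} is precisely the observation that $\Rt$ reduces to the standard spherical means in the isotropic case.
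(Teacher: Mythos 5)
Your proposal is correct and follows the same route as the paper: invoke Theorem \ref{thm:recover} to obtain $(\Rt b)(x',r)$ for all $x'\in\R^2$ and $r>0$, note that in the isotropic case $\Rt$ collapses to (a multiple of) the classical circular mean transform, and conclude by its injectivity, for which the paper simply cites \cite{AQ96}. Your observation that one can bypass the general inversion theory and recover $b(x')$ pointwise via $M_r b(x') \to b(x')$ as $r\to 0^+$ (using continuity of $b$) is a cleaner, more elementary finish than the paper's citation, but it is a cosmetic rather than structural difference.
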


Regarding the anisotropic problem, we explicitly describe the null-space
of $\Rt$ in Theorem \ref{thm:nullspace}. This yields us the following result on the invertibility of $S$.

\begin{theorem}
\label{thm:invert_S}
Under the assumptions in Theorem \ref{thm:data},
the backscattering data $n_0(x)$, $x\in \md$ uniquely determines values
\begin{equation*}
	(\Ft b)(\xi, \xi^0) \quad {\rm and} \quad (\Ft b)(\xi,(\xi^0)^\bot),
\end{equation*}
for all $\xi \in \plane$, where $\Ft = \Ft_{x\to\xi}$ is the Fourier transform and $\xi^\bot = (\xi_2,-\xi_1).$
\end{theorem}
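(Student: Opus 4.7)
The plan is to invoke Theorem \ref{thm:recover} in order to reduce the claim to inverting the anisotropic spherical Radon transform $\Rt$, and then to carry out a Fourier--Bessel decomposition on the unit circle. First, I would take the Fourier transform of $(\Rt b)(x',r)$ in the boundary variable $x'$. Because $b(\cdot,\theta)\in C_0^\infty(\dom)$ uniformly in $\theta$, $\Rt b$ is smooth and rapidly decaying in $x'$, and a change of variable $y = x' + r\theta$ yields
\begin{equation*}
	\Ft_{x'\to\xi}(\Rt b)(\xi,r) \;=\; \int_{\S^1} e^{ir\xi\cdot\theta}\,(\Ft b)(\xi,\theta)\, d|\theta|,
\end{equation*}
which is therefore known from the data for every $\xi\neq 0$ and $r>0$.

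Next, I would fix $\xi\neq 0$ and parameterize $\theta(\alpha) = \cos\alpha\,\xi^0 + \sin\alpha\,(\xi^0)^\bot$, so that $\xi\cdot\theta(\alpha) = |\xi|\cos\alpha$. Setting $g_\xi(\alpha) := (\Ft b)(\xi,\theta(\alpha))$, assumption (A1) forces $g_\xi(\alpha+\pi)=g_\xi(\alpha)$, so only even harmonics appear in the Fourier series
\begin{equation*}
	g_\xi(\alpha) \;=\; \sum_{n\in\Z} c_n(\xi)\, e^{2in\alpha},
\end{equation*}
with rapid decay of $c_n(\xi)$ in $n$ by smoothness of $b$. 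Applying the Jacobi--Anger identity $\int_0^{2\pi} e^{iz\cos\alpha} e^{2in\alpha}\, d\alpha = 2\pi(-1)^n J_{2n}(z)$ together with $J_{-2n}=J_{2n}$ then converts the representation into
\begin{equation*}
	\Ft_{x'\to\xi}(\Rt b)(\xi,r) \;=\; 2\pi\left[c_0(\xi)J_0(r|\xi|) + \sum_{n\geq 1}(-1)^n d_n(\xi)\, J_{2n}(r|\xi|)\right],
\end{equation*}
where $d_n(\xi):=c_n(\xi)+c_{-n}(\xi)$.

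The key step is to invert this Bessel expansion. Since $J_{2n}(z) = (z/2)^{2n}/(2n)! + O(z^{2n+2})$ as $z\to 0$ and the series above converges uniformly on compact sets in $r$ (by rapid decay of $d_n(\xi)$ and the uniform bound $|J_{2n}(t)|\leq 1$ on $\R$), the right-hand side is entire in $r$ and matching Taylor coefficients at $r=0$ gives a lower-triangular linear system that successively determines $c_0(\xi)$ and each $d_n(\xi)$, $n\geq 1$. Finally, evaluating the Fourier series of $g_\xi$ at $\alpha=0$ and $\alpha=\pi/2$ produces
\begin{equation*}
	(\Ft b)(\xi,\xi^0) = c_0(\xi) + \sum_{n\geq 1} d_n(\xi), \qquad (\Ft b)(\xi,(\xi^0)^\bot) = c_0(\xi) + \sum_{n\geq 1}(-1)^n d_n(\xi),
\end{equation*}
both of which are expressed purely through the recovered quantities. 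I expect the only real technical nuisance to be justifying the term-by-term manipulations (exchanging sum with integral, identifying Taylor coefficients), but both are routine consequences of the Schwartz decay of $c_n(\xi)$ in $n$ coming from $b\in C^\infty$.
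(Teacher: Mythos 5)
Your proof is correct, and it takes a genuinely different route from the paper's. The paper first proves Theorem \ref{thm:nullspace}, characterizing the null space of $\Rt$ via the observation that the $r$-Taylor coefficients of $\int_0^{2\pi}e^{ir\cos\theta}g(\theta)\,d\theta$ are the even moments $\int \cos^{2j}\theta\,g(\theta)\,d\theta$, forcing the antisymmetry $g(\tfrac{\pi}{2}-\theta) = -g(\tfrac{\pi}{2}+\theta)$ on null-space elements; it then argues that since null-space functions vanish at $\theta\in\{0,\tfrac{\pi}{2}\}$ (i.e.\ at $\xi^0$ and $(\xi^0)^\bot$), the known equivalence class $f+\mathcal{N}(\Rt)$ determines $(\Ft f)$ at those two directions. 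You instead expand $g_\xi$ in a Fourier series on $\S^1$, apply the Jacobi--Anger identity to convert $\Ft_{x'\to\xi}(\Rt b)(\xi,\cdot)$ into a Bessel series in $r$, and use the lower-triangular structure of the Taylor expansions of $J_{2n}$ at $r=0$ to recover $c_0(\xi)$ and each symmetrized coefficient $d_n(\xi)=c_n(\xi)+c_{-n}(\xi)$. The two arguments are closely related in spirit --- both hinge on entirety in $r$ and matching low-order data at $r=0$ --- but yours is constructive: it yields an explicit reconstruction of $c_0$ and the $d_n$ (from which the two stated values follow by summation), whereas the paper's argument establishes recoverability abstractly via a ``known modulo null space'' argument, which is a bit less explicit on how the two specific values are actually extracted. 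The one thing your route forfeits is that the paper's proof simultaneously delivers the full null-space characterization (Theorem \ref{thm:nullspace}), which is of independent interest; your version specializes to exactly what Theorem \ref{thm:invert_S} asserts. Otherwise, every step checks out: the $\pi$-periodicity from (A1), the Jacobi--Anger computation $\int_0^{2\pi}e^{iz\cos\alpha}e^{2in\alpha}\,d\alpha = 2\pi(-1)^nJ_{2n}(z)$ with $J_{-2n}=J_{2n}$, the uniform convergence and entirety that justify term-by-term Taylor matching, and the final evaluation at $\alpha=0,\tfrac{\pi}{2}$ in terms of $c_0$ and the $d_n$ alone are all correct.
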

Our interest lies in the case when the anisotropic local strength is of quadratic form
\begin{equation}
	\label{eq:aniso_is_quad}
	b(x, \xi^0) = \langle \xi^0, A(x)\xi^0\cet,
\end{equation}
where the matrix field $A : \R^2 \to \R^{2\times 2}$ is 
\begin{itemize}
	\item[(A5)]smooth and symmetric, has uniformly bounded eigenvalues and satisfies ${\rm supp}(A) \subset D$. 
\end{itemize}
The main result regarding the quadratic model is then as follows.
\begin{theorem}
\label{thm:main_result}
Let the assumptions in Theorem \ref{thm:data} hold. In addition, 
we assume that the local strength of $\lambda$ is of the form \eqref{eq:aniso_is_quad}, where $A : \R^2 \to \R^{2\times 2}$ satisfies (A5). 
Given the backscattering data $n_0(x)$, $x\in \md$, the trace ${\rm tr}(A)$ can be uniquely determined everywhere.
Moreover, suppose that one of three coefficient functions $a_{j}$, $1\leq j\leq 3$ from 
\begin{equation*}
	A(x) =
	\begin{pmatrix}
		a_1(x) & a_3(x) \\
		a_3(x) & a_2(x)
	\end{pmatrix}
\end{equation*}
is known,
then the backscattering data $n_0(x)$, $x\in\md$, uniquely determines the other two everywhere. 
\end{theorem}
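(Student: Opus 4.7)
\medskip\noindent\textbf{Proof proposal.}
The plan is to invoke Theorem~\ref{thm:invert_S} and reduce recovery of the entries of $A$ to an elementary linear system in the Fourier domain. First I would check consistency of hypotheses for the quadratic model: (A1) is immediate from $\bra -\xi^0,A(-\xi^0)\cet=\bra\xi^0,A\xi^0\cet$, and (A2) holds with $s=2$ and $\tilde b(x,y)=\bra y,A(x)y\cet$, which is a polynomial in $y$ and hence real-analytic. Thus Theorem~\ref{thm:invert_S} applies and furnishes $(\Ft b)(\xi,\xi^0)$ and $(\Ft b)(\xi,(\xi^0)^\bot)$ for every $\xi\in\R^2$.

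Next, for $\xi\in\R^2\setminus\{0\}$, write $\xi^0=(\cos\phi,\sin\phi)$ so that $(\xi^0)^\bot=(\sin\phi,-\cos\phi)$. Expanding the quadratic form gives
\begin{align*}
b(x,\xi^0)&=a_1(x)\cos^2\phi+2a_3(x)\cos\phi\sin\phi+a_2(x)\sin^2\phi,\\
b(x,(\xi^0)^\bot)&=a_1(x)\sin^2\phi-2a_3(x)\cos\phi\sin\phi+a_2(x)\cos^2\phi.
\end{align*}
Applying $\Ft_{x\to\xi}$ and summing yields $(\Ft b)(\xi,\xi^0)+(\Ft b)(\xi,(\xi^0)^\bot)=\hat a_1(\xi)+\hat a_2(\xi)=\Ft(\mathrm{tr}\,A)(\xi)$, while subtracting yields $(\Ft b)(\xi,\xi^0)-(\Ft b)(\xi,(\xi^0)^\bot)=(\hat a_1-\hat a_2)(\xi)\cos(2\phi)+2\hat a_3(\xi)\sin(2\phi)$. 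Both left-hand sides are given by the data, so the sum relation determines $\Ft(\mathrm{tr}\,A)$ on all of $\R^2$ and Fourier inversion returns $\mathrm{tr}(A)$ pointwise.

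For the second claim there are two scalar equations in three Fourier unknowns $\hat a_1,\hat a_2,\hat a_3$, so knowledge of a single $a_j$ closes the system pointwise in $\xi$. If $a_1$ (equivalently $a_2$) is known, the sum gives the other diagonal entry and the difference equation solves for $\hat a_3(\xi)$ wherever $\sin(2\phi(\xi))\neq 0$. If instead $a_3$ is known, the difference equation yields $\hat a_1(\xi)-\hat a_2(\xi)$ wherever $\cos(2\phi(\xi))\neq 0$, and combining with the sum produces $\hat a_1$ and $\hat a_2$ separately.

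The only remaining hurdle is the degenerate direction set, a union of four lines through the origin where the $2\times 2$ subsystem just used loses rank. By (A5), each $a_j\in C_0^\infty(\R^2)$, so by the Paley--Wiener theorem $\hat a_j$ extends to an entire function on $\C^2$ and is in particular real-analytic on $\R^2$; its values on the complement of the degenerate lines therefore determine it everywhere, and Fourier inversion yields $a_j$ pointwise. The main obstacle of the theorem is thus not this final step but Theorem~\ref{thm:invert_S} itself---once the two directional Fourier slices of $b$ are granted, the quadratic reduction is linear algebra combined with Paley--Wiener.
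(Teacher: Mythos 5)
Your proof is correct and follows essentially the same route as the paper: apply Theorem \ref{thm:invert_S} to get the two directional Fourier slices of $b$, expand the quadratic form in polar coordinates to obtain the sum $\hat a_1+\hat a_2$ and the difference $(\hat a_1-\hat a_2)\cos 2\phi+2\hat a_3\sin 2\phi$, and then handle the degenerate directions $\{\cos 2\phi=0\}\cup\{\sin 2\phi=0\}$ by analyticity of the Fourier transforms of the compactly supported smooth entries (Paley--Wiener). Your additional verification that the quadratic model satisfies (A1)--(A2) with $s=2$ and $\tilde b(x,y)=\langle y,A(x)y\rangle$ is a useful sanity check that the paper omits.
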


\section{Properties of the random field}
\label{sec:random_field}

\subsection{Smoothness of the realizations}

We begin by defining a generalized Gaussian field. Let $\rf$ be a measurable map from the probability space $\Omega$ to the space of real-valued ${\mathcal D}'(\R^2)$ such that for all $\phi_1, ..., \phi_m \in C^\infty_0(\R^2)$ the
mapping $\Omega \ni \omega \mapsto (\bra \rf(\omega), \phi_j\cet)_{j=1}^m$ is
a Gaussian random variable. The distribution of $\rf$ is
determined by the expectation $\expec \rf$ and the covariance operator
$C_\rf : C^\infty_0(\R^2) \to {\mathcal D}'(\R^2)$ defined by
\begin{equation}
	\label{eq:defcov}
	\bra \psi_1, C_\rf \psi_2 \cet = 
	\expec (\bra \rf - \expec \rf, \psi_1 \cet \bra \rf - \expec \rf, \psi_2 \cet).
\end{equation}
Let $c_\rf(x,y)$ be the Schwartz kernel of the covariance operator $C_\rf$.
We call $c_\rf(x,y)$ the covariance function of $\rf$. Then, in the sense of generalized
functions, \eqref{eq:defcov} reads as
\begin{equation}
	c_\rf(x,y) = \expec ((\rf(x) - \expec \rf(x))(\rf(y) - \expec \rf(y)).
\end{equation}

\begin{proposition}
Let $\epsilon >0$ and random field $\rf$ as in Definition \ref{def:aniso_rf}. We have $\rf \in C^\alpha(\R^2)$ almost surely for all $\alpha \in (0,\epsilon)$.
\end{proposition}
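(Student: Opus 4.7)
The plan is to apply the Kolmogorov--Chentsov continuity theorem. Since $\rf$ is a centered Gaussian field, everything reduces to a variance estimate of the form $\expec|\rf(x)-\rf(y)|^2 \leq C|x-y|^{2\epsilon}$, uniform on compact sets: Gaussianity upgrades this to $\expec|\rf(x)-\rf(y)|^{2p} \leq C_p|x-y|^{2p\epsilon}$ for every integer $p\geq 1$, and Kolmogorov--Chentsov in $\R^2$ then produces a modification which is locally $\alpha$-H\"older for every $\alpha<\epsilon-1/p$; letting $p\to\infty$ covers all $\alpha\in(0,\epsilon)$.

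The substantive step is the diagonal behavior of $c_\rf(x,y)=\expec(\rf(x)\rf(y))$. Since $C_\rf$ is a classical pseudodifferential operator of order $-2\epsilon-2<-2$, its kernel is continuous and, modulo a $C^\infty$ remainder, has the oscillatory representation $c_\rf(x,y)=(2\pi)^{-2}\int \s(x,\xi)e^{i(x-y)\cdot\xi}\,d\xi$ with $\s\sim\s^p+\s_{-2\epsilon-3}+\cdots$. Using the symmetry $c_\rf(x,y)=c_\rf(y,x)$ of the covariance and setting $h=x-y$, one gets
\begin{equation*}
\expec|\rf(x)-\rf(y)|^2 = c_\rf(x,x)-2c_\rf(x,y)+c_\rf(y,y) = (2\pi)^{-2}\int 2(1-\cos(h\cdot\xi))\,\s^p(x,\xi)\,d\xi + R(x,h),
\end{equation*}
where $R$ collects contributions of lower-order symbols and the Taylor expansion of $\s$ in its first argument. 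Substituting $\eta=|h|\xi$ and using the homogeneity $\s^p(x,\eta/|h|)=|h|^{2\epsilon+2}b(x,\eta^0)|\eta|^{-2\epsilon-2}$ pulls out a factor $|h|^{2\epsilon}$, and the residual integral
\begin{equation*}
\int_{\R^2}2(1-\cos(\hat h\cdot\eta))\frac{b(x,\eta^0)}{|\eta|^{2\epsilon+2}}d\eta
\end{equation*}
converges uniformly in $x\in\overline\dom$ and $\hat h = h/|h|\in\S^1$ for $\epsilon\in(0,1)$: near $\eta=0$ the integrand is $O(|\eta|^{-2\epsilon})$ (integrable in $\R^2$) and at infinity it is $O(|\eta|^{-2\epsilon-2})$. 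Symbols of order $\leq -2\epsilon-3$ in $\s$ give kernel increments that are $O(|h|)$ in a way controllable by $|h|^{2\epsilon}$ on bounded domains, giving the desired bound.

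Combined with Gaussianity and Kolmogorov--Chentsov, this delivers $\rf\in C^\alpha_{\mathrm{loc}}(\R^2)$ almost surely for every $\alpha<\epsilon$; the compact support of $b$ in $x$ confines $\rf$ to $\overline\dom$ almost surely, so local H\"older continuity upgrades to $\rf\in C^\alpha(\R^2)$. The main obstacle is the diagonal estimate above, and in particular tracking that the lower-order symbols in the classical expansion of $\s$ and the Taylor remainder in $x$ produce no contribution larger than $O(|h|^{2\epsilon})$; this is a standard but careful application of pseudodifferential kernel asymptotics. For $\epsilon\geq 1$ the field is one-derivative smoother and the same strategy, applied to higher-order finite differences of $c_\rf$, yields the analogous conclusion.
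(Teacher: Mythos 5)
Your proposal is correct, but it takes a genuinely different route from the paper. The paper's proof is an operator-theoretic regularization argument: it introduces an auxiliary isotropic Gaussian field $Y$ with covariance $(I-\Delta)^{-1-\epsilon}$ (whose $H^{\delta,p}_{loc}$ regularity for $\delta<\epsilon$ is imported from \cite[Thm.~2]{LPS}), observes that $\tilde\lambda=\lambda+Y$ has a uniformly elliptic covariance so that $C^{1/2}_{\tilde\lambda}$ exists with symbol in $S^{-1-\epsilon}$, writes $\tilde\lambda=C^{1/2}_{\tilde\lambda}C_Y^{-1/2}Y$ in distribution to transfer the Sobolev regularity of $Y$ to $\tilde\lambda$ via pseudodifferential mapping properties, and finally subtracts $Y$ and applies Sobolev embedding. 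Your proof is the direct probabilistic one: estimate the structure function $\expec|\rf(x)-\rf(y)|^2\lesssim|x-y|^{2\epsilon}$ from the oscillatory-integral kernel and the homogeneity of $\s^p$, then invoke Gaussianity and Kolmogorov--Chentsov. Both arguments establish the claim. Your route is more elementary and self-contained (it avoids the LPS reference, the ellipticity trick, and square roots of pseudodifferential operators), but it carries more technical weight that is only gestured at: the principal symbol is the leading homogeneous part only for $|\xi|$ large, so a cutoff near $\xi=0$ is needed (the small-frequency contribution is $C^\infty$, so this is benign but must be said); the transition from the generalized random field $\rf\in{\mathcal D}'(\R^2)$ to a pointwise process $(\rf(x))_x$ on which Kolmogorov--Chentsov can act needs a word, using the continuity of $c_\rf$; and the direct variance estimate works cleanly only for $\epsilon\in(0,1)$ --- the reduction of $\epsilon\ge 1$ to higher-order differences is plausible but would need to be carried out, whereas the paper's Sobolev argument handles all $\epsilon>0$ uniformly and even yields the stronger intermediate conclusion $\rf\in H^{\delta,p}_{loc}$ for all $\delta<\epsilon$, $1<p<\infty$.
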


\begin{proof}
Consider a generalized Gaussian random field $Y$ on $\R^2$ with expectation $\expec Y = 0$ and a covariance operator $C_Y = (I-\Delta)^{-1-\epsilon}$. Clearly, the symbol of $C_Y$ satisfies $\sigma(C_Y) = (1+|\xi|^2)^{-1-\epsilon}$, and we have that the random field $\tilde Y = \phi Y$ for any $\phi \in C^\infty_0(\R^2)$ is microlocally isotropic of order $\epsilon$ in the sense of definition \ref{def:iso_rf}. Due to \cite[Thm. 2]{LPS} we have $Y \in H^{\delta,p}_{loc}(\R^2)$ almost surely for all $\delta < \epsilon$ and $1<p<\infty$.

Let us then define a new generalized random field by $\tilde \lambda = \lambda + Y$. It follows that $C_{\tilde \rf}$ is an uniformly elliptic pseudodifferential operator and we can define a square root $C^{1/2}_{\tilde \rf}$ such that $\sigma(C^{1/2}_{\tilde \rf}) \in S^{-1-\epsilon}(\R^2\times \R^2)$ \cite{Taylor}.
Moreover, it is straightforward to see that $\tilde \rf = C^{1/2}_{\tilde \rf} C_Y^{-1/2}Y$ in (probability) distribution and, consequently, $\tilde \rf \in H^{\delta,p}_{loc}(\R^2)$ almost surely for any $\delta< \epsilon$ due to \cite[Prop. 13.6.5]{Taylor3}.
We conclude that $\rf = \tilde \rf - Y \in H^{\delta,p}(\R^2)$ almost surely and the result follows by the Sobolev embedding theorem.
\end{proof}

\subsection{Examples of microlocal anisotropy}

Microlocally isotropic fields were illustrated by examples of fractional Brownian and Markov fields in \cite{LPS}. Inspired by these we present two interesting cases that are motivated by other inverse problem research. As the second example, we present a problem which does not fully satisfy our assumptions (more precisely (A4)) and can be only partially answered by our analysis. However, the case is highly interesting since the anisotropy is generated by an unknown diffeomorphism. This is often the case in geometrical inverse problems.

\subsubsection{Gaussian potential field}

Define a Gaussian random process $Y$ on $\R^2$ by $\expec Y = 0$ and
\begin{equation*}
	c_Y(z_1,z_2) = |z_1 - z_2|^{2+\epsilon} + r(z_1,z_2)
\end{equation*}
for $\epsilon>0$, where $r \in C^\infty(\R^2\times \R^2)$. We have then $\sigma(C_Y) \in S^{-4-\epsilon}_{1,0}(\R^2\times \R^2)$ and for the principal symbol $\sigma_p(C_Y) = |\xi|^{-4-\epsilon}$, $\xi>1$. Let us now define
\begin{equation*}
	q = D_v Y := \left(v(x) \cdot \nabla\right) Y
\end{equation*}
for a vector field $v \in C^\infty_0(\R^2, \R^2)$ such that ${\rm supp}(v) \subset D$.
The random field $q$ inherits a covariance $C_q = D_v^* C_Y D_v : {\mathcal D}(\R^2) \to {\mathcal D}'(\R^2)$. Since $\sigma(D_v) = v(x) \cdot \xi \in S^1_{1,0}(\R^2\times \R^2)$, 
the composition of pseudodifferential operators \cite[Thm. 18.1.8]{Hor3} yields that $\sigma(C_q) \in S^{-2-\epsilon}_{1,0}(\R^2\times \R^2)$ and the principal symbol of $q$ satisfies 
$$\sigma^p(C_q) \propto (v(x)\cdot \xi)^2 |\xi|^{-4-\epsilon} = \bra \xi^0, A(x)\xi^0\cet|\xi|^{-2-\epsilon},$$ where $A(x) = v(x)v(x)^\top$.
By Theorem \ref{thm:main_result} we can recover $v$ if either component is known a priori. Even if this is not known, one can recover ${\rm tr}(A(x)) = \norm{v(x)}^2$. 


\subsubsection{Fractional Brownian fields under diffeomorphism}

Following the Example 1 in \cite{LPS} let us define the multidimensional fractional Brownian motion in $\R^2$ for the Hurst index $H$ as the centered Gaussian process $X_H(z)$ indexed by $z \in \R^2$ with following properties:
\begin{align*}
	& \expec | X_H(z_1) - X_H(z_2)|^2 = |z_1 - z_2|^{2H} \quad \textrm{for all } z_1, z_2 \in \R^2 \\
	& X(z_0) = 0 \quad {\rm and} \\
	& \textrm{the paths } z \mapsto X_H(z) \textrm{ are a.s. continuous.}
\end{align*}
The existence and basic properties of $X_H$ are well-known \cite{Kahane}.
Consider now a diffeomorphism $F : \R^2 \to \R^2$ such that $F \in C^\infty(\R^2,\R^2)$, where $\nabla F$ has a uniform bound for the matrix norm. We define a process
\begin{equation}
	q(z,\omega) := a(z) X_H(F(z),\omega)
\end{equation}
for a deterministic function $a \in C_0^\infty(D)$ and index $H > 0$. In consequence, the covariance of $Y_H$ satisfies
\begin{equation*}
	C_{q}(z_1, z_2) = \frac 12 a(z_1) a(z_2)
 \left(|F(z_1) - F(z_0)|^{2H} + |F(z_2) - F(z_0)|^{2H} + |F(z_1) - F(z_2)|^{2H}\right).
\end{equation*}
By utilizing the Taylor expansion 
$F(z_1) = F(z_2) + (\nabla F)(z_1) (z_1 - z_2) + {\mathcal O}(|z_1-z_2|^2)$
we obtain $\sigma(C_q) \in S^{-2-2H}_{1,0}(\R^2\times \R^2)$ and
\begin{equation*}
	\sigma^p(z,\xi) \propto a(z)^2 |\nabla F(z) \xi|^{-2-2H} = b(z,\xi^0) |\xi|^{-2-2H},
\end{equation*} 
where $b(z,\xi^0) = a(z)^2 |\nabla F(z) \xi^0|^{-2-2H}$ with $\xi^0 = \xi/|\xi|$.

We recognize that $b$ is $p$-homogeneous for $p=-2-2H$ and does not fulfil assumption (A4). However, the assumption is used only in the proof of Theorem \ref{thm:recover}. In fact, we obtain the integrals $(\Rt b)(x',r)$ for any $x'\in \md'$ and $r>0$, where $\md'$ is the projection of $\md$.

%

\section{The forward problem}
\label{sec:forward}

We define the boundary to full space single layer potential $S_k:{\cal D}(\plane)\to {\cal D}'(\R^3)$ by
\begin{equation*}
	S_k\phi(x)=\int_{\plane} g_k(x-y)\phi(y) dy
\end{equation*}
for $x\in \R^3$.
We use notation $S^+_k : {\cal D}(\plane)\to {\cal D}'(\hs)$ and $S^B_k:{\cal D}(\plane)\to {\cal D}'(\plane)$ for the boundary to half-space and boundary to boundary restrictions of $S_k$, respectively. Recall that a single layer potential is continuous at the boundary $\plane$. However, the derivative has a well-known jump condition \cite{ColtonKress83}
\begin{equation}
	\label{eq:boundary_limit}
	\lim_{x_3\to 0} \frac{\p (S_k\phi)}{\p x_3}(x) = \int_{\plane} \left(\frac{\p }{\p x_3}g_k\right)(x-y) \phi(y)dy - \frac 12 \phi(x).
\end{equation}

In the following, we study uniqueness and existence in the context of a local Sobolev space. Consequently, the solutions to \eqref{eq:helmholtz}-\eqref{eq:sommerfeld_radiation}
should be understood in a weak sense.
\begin{definition}
We say that $f \in H^1_{loc}(\R^3_+)$ 
if for any $\phi \in C^\infty_0(\R^3)$ there exists $g \in H^1(\R^3)$ such that $f\phi = g$ in $\hs$.
\end{definition}

The definition above yields a Frechet space. This can be seen by considering a family of seminorms $p_j(\cdot) = \norm{\phi_j \cdot}_{H^1(\R^3)}$, where $\phi_j \in C^\infty_0(\R^3)$ and $\phi_j(x) = 1$ within a ball with radius $2^j$.

\begin{proposition}
\label{prop:homogeneous}
Let $u\in H^1_{loc}(\R^3_+)$ solve the homogeneous problem
\begin{eqnarray}
\label{eq:homogenous_problem}
(\Delta+k^2)u & = & 0,\quad\hbox{in }\R^3_+,\\ \nonumber
\frac{\p u}{\p x_3} +\df u & = & 0 \quad \hbox{on } \R^3_0,\nonumber
\end{eqnarray}
where $u$ satisfies the Sommerfeld radiation condition \eqref{eq:sommerfeld_radiation} and $\rho \in C^{0,\alpha}(D)$ for some $\alpha>0$. Then $u=0$.
\end{proposition}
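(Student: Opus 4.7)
My approach is to reduce the half-space Robin problem to a full-space problem via an even reflection across $\plane$, and then to combine the classical Rellich lemma with unique continuation. The reality of $\rho$ enters crucially through an energy identity.

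First, I would apply Green's formula on the half-ball $B_R^+ := \{x \in \hs : |x| < R\}$ to the pair $u,\bar u$. Since $k$ is real, the bulk integrand $u\Delta\bar u - \bar u \Delta u$ vanishes, and on the flat disk $D_R := \overline{B_R^+} \cap \plane$ the Robin condition together with the reality of $\rho$ forces the boundary integrand to vanish pointwise: $u(\rho\bar u) - \bar u(\rho u) = 0$. Hence for every large $R$,
\[
	\int_{S_R^+} \im(\bar u\, \p_r u)\, dS \;=\; 0,
\]
where $S_R^+$ is the upper half-sphere of radius $R$. Expanding the uniform Sommerfeld condition \eqref{eq:sommerfeld_radiation} yields $\int_{S_R^+} |\p_r u - iku|^2\, dS \to 0$ as $R \to \infty$, and combining the two gives $\int_{S_R^+} (|\p_r u|^2 + k^2|u|^2)\, dS \to 0$.

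Next, I would define the even reflection $v(x',x_3) := u(x',|x_3|)$. It is continuous across $\plane$, satisfies $(\Delta + k^2)v = 0$ away from $\plane$, and a short distributional computation using the Robin condition gives the jump identity
\[
	(\Delta + k^2) v \;=\; -2\rho(x')\, u(x',0)\, \delta_{x_3 = 0}
\]
in $\mathcal{D}'(\R^3)$. Since $\rho$ is supported in the bounded set $D$, the right-hand side is a compactly supported distribution; moreover, $v$ inherits the uniform Sommerfeld condition on full spheres, and the previous step yields $\int_{S_R} |v|^2\, dS = 2\int_{S_R^+} |u|^2\, dS \to 0$. For $R$ large enough that $\overline{D}\times\{0\} \subset B_R$, $v$ solves the homogeneous Helmholtz equation in $\R^3 \setminus \overline{B_R}$; the classical Rellich lemma then forces $v \equiv 0$ outside $B_R$, and unique continuation for Helmholtz on the connected open set $\hs$ (where $v$ coincides with $u$ and solves the homogeneous equation) implies $u \equiv 0$.

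The main technical care lies in justifying the traces needed for Green's identity and the distributional jump formula when only $u \in H^1_{loc}(\hs)$ is given. Standard elliptic regularity up to the boundary, exploiting $\rho \in C^{0,\alpha}(D)$, upgrades $u$ so that $u|_\plane$ and $\p_{x_3} u|_\plane$ lie in the appropriate local spaces and the jump identity is valid in $\mathcal{D}'(\R^3)$. A minor additional point is the verification that $v$ inherits the uniform Sommerfeld condition, which follows from the invariance of the leading outgoing asymptotics under the reflection $x_3 \mapsto -x_3$.
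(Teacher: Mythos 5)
Your proposal is correct and follows essentially the same route as the paper: even reflection of $u$ across $\plane$, an energy identity on the half-ball using the reality of $\rho$ to kill the imaginary part of the boundary flux, combination with the Sommerfeld condition to get $\int_{\S^2(r)}|u|^2\,dS\to 0$, and then Rellich plus unique continuation. The only cosmetic difference is ordering — you derive the vanishing of $\int_{S_R^+}\im(\bar u\,\p_r u)\,dS$ directly on $u$ before reflecting, while the paper reflects first and performs the same Green's-identity computation on the symmetrized function $\tilde u$ — but the mathematical content is identical.
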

\begin{proof}
Let $u=u_+$ be a solution to the problem \eqref{eq:homogenous_problem}.
We define $u_-(x',x_3) = u_+(x',-x_3)$ in $\R^3_-$, i.e. $x_3<0$, and denote
the symmetrization of $u_+$ by
\begin{equation*}
	\tilde u (x',x_3) =
	\begin{cases}
		u_+(x',x_3) & \quad x_3\geq 0, \\
		u_-(x',x_3) & \quad x_3 < 0.
	\end{cases}
\end{equation*}
By the second Green's identity we have
\begin{multline*}
	\bra \Delta \tilde u, \phi \cet_{{\mathcal D}'(\R^3) \times C^\infty_0(\R^3)}
 = \int_{\R^3_-} u_- \Delta \phi dx + \int_{\R^3_+} u_+ \Delta \phi dx \\
	= \int_{\R^3_-} \Delta u_- \phi dx + \int_{\p \R^3_-} \left( \frac{\p u_-}{\p n} \phi - u_- \frac{\p \phi}{\p n}\right) dx 
	 + \int_{\R^3_+} \Delta u_+ \phi dx - \int_{\p \R^3_+} \left( \frac{\p u_+}{\p n} \phi - u_+ \frac{\p \phi}{\p n}\right) dx.
\end{multline*}
Moreover, we have $u_+(x',0) = u_-(x',0)$ for any $x'\in \R^2$ and 
\begin{equation*}
	\frac{\p}{\p x_3} u_-(x',0)
	= \frac{\p}{\p x_3}\left(u_+(x',-x_3)\right)|_{x_3=0}
	= -\frac{\p}{\p x_3} u_+(x',0).
\end{equation*}
Now it follows that
\begin{equation}
	\label{eq:solves_helmholz}
	\bra (\Delta+k^2) \tilde u, \phi \cet_{{\mathcal D}'(\R^3) \times C^\infty_0(\R^3)}
	= -2 \int_{\p \R^3_+}  \frac{\p u_+}{\p n} \phi dx
	= -2 \int_{\plane} \df \tilde u \phi dx,
\end{equation}
and since $\df$ has a compact support, $\tilde u$ solves the Helmholtz equations outside any open neighbourhood ${\cal U}\subset \R^3$ of $\dom$. Furthermore, by standard interior regularity arguments we have that $\tilde u \in C^\infty(\R^3\setminus {\cal U})$.

For convenience, we distinguish the upper and lower half of the two-sphere by
$$\S^2_+(r) = \S^2(r)\cap \{x_3>0\} \quad  {\rm and}\quad  \S^2_-(r) = \S^2(r) \cap \{x_3<0\}.$$ 
From the radiation condition it follows that
\begin{equation}
	\label{eq:rad_cond_opened}
	0 = \lim_{r\to \infty} \int_{\S^2(r)} \left|\frac{\p\tilde u}{\p \nu} - ik\tilde u\right|^2 dS(x) 
	= \lim_{r \to \infty} \int_{\S^2(r)} \left(\left|\frac{\p\tilde u}{\p \nu}\right|^2
	+ |k|^2 |\tilde u|^2 + 2k \im \left(\tilde u \overline{\frac{\p \tilde u}{\p \nu}}\right) \right) dS(x), 
\end{equation}
where $dS(x)$ is the surface differential.
Now we have
\begin{eqnarray}
	\label{eq:im_rad_open}
	\int_{\S^2(r)} \im \left(\tilde u \overline{\frac{\p \tilde u}{\p \nu}}\right) dS(x)
	& = & \im \left(\int_{\S^2_+(r)} u_+ \overline{\frac{\p u_+}{\p \nu}} dS(x)
	- \int_{\S^2_-(r)} u_- \overline{\frac{\p u_-}{\p \nu}} dS(x) \right) \\
	& = & 2 \im \left(\int_{\S^2_+(r)} u_+ \overline{\frac{\p u_+}{\p \nu}}dS(x) \right)\nonumber
\end{eqnarray}
where $u_-(x) = u_+(-x)$. Next, integration by parts yields
\begin{equation*}
\int_{\S^2_+(r)} u_+ \overline{\frac{\p u_+}{\p \nu}}dS(x)
=  \int_{\B^2(r)} u_+ \overline{\frac{\p u_+}{\p \nu}}dS(x) \\
 - k^2 \int_{\B^3_+(r)} |u|^2 dx + \int_{\B^3_+(r)} |\nabla u|^2dx.
\end{equation*}
and due to the boundary condition we have
\begin{equation*}
	\int_{\B^2(r)} u_+ \overline{\frac{\p u_+}{\p \nu}}dS(x)
	= \int_{\B^2(r)} \rf_k |u_+|^2 dS(x)
\end{equation*}
Combining last three identities we see that integral \eqref{eq:im_rad_open} vanishes. From equation \eqref{eq:rad_cond_opened} it now follows immediately that 
%
$\lim_{r\to \infty} \int_{\S^2(r)} |\tilde u|^2dS(x) = 0$. Recall from \eqref{eq:solves_helmholz}
that $\tilde u$ is a solution to exterior Helmholtz problem 
of any open neighbourhood ${\cal U}\subset \R^3$ of $\dom$.
In consequence, the Rellich theorem yields that $\tilde u = 0$ in $\R^3\setminus {\cal U}$. By unique continuation principle \cite{ColtonKress13} we deduce that $u_+ = 0$ in $\R^3_+$.
\end{proof}

\begin{lemma}
For any $\phi\in L^2(D)$ we have
\begin{equation}
	S_k^B\phi(x) = \int_{\R^2} \exp(ix \cdot \xi) p(\xi) \hat \phi(\xi) d\xi
	\quad {\rm for} \quad x \in \plane
\end{equation}
for 
\begin{equation*}
	p(\xi) =  \frac{C(\xi)}{\sqrt{|\xi^2-k^2|}}
		\quad {\rm where} \quad
	C(\xi) = 
	\begin{cases}
		\pi, \quad & |\xi'| > k, \\
		\pi i, \quad & |\xi'| < k.
	\end{cases}
\end{equation*}
\end{lemma}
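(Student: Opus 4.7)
The plan is to reduce $S_k^B\phi$ to a convolution on $\R^2$ and identify its Fourier multiplier by computing the boundary symbol of the outgoing Helmholtz Green's function via residue calculus applied to its 3D Fourier representation.

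First I would note that for $x \in \plane$,
\begin{equation*}
S_k^B\phi(x) = \int_{\R^2} g_k(x-y)\,\phi(y)\,dy = (G*\phi)(x),
\end{equation*}
where $G(z) := g_k(z,0) = e^{ik|z|}/(4\pi|z|)$ is viewed as a tempered distribution on $\R^2$. Since $\phi \in L^2(D)$ has compact support, $\hat\phi$ is smooth and rapidly decaying on the real axis, so the convolution is well defined. By the convolution theorem, proving the lemma reduces to computing the distributional Fourier transform of $G$ in $\R^2$ and recognising it, up to the $(2\pi)^{-n}$ factor absorbed in the paper's Fourier convention, as the symbol $p(\xi)$.

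The second step computes $\hat G$ starting from the 3D representation
\begin{equation*}
g_k(x) = \frac{1}{(2\pi)^3}\int_{\R^3}\frac{e^{i\xi\cdot x}}{|\xi|^2-k^2-i0}\,d\xi,
\end{equation*}
in which the $-i0$ prescription encodes the outgoing Sommerfeld condition \eqref{eq:sommerfeld_radiation}. Setting $x_3=0$, writing $\xi=(\xi',\xi_3)$, and evaluating the $\xi_3$-integral by closing the contour in the upper half plane (picking up the pole at $\xi_3 = \sqrt{k^2-|\xi'|^2+i0}$) gives
\begin{equation*}
\int_{\R}\frac{d\xi_3}{\xi_3^2 + |\xi'|^2 - k^2 - i0} = \frac{i\pi}{\sqrt{k^2-|\xi'|^2+i0}}.
\end{equation*}
The principal branch of the square root, approached from above, equals $\sqrt{k^2-|\xi'|^2}$ when $|\xi'|<k$ and $i\sqrt{|\xi'|^2-k^2}$ when $|\xi'|>k$. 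The first case produces $C(\xi)=\pi i$, the second $C(\xi)=\pi$, exactly as stated. Substituting back and invoking the convolution theorem then yields the claimed representation of $S_k^B\phi$.

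The main technical delicacy is the rigorous handling of the $-i0$ prescription rather than the algebra: the inner $\xi_3$-integral is only conditionally convergent, so one should replace $k^2$ by $k^2+i\varepsilon$ to obtain a Yukawa kernel, perform the computation, and then take $\varepsilon\downarrow 0$ in the sense of tempered distributions. The resulting interchange of Fubini, limit and Fourier pairing against $\hat\phi$ is justified by the compact support of $\phi$ (which makes $\hat\phi$ entire and rapidly decaying on the real axis) together with the local integrability of $p(\xi)$ near the characteristic set $|\xi|=k$, where the singularity is of order $1/\sqrt{||\xi|-k|}$ and is integrable in two dimensions. No ideas beyond this standard regularisation are needed.
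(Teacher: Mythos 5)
Your argument is essentially the same as the paper's: write $S_k^B\phi$ as a 2D convolution with $g_k(\cdot,0)$, pass to the 3D Fourier representation of $g_k$, and evaluate the $\xi_3$-integral by residues, distinguishing $|\xi'|>k$ from $|\xi'|<k$. If anything yours is cleaner on the sign convention: the paper regularizes with $+i\epsilon$ (which nominally gives the incoming kernel $e^{-ik|x|}/(4\pi|x|)$ and reaches $C(\xi)=\pi i$ only via a compensating slip in the residue step), whereas your $-i0$ prescription is the correct one for the outgoing Sommerfeld condition and yields $\pi i/\sqrt{k^2-|\xi'|^2}$ directly.
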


\begin{proof}
We use notation $x=(x',x_3)\in \R^3$, $x'=(x_1,x_2)$. Note that $g_k\in H^t_{loc}(\R^3)$ for $t<1$ and thus the trace $g_k(\cdot, x_3)$ is well-defined and belongs to $H^\tau_{loc}(\R^2)$ for all $\tau < \frac 12$. Now write
	\begin{equation*}
		g_k(x',0) = \lim_{\epsilon\to 0} \int_{\R^3} \frac{\exp(ix'\cdot \xi')}{\xi'^2-k^2+\xi_3^2 + i\epsilon}d\xi' d\xi_3.
	\end{equation*}
We want to calculate
\begin{equation*}
	I_\epsilon(\xi') = \int_{\R} \frac{1}{\xi'^2-k^2+\xi_3^2+i\epsilon} d\xi_3
\end{equation*}
and study the limit
\begin{equation*}
	I(\xi') = \lim_{\epsilon\to 0} I_\epsilon(\xi') = (\Ft_{x'}g_k)(\xi',0)
\end{equation*}
in the sense of generalized functions.

Assume first $a^2:=\xi'^2-k^2>0$. Now the Lebesgue's dominated convergence
yields
$I(\xi') = \int_\R (\xi_3^2+a^2)^{-1}d\xi_3$. The extension of function $f(\xi') = (\xi_3^2+a^2)^{-1}$ to the complex plane has poles at $\xi'=ia$ and $\xi'=-ia$.
Let $\gamma_R$ be positively oriented contour that goes along the real line from $-R$ to $R$ and then counterclockwise along a semicircle centered at origin from $R$ to $-R$. Since $\gamma_R$ contains the pole at $ia$, the residue theorem yields
\begin{equation*}
	\int_{\gamma_R} f(\xi) = 2\pi i\, {\rm Res}(f,ia) = \frac{\pi}{\sqrt{\xi'^2+a^2}}.
\end{equation*}
Taking $R$ to infinity gives $I(\xi')$ since, the integral over the arc decays to zero.

By similar arguments it follows for 
$k^2>\xi'^2$ and
$b:=\sqrt{k^2-\xi'^2}>0$ that
\begin{equation*}
	I_\epsilon = \int_\R \frac{d\xi_3}{\xi_3^2-(b^2-i\epsilon)}
	= \int_\R \frac{d\xi_3}{(\xi_3-b_\epsilon)(\xi_3+b_\epsilon)}
	= \frac{\pi i}{b_\epsilon},
\end{equation*}
where $b_\epsilon$ is the square-root of $b^2-i\epsilon$.
By taking $\epsilon$ to zero we obtain the claim.
\end{proof}

In the next proposition we prove a bound for the operator norm of $S^B_k$ following the strategy used in \cite{BPS14}. With that in mind, let us introduce two essential concepts. Namely, let $\{\Phi_j\}_{j=0}^\infty \subset C^\infty(\R)$ form a dyadic partition of a unity
\begin{equation*}
	\sum_{j=0}^\infty \Phi_j(t) = 1
\end{equation*}
for any $t\in \R$ such that the following conditions hold:
\begin{itemize}
	\item[(1)] ${\rm supp} (\Phi_0) \subset [-2,2]$ and
	\item[(2)] there exists $\Phi \in C^\infty(\R)$ such that ${\rm supp} (\Phi) \subset (\frac 12,2)$ and $\Phi_j(t) = \Phi(\frac{t}{2^j})$ for $j\geq 1$.
\end{itemize}
The Theorem 3.1. in \cite{BPS14} is formulated using so-called $\epsilon$-mollifiers. We reproduce the definition here for the sake of clarity.
\begin{definition}
A family of $\epsilon$-mollifiers, $\chi_\epsilon(x,y)$, defined on ${\cal A}_1\times {\cal A}_2 \subset \R^n\times \R^n$ satisfies
\begin{itemize}
	\item[i)] $\sup_{x\in {\cal A}_1} \int_{{\cal A}_2} |\chi_\epsilon(x,y)| dy \leq C$,
	\item[ii)] $|\chi_\epsilon(x,y)| \leq \frac{C_N}{\epsilon^n} 
\left(\frac{\epsilon}{|x-y|}\right)^N$ for all $N\in {\mathbb N}$ and
	\item[iii)] $|\nabla_y \chi_\epsilon(x,y)| \leq \frac{C_N}{\epsilon^{n+1}}\left(\frac{\epsilon}{|x-y|}\right)^N$ for all $N\in {\mathbb N}$.
\end{itemize}
\end{definition}
What is crucial to our treatise below is that functions $\chi_\epsilon(x,y)=\widehat{\Phi}_j(x-y)$ form a family of $\epsilon$-mollifiers for $\epsilon = 2^{-j}$. 

\begin{proposition}
\label{prop:norm_estimate}
The operator norm of $\chi_DS^B_k$ in $L^2(D)$ is bounded by
\begin{equation}
	\norm{\chi_DS^B_k}_{L^2(D)\to L^2(D)} \leq \frac{C}{\sqrt{k}}.
\end{equation}
\end{proposition}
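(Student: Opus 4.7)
My plan is to exploit the Fourier-multiplier representation of $S^B_k$ provided by the preceding lemma: $S^B_k$ is the multiplier with symbol $p(\xi)$, pointwise of size $|p(\xi)|\sim 1/\sqrt{||\xi|^2-k^2|}$, smooth away from the circle $\{|\xi|=k\}$, with a square-root singularity along that circle and decaying like $|\xi|^{-1}$ at infinity. The idea is to dyadically decompose $p$ around the singular circle and estimate each piece separately, with the cutoff $\chi_D$ providing the saving that compensates for the singularity.

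For the ``regular'' part I would define $p_{\mathrm{reg}}$ as the restriction of $p$ to $\{||\xi|-k|\geq 1\}$. On this set $||\xi|^2-k^2|=||\xi|-k|(|\xi|+k)\geq k$, so $\|p_{\mathrm{reg}}\|_{L^\infty}\leq C/\sqrt{k}$. Plancherel combined with the trivial estimate $\|\chi_D\,\cdot\|_{L^2(D)}\leq\|\cdot\|_{L^2(\R^2)}$ immediately yields the desired $C/\sqrt{k}$ bound for this piece.

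The remaining ``singular'' piece, supported in $\{||\xi|-k|<1\}$, is then further decomposed about the sphere $\{|\xi|=k\}$ using the dyadic partition: for $j\geq 1$ I would set $p_j(\xi)=p(\xi)\Phi(2^{j}||\xi|-k|)$, so that $p_j$ is supported on the shell $||\xi|-k|\sim 2^{-j}$ and $\|p_j\|_{L^\infty}\sim 2^{j/2}/\sqrt{k}$. A bare Plancherel estimate gives the non-summable bound $\|T_{p_j}\|_{L^2\to L^2}\leq C\cdot 2^{j/2}/\sqrt{k}$, and the gain required to sum the series has to come from the localization to $D$. This is where \cite[Thm.~3.1]{BPS14} enters: the kernel of $T_{p_j}$ admits a representation involving $\widehat{\Phi_j}$, which by the remark preceding the proposition is a $2^{-j}$-mollifier in the sense of the definition just recorded. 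The theorem then supplies the improvement of a geometric factor over the naive operator-norm estimate once the operator is composed with $\chi_D$, turning the divergent-in-$j$ bound into $\|\chi_D T_{p_j}\|_{L^2(D)\to L^2(D)}\leq C\cdot 2^{-j/2}/\sqrt{k}$, which sums geometrically and combines with the regular piece to yield the claim.

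The principal obstacle is therefore the singular annulus $|\xi|\approx k$: the multiplier grows like $||\xi|-k|^{-1/2}$ there and cannot be controlled by Plancherel alone, whereas the high-frequency and low-frequency regimes are tame. The decisive technical step is identifying each dyadic piece $p_j$ as a convolution against the $2^{-j}$-mollifier $\widehat{\Phi_j}$, verifying that the hypotheses of \cite[Thm.~3.1]{BPS14} are satisfied in the present setup, and tracking carefully the $k$-dependence of the constants as one sums the geometric series in $j$.
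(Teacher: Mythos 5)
Your global intuition is right --- the enemy is the square-root singularity of the multiplier along the circle $\{|\xi|=k\}$, and the compact localization in $D$ is what rescues the estimate, with the $\epsilon$-mollifier machinery of \cite{BPS14} supplying the quantitative input. But your decomposition is in the wrong variable, and the key per-piece bound you need is not established.

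The paper does \emph{not} dyadically decompose the multiplier in frequency around $\{|\xi|=k\}$; it dyadically decomposes the \emph{kernel $g_k$, the density $\phi$ and the output $h=\chi_D S_k\phi$ in the spatial variable}, using $\Phi_j(|x|)$ at scale $2^j$. The Fourier transform of the $j$-th spatial piece of the kernel is then $\hat\Phi_j * p$, and $\hat\Phi_j$ is a genuine $2^{-j}$-mollifier; applying the mollifier inequality (after rescaling by $k$) gives $\|\widehat{\Phi_j g_k}\|_{L^\infty}\leq 2^{j/2}/\sqrt{k}$. Note this bound \emph{grows} in $j$. The sum over $j$ is nevertheless controlled because a convolution-support argument shows that $\Phi_m g_k^j*\phi_\ell$ vanishes once $j>3+\max(\ell,m)$, and both $\ell$ and $m$ are bounded because $\phi$ and $h$ are supported in the fixed bounded set $D$. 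So the mechanism is: a growing-in-$j$ estimate summed over a \emph{finite}, $D$-dependent range of $j$.

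Your plan instead cuts the multiplier along shells $\{||\xi|-k|\sim 2^{-j}\}$ and asserts that $\|\chi_D T_{p_j}\|\leq C\,2^{-j/2}/\sqrt{k}$ so that the sum converges geometrically. This bound is precisely the gap. The bare Plancherel estimate gives $2^{j/2}/\sqrt{k}$; the Schur and $TT^*$ bounds for $\chi_D T_{p_j}\chi_D$ that I can verify give at best $O(2^{-j/2})$ and $O(k^{-1/4})$ respectively, and interpolating between Plancherel and Schur over the dyadic range lands at $k^{-1/4}\log k$, not $k^{-1/2}$. You attribute the missing geometric gain to \cite[Thm.~3.1]{BPS14} ``because the kernel of $T_{p_j}$ admits a representation involving $\widehat{\Phi_j}$,'' but that identification does not hold: $\widehat{\Phi_j}$ is the Fourier transform of the \emph{spatial} cutoff $\Phi_j(|x|)$ and is what the paper convolves against $p$; the inverse Fourier transform of your \emph{frequency} cutoff $\Phi(2^j(|\xi|-k))$ supported on a thin annulus of radius $k$ is a different oscillatory kernel and is not a $2^{-j}$-mollifier in the required sense. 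Without a correct substitute for that per-piece estimate, the frequency decomposition does not close, and the spatial decomposition (with the finite support argument) is what makes the proof work.

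Your treatment of the regular region $\{||\xi|-k|\geq 1\}$ via $\|p_{\mathrm{reg}}\|_{L^\infty}\leq C/\sqrt{k}$ and Plancherel is correct and essentially subsumed in the paper's $j=0$ contribution.
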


\begin{proof}
Consider now a symbol $p(y) = |y|^2-1$ in $\R^2$. Clearly, the characteristic variety of $p$ is a unit sphere at the origin, i.e, ${\mathcal M} := p^{-1}(0)= \S^1$ with codimension $1$.
Next, we set $q(y) := C(y)|p(y)|^{-1/2}$ with $|C(y)| = C$, and prove that if $\chi_\epsilon$ is a family of $\epsilon$-mollifiers then
there exists a constant $C>0$ such that
\begin{equation}
	\label{eq:mollifier_ineq}
	f(x) := \int_{\R^2} \left|\chi_\epsilon(x,y) q(y)\right| dy \leq \frac C{\sqrt{\epsilon}}
\end{equation}
for any $x\in \R^2$.
Before we proceed let us record three useful inequalities from \cite{BPS14}:
for any $x\in\R^2$ and $r>0$ there exists a constant $C>0$ such that
\begin{equation*}
	|B(x,r)\cap {\mathcal M}| \leq Cr.
\end{equation*}
Moreover, for any $x\in \R^2$ it holds that 
\begin{equation}
	\label{eq:dist_to_M}
	|p(x)| \geq d(x,{\mathcal M}).
\end{equation}
Also, a direct consequence of \cite[Prop. 3.11.]{BPS14} is that
\begin{equation}
	\label{eq:sup_of_mollifier}
	\sup_{x\in \R^2} \int_{{\mathcal M}} |\chi_\epsilon(x-y)| d\sigma(y) \leq \frac{C}{\epsilon}.
\end{equation}
Next, we 
consider inequality \eqref{eq:mollifier_ineq} separately in and outside of a set defined by
\begin{equation}
	{\mathcal N}_\delta = \{ x\in \R^2 \; | \; d(x,{\mathcal M}) \leq \delta\}
	= \{x \in \R^2 \; | \; k-\delta \leq |x| \leq k + \delta\}.
\end{equation}

First, recall that $\sup_x\int_{\R^2} |\chi_\epsilon(x,y)|dy < C$.
Clearly, for $x$ outside ${\mathcal N}_{1/2}$, we have $q(x) \leq C$. Also,
for $x\in {\mathcal N}_{1/2} \setminus {\mathcal N}_\epsilon$ we can estimate $q(x) \leq C/\sqrt{\epsilon}$ due to \eqref{eq:dist_to_M}. Thus, inequality follows outside ${\mathcal N}_\epsilon$.

Secondly, consider \eqref{eq:mollifier_ineq} inside ${\mathcal N}_\epsilon$.
Using the same arguments as above, we see that
\begin{equation*}
\sup_{x\in {\mathcal N}_\epsilon} \int_{\R^2\setminus{\mathcal N}_\epsilon}
	|\chi_\epsilon(x,y) q(y)| dy \leq \frac C{\sqrt{\epsilon}}.
\end{equation*}
Utilizing the inequality \eqref{eq:sup_of_mollifier} we obtain
\begin{eqnarray*}
	\sup_{x\in {\mathcal N}_\epsilon} \int_{{\mathcal N}_\epsilon}
	|\chi_\epsilon(x,y) q(y)| dy 
	& = & \sup_{x\in {\mathcal N}_\epsilon} \int_{\mathcal M} \int_{1-\epsilon}^{1+\epsilon} |\chi_\epsilon(x,(r,\theta)) q(r)| rdr d\sigma(\theta) \\
	& \leq & \frac C\epsilon \int_{1-\epsilon}^{1+\epsilon} \frac{r}{\sqrt{|r^2-1|}} dr \leq \frac C{\sqrt{\epsilon}}.
\end{eqnarray*}
This proves inequality \eqref{eq:mollifier_ineq}.

We can now turn our attention to the claim. By applying \eqref{eq:mollifier_ineq} with $\epsilon = k 2^j$, one can show that 
\begin{eqnarray}
	\label{eq:part_ineq}
	\norm{{\mathcal F}(\Phi_j g_k)}_{L^\infty(\R^2)}
	& = & \sup_{\eta \in \R^2} \left| \int_{\R^2} 2^{2j} \widehat{\Phi}(2^{j}(\eta-\xi)) \frac{c(\xi)}{\sqrt{|\xi^2-k^2|}} d\xi\right| \nonumber \\
	& = & \frac 1 k\sup_{\tau \in \R^2} \left|\int_{\R^2} (k2^j)^2 \widehat{\Phi}(k2^j(\tau-\rho)) \frac{c(\rho/k)}{\sqrt{|\rho^2-1|}} d\rho \right| \nonumber \\
	& \leq & \frac{1}{k} \sqrt{k2^j} = \frac{2^{j/2}}{\sqrt{k}}.
\end{eqnarray}
Let us now write $h=\chi_\dom S_k\phi = \chi_\dom g_k \ast \phi$ for $\phi\in L^2(\dom)$ and use notation
$h_m = \Phi_m h$, $g^j_k = \Phi_j g_k$ and $\phi_\ell = \Phi_\ell \phi$ for $m,k,\ell\geq 0$. Notice that since $\phi$ is compactly supported, there are only finitely many indeces $\ell$ such that $\phi_\ell$ is non-zero.
We have now the identity
\begin{equation*}
	h_m = \Phi_m \sum_{j=0}^\infty \sum_{\ell=0}^\infty g_k^j \ast \phi_\ell.
\end{equation*}
Recall that the support of convolution is a subset of the sum of the supports. In our case, for $\Phi_m g_k^j\ast \phi_\ell = 0$ if $2^{m+1} < 2^{j-1}-2^{\ell+1}$.
In particular, this is satisfied if $j>3+\max(\ell,m)$.
Taking the Fourier transform, we can estimate
\begin{equation*}
	\norm{\hat h_m}_{L^2(\R^2)} \leq C \sum_{\ell = 0}^\infty \sum_{j=0}^{\max(\ell,m)+3}
	\norm{\hat g^j_k}_{L^\infty(\R^2)} \norm{\hat \phi_\ell}_{L^2(\R^2)}
\end{equation*}
for any $m \geq 0$.
Combining \eqref{eq:part_ineq} and the Parseval's identity we obtain
\begin{equation}
	\label{eq:BPS_ineq}
	\sup_m 2^{-m} \norm{h_m}_{L^2(\R^2)} \leq C \sum_{\ell=0}^\infty \frac{2^{\ell/2}}{\sqrt{k}} \norm{\phi_\ell}_{L^2(\R^2)} \leq \frac{C_D}{\sqrt{k}} \norm{\phi}_{L^2(\R^2)},
\end{equation}
where the last inequality follows from $D$ being bounded. Moreover, since $h$ is compactly supported, there are only finite number of non-zero $\{h_j\}$. In consequence, the left-hand side in \eqref{eq:BPS_ineq} can be bounded from below by $C\norm{h}_{L^2(\R^2)}$. This yields the claim.
\end{proof}

\begin{lemma}
Let $\chi \in C_0^\infty(\R^3)$. The operator $\chi S_k: H^{-1/2}(D) \to H^1(\R^3)$ has a bounded norm.
\end{lemma}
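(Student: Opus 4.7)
The plan is to realize $u := S_k\phi$ as the outgoing solution to the Helmholtz equation $(-\Delta - k^2)u = \phi \otimes \delta_\plane$ in $\R^3$, and then combine a local $L^2$ bound for $u$ with interior elliptic regularity for the Helmholtz operator to obtain the required $H^1$ estimate on $\chi u$.

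First I would extend $\phi \in H^{-1/2}(\dom)$ by zero to a compactly supported element of $H^{-1/2}(\R^2)$; since $\dom$ is bounded, this extension satisfies $\|\phi\|_{H^{-1/2}(\R^2)} \le C\|\phi\|_{H^{-1/2}(\dom)}$. By duality with the trace map $\mathrm{tr}: H^1(\R^3) \to H^{1/2}(\R^2)$, the distribution $F := \phi \otimes \delta_\plane$ belongs to $H^{-1}(\R^3)$, has compact support, and satisfies $\|F\|_{H^{-1}(\R^3)} \le C\|\phi\|_{H^{-1/2}(\dom)}$. Indeed, for any $\psi \in C^\infty_0(\R^3)$,
\begin{equation*}
|\bra F,\psi\cet_{\R^3}| = |\bra \phi, \mathrm{tr}\,\psi\cet_{\plane}| \le \|\phi\|_{H^{-1/2}(\R^2)}\,\|\mathrm{tr}\,\psi\|_{H^{1/2}(\R^2)} \le C\|\phi\|_{H^{-1/2}(\dom)}\|\psi\|_{H^1(\R^3)}.
\end{equation*}

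Since $g_k$ is the outgoing fundamental solution of $-\Delta - k^2$, one has $u = g_k *_{\R^3} F$, so $u$ satisfies $(-\Delta - k^2)u = F$ in $\R^3$ together with the Sommerfeld radiation condition. I would then choose $\tilde\chi \in C^\infty_0(\R^3)$ with $\tilde\chi \equiv 1$ on a neighborhood of $\mathrm{supp}(\chi) \cup \overline{\dom}$. The key step is a local $L^2$ estimate
\begin{equation*}
\|\tilde\chi u\|_{L^2(\R^3)} \le C\|F\|_{H^{-1}(\R^3)},
\end{equation*}
which is a standard outgoing resolvent bound for compactly supported sources (the limiting absorption principle); in the present half-space geometry it can also be derived directly by extending the partial Fourier representation of $g_k$ in Lemma 4.3 from $x_3 = 0$ to general $x_3$ and then exploiting that $\tilde\chi$ is compactly supported. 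Once this bound is in place, the ellipticity of $-\Delta - k^2$ (order $2$, principal symbol $|\xi|^2$) together with interior elliptic regularity applied to the equation satisfied by $\tilde\chi u$ (namely $(-\Delta - k^2)(\tilde\chi u) = \tilde\chi F - 2\nabla\tilde\chi\cdot\nabla u - (\Delta\tilde\chi) u$, where the commutator terms are controlled by $\|u\|_{L^2(\mathrm{supp}\nabla\tilde\chi)}$) upgrades this to $\|\tilde\chi u\|_{H^1(\R^3)} \le C\|\phi\|_{H^{-1/2}(\dom)}$. Since $\chi u = \chi(\tilde\chi u)$, the lemma follows.

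The main obstacle is the local $L^2$ estimate: a naive Plancherel approach fails because the multiplier $1/(|\xi|^2 - k^2 - i0)$ is not locally square integrable near the characteristic cone $|\xi|^2 = k^2$. The compact support of $F$ together with the compact cutoff $\tilde\chi$ on the image side are both essential to temper this singularity, reflecting the fact that the outgoing Helmholtz solution is only $L^2_{loc}$ and not $L^2(\R^3)$; this is also why the cutoff $\chi \in C^\infty_0(\R^3)$ in the statement of the lemma cannot be removed.
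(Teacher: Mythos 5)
Your proof is correct in outline, and it is essentially the paper's proof read from the primal rather than the dual side. The paper dualizes: it tests $\chi S_k\phi$ against $\psi\in H^{-1}(\R^3)$, rewrites the pairing as $\bra\phi,\mathrm{Tr}_{\R^2}(\chi_D G_k^*(\chi\psi))\cet$, applies the trace theorem to extract $\|\phi\|_{H^{-1/2}(\R^2)}$, and then cites \cite[Thm.\ 6.11]{Mclean} for the bound $\|\chi_D G_k^*(\chi\psi)\|_{H^1(\R^3)}\le C\|\psi\|_{H^{-1}(\R^3)}$. You instead promote $\phi$ to the compactly supported volume source $F=\phi\otimes\delta_\plane\in H^{-1}(\R^3)$ (which uses the very same trace-theorem duality, applied in the opposite direction) and then invoke the local $H^{-1}_{comp}\to H^1_{loc}$ mapping property of the outgoing Helmholtz potential $G_k$. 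Both arguments hinge on that single mapping property; what you flag as the main obstacle --- a local $L^2$ resolvent estimate for compactly supported $H^{-1}$ sources followed by interior elliptic regularity --- is precisely what McLean's Theorem 6.11 packages, and the paper simply cites it. Two small points if you insist on deriving it rather than citing it: (i) the proposed extension of Lemma 4.3 to general $x_3$ is not actually carried out and would need care near the characteristic sphere $|\xi'|=k$, so it is cleaner to cite the standard outgoing resolvent/limiting-absorption estimate; and (ii) the commutator $-2\nabla\tilde\chi\cdot\nabla u$ involves $\nabla u$, not $u$, so it is not literally controlled by $\|u\|_{L^2(\mathrm{supp}\nabla\tilde\chi)}$; rewrite it as $\nabla\cdot\big((\nabla\tilde\chi)\,u\big)-(\Delta\tilde\chi)\,u\in H^{-1}+L^2$, or first run a Caccioppoli-type estimate to get $\nabla u\in L^2_{loc}$, to close the bootstrap.
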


\begin{proof}
Let $\chi_D\in C_0^\infty(\R^3)$ be such that $\chi_D \equiv 1$ in $\B^3(R)$ where $R>0$ is large enough such that $D \subset \B^2(R)$. Now, let $\psi\in H^{-1}(\R^3)$ and $\phi \in H^{-1/2}(D)$. Let us denote the full space potential by
$G_kf = g_k * f : {\mathcal D}(\R^3) \to {\mathcal D}'(\R^3)$, where the convolution is taken in $\R^3$.
We have
\begin{eqnarray*}
	\bra \chi S_k \phi, \psi\cet & = & 
	\bra \phi, {\rm Tr}_{\R^2} \left(\chi_D G_k^*(\chi \psi)\right) \cet \\
	& \leq & C \norm{\phi}_{H^{-1/2}(\R^2)} \norm{\chi_D G_k^* (\chi \psi)}_{H^1(\R^3)} \\
	& \leq & C \norm{\phi}_{H^{-1/2}(\R^2)} \norm{\psi}_{H^{-1}(\R^3)},
\end{eqnarray*}
where we used the mapping properties of $G_k$ in \cite[Thm. 6.11.]{Mclean}. The claim now follows since $\psi$ was arbitrary.
\end{proof}

\begin{theorem}
	\label{thm:sol}
	Let us write
	\begin{equation}
		\label{eq:sol}
		u=S_k^+\phi+u_{in},
	\end{equation}
	where $\phi$ is the unique solution to the problem
	\begin{equation}
		\label{eq:potential}
		\left(\frac 12-\rf_k S^B_k\right)\phi = \rf_k u_{in}.
	\end{equation}
    in $L^2(D)$ for almost every realization of $\rf_k$.
    Then function $u \in H^1_{loc}(\R^3_+)$ is the unique solution to the problem \eqref{eq:helmholtz}-\eqref{eq:sommerfeld_radiation} almost surely.	
\end{theorem}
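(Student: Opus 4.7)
The plan splits into three tasks: (i) unique solvability of the integral equation \eqref{eq:potential} in $L^2(D)$ almost surely; (ii) verification that $u = S_k^+\phi + u_{in}$ solves \eqref{eq:helmholtz}--\eqref{eq:sommerfeld_radiation}; and (iii) uniqueness via Proposition \ref{prop:homogeneous}. For (i), I would fix $\omega$ so that $\rf(\omega)$ is Hölder continuous on $\R^2$, which holds almost surely by the regularity result of Section 3. The operator $\rf_k S_k^B : L^2(D) \to L^2(D)$ is compact, since $S_k^B$ has the weakly singular kernel $g_k(x-y)$ on the bounded set $D$ (equivalently, by the Fourier symbol inspection in the preceding lemma it gains a fractional derivative away from the characteristic cone), and multiplication by the bounded $\rf_k$ preserves this. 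Alternatively, for $k$ large enough that $C\|\rf(\omega)\|_\infty / k^{p+1/2} < 1/2$, Proposition \ref{prop:norm_estimate} together with assumption (A4) yields the operator norm bound $\|\rf_k S_k^B\|\leq C\|\rf(\omega)\|_\infty / k^{p+1/2}$, so the inverse of $\tfrac 12 - \rf_k S_k^B$ exists as a convergent Neumann series.

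The crux of (i) is the injectivity of $\tfrac 12 - \rf_k S_k^B$, which combined with the Fredholm alternative produces unique solvability for every $k \geq 1$. Suppose $\phi_0 \in L^2(D)$ is annihilated by this operator and set $v := S_k^+ \phi_0 \in H^1_{loc}(\R^3_+)$, using the lemma preceding the theorem. Because the interface is flat, $\partial_{x_3} g_k(x-y)$ vanishes whenever both $x,y \in \plane$ (since $\partial_{x_3}|x-y| = x_3/|x-y|$ vanishes at $x_3=0$), so the jump formula \eqref{eq:boundary_limit} collapses to $\partial_3 v|_{x_3=0^+} = -\tfrac 12 \phi_0$. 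Continuity of the single layer gives $v|_{\plane} = S_k^B \phi_0$, and multiplying by $\rf_k$ and invoking the homogeneous equation yields $\rf_k v|_{\plane} = \rf_k S_k^B \phi_0 = \tfrac 12 \phi_0$. Therefore $\partial_3 v + \rf_k v = 0$ on $\plane$. Since $v$ also satisfies Helmholtz in $\R^3_+$ and inherits the Sommerfeld condition from $g_k$ (thanks to the compact support of $\phi_0$), Proposition \ref{prop:homogeneous} forces $v \equiv 0$, and the jump relation then gives $\phi_0 = -2\partial_3 v|_{x_3=0^+} = 0$.

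For (ii), given $\phi$ solving \eqref{eq:potential}, the function $u = S_k^+\phi + u_{in}$ lies in $H^1_{loc}(\R^3_+)$ by the lemma above together with the smoothness of $u_{in}$ away from $y$. It satisfies \eqref{eq:helmholtz} because $u_{in}$ produces the point source while $S_k^+\phi$ is Helmholtz-harmonic on $\R^3_+$, and the Sommerfeld condition is inherited from $g_k$. The Robin boundary condition is a direct restatement of \eqref{eq:potential}: applying the jump relation and using $\partial_3 u_{in}|_{\plane} = 0$ gives $\partial_3 u + \rf_k u = -\tfrac 12 \phi + \rf_k(S_k^B\phi + u_{in}|_{\plane}) = 0$. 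For (iii), any two solutions $u_1, u_2$ produce a difference $w = u_1 - u_2$ which solves \eqref{eq:homogenous_problem} with Sommerfeld radiation; Proposition \ref{prop:homogeneous} (applicable since $\rf_k \in C^{0,\alpha}$ almost surely) yields $w \equiv 0$. I expect the main obstacle to be the injectivity step in (i), particularly justifying that the $L^2$-constructed $v$ has enough regularity up to the boundary for Proposition \ref{prop:homogeneous} to apply, and rigorously pinning down the compactness of $\rf_k S_k^B$ in the Fredholm argument given the delicate Fourier symbol on the characteristic cone.
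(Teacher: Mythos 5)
Your proposal follows the same route as the paper's proof: compactness of $\rf_k S_k^B$ from the weakly singular kernel together with a.s.\ H\"older continuity of $\rf_k$, the Fredholm alternative, injectivity via Proposition \ref{prop:homogeneous} applied to $v=S_k^+\phi_0$, and verification of the Robin condition via the jump formula \eqref{eq:boundary_limit}. You spell out the injectivity step (the vanishing of $\partial_{x_3}g_k$ on the flat interface, the Sommerfeld condition for the layer potential) in more detail than the paper's terse version, but the logic is identical.
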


\begin{proof}
It is straigtforward to see that the problem (\ref{eq:helmholtz})-(\ref{eq:sommerfeld_radiation}) has at most one solution in $H^1_{loc}(\R^3_+)$ due to Proposition \ref{prop:homogeneous}. Namely, if $u_1 \neq u_2$ are two solutions, then $\tilde u = u_1 - u_2$ solves the homogeneous problem \eqref{eq:homogenous_problem}. 

Let us now consider invertibility of \eqref{eq:potential}.
Clearly, we have $\rf_k u_{in} \in L^2(D)$ almost surely. 
We factorize $\rf_k S^B_k = \rf_k (\chi_D S^B_k)$ into two components: applying $\chi_D S^B_k$, where $\chi_D$ is the characteristic function of $D\subset \R^2$, and the multiplication operator $f \mapsto \rf_k f$.
The operator $\chi_D S^B_k$ is bounded in $L^2(D)$ and has a weakly singular kernel. It is well-known that such an operator is compact in $L^2(D)$ \cite{Vainikko}. Moreover, since $\rf_k$ is almost surely H\"older continuous, the multiplication by $\rf_k$ is a bounded operation in $L^2(D)$.
In consequence, $\rf_k S^B_k$ is compact.

For the injectivity of \eqref{eq:potential} suppose $\psi \in {\mathcal N}(\frac 12 - \lambda_k S_k^B)$. Then $\tilde u = S_k^+\psi$ solves the homogeneous problem \eqref{eq:homogenous_problem} and $\tilde u = 0$.
Since $\frac{\p \tilde u}{\p x_3} = -\frac 12 \psi = 0$, we conclude that \eqref{eq:potential} is uniquely solvable.

Finally, $u$ is clearly a solution to the Helmholtz equation for $x_3>0$. 
Moreover, at the boundary we have
\begin{equation*}
	\lim_{x_3\to 0}\left(\frac{\p u}{\p x_3} + \rf_k u\right) = -\frac 12 \phi + \rf_k S^B_k \phi + \rf_k u_{in} = 0
\end{equation*}
according to \eqref{eq:boundary_limit} and \eqref{eq:potential}. This yields the result.
\end{proof}
\begin{corollary}
	\label{cor:born_series}
	Let us denote $\phi_1 = 2 \rf_k u_{in} \in L^2(D)$
	and define an iterative scheme for each $n\geq 1$ by setting
	\begin{eqnarray*}
		\phi_{n+1} & = & 2 \rf_k S^B_k (\phi_n) \quad {\rm and} \\
		u_n & = & S_k^+ \phi_n.
	\end{eqnarray*}
	There is a random index $k_0 = k_0(\omega)$ such that $k_0<\infty$ almost 	surely and, if $k\geq k_0$ then the Born series  
	\begin{equation}
		\label{eq:born_series}
		u(x; y,k) = u_{in}(x; y,k) + u_1(x;y,k) + u_2(x; y,k) + ...
	\end{equation}
	converges pointwise for any $x,y\in \md$ to the function defined in Theorem \ref{thm:sol}.
\end{corollary}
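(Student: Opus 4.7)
The plan is to recognize the iterates $\phi_n$ as the successive terms of the Neumann series for the inverse of $\tfrac12 - \rf_k S^B_k$. Writing $T_k := 2\rf_k S^B_k$, equation \eqref{eq:potential} becomes $(I - T_k)\phi = \phi_1$, and the recursion $\phi_{n+1} = T_k \phi_n$ gives $\phi_n = T_k^{n-1}\phi_1$. Thus once $\norm{T_k}_{L^2(D)\to L^2(D)} < 1$, the series $\sum_{n\geq 1}\phi_n$ converges in $L^2(D)$ to the unique solution $\phi$ of \eqref{eq:potential} furnished by Theorem \ref{thm:sol}.

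To estimate the operator norm I would factor $T_k$ as the composition of multiplication by $\rf_k$ with $\chi_D S^B_k$. The H\"older regularity of realizations (established earlier in the paper) together with the compact support of $\rf$ in $D$ makes the random variable $M(\omega) := \norm{\rf(\omega)}_{L^\infty(D)}$ almost surely finite. Combining this with assumption (A4) and Proposition \ref{prop:norm_estimate},
\[
\norm{T_k}_{L^2(D)\to L^2(D)} \;\leq\; 2\norm{\rf_k}_{L^\infty(D)}\,\norm{\chi_D S^B_k}_{L^2(D)\to L^2(D)} \;\leq\; \frac{2CM(\omega)}{k^{p+1/2}}.
\]
Since $p+\tfrac12 > 0$, the random threshold
\[
k_0(\omega) := \inf\!\bigl\{k \geq 1 \;:\; 2CM(\omega)k^{-p-1/2} \leq \tfrac12 \bigr\}
\]
is finite almost surely, and for every $k \geq k_0(\omega)$ the geometric bound $\norm{\phi_n}_{L^2(D)} \leq 2^{-(n-1)}\norm{\phi_1}_{L^2(D)}$ holds and $\sum_n \phi_n$ converges absolutely in $L^2(D)$.

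The last step is to upgrade $L^2$-convergence of $\{\phi_n\}$ to pointwise convergence of the Born series. Fix $x,y \in \md$. Assumption (A3) together with $x_3 > 0$ guarantees $\mathrm{dist}(x, D) > 0$, so the kernel $z \mapsto g_k(x-z)$ is smooth on $\overline{D}$ and in particular lies in $L^2(D)$. Cauchy--Schwarz then gives
\[
|u_n(x;y,k)| \;\leq\; \norm{g_k(x-\cdot)}_{L^2(D)}\, \norm{\phi_n}_{L^2(D)} \;\leq\; C_{x,k}\, 2^{-(n-1)}\norm{\phi_1}_{L^2(D)},
\]
yielding absolute convergence of $\sum u_n$ and, by linearity of $S_k^+$, the identity $u_{in} + \sum_{n\geq 1} u_n = u_{in} + S_k^+\phi = u$ from Theorem \ref{thm:sol}. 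The main obstacle is precisely the randomness of the contraction rate through $M(\omega)$: the frequency scaling $p > \epsilon + \tfrac12$ in (A4) is imposed exactly so that $k_0$ is almost surely finite, while (A3) is what lets one pass from $L^2(D)$ convergence of densities to pointwise convergence of the scattered field on $\md$.
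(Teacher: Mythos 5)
Your proposal is correct and follows essentially the same path as the paper: identify the iterates as the Neumann series for $(\tfrac12 - \rf_k S^B_k)^{-1}$, bound $\|2\rf_k S^B_k\|_{L^2\to L^2}$ via the a.s.\ boundedness of multiplication by $\rf$ together with Proposition~\ref{prop:norm_estimate}, use (A4) to make the factor $k^{-p-1/2}$ eventually small so that $k_0(\omega)<\infty$ a.s., and then upgrade to pointwise convergence on $\md$ via (A3) (which is exactly what makes $S_k^+:L^2(D)\to C(\md)$ bounded). The only cosmetic difference is that the paper keeps the explicit per-iteration $k$-decay $\sup_{x,y}|u_n|\leq C_1C_2^{n-1}k^{-\frac12(n-1)-np}$, which it reuses later in the proof of Theorem~\ref{thm:main_result}, whereas you collapse it to a geometric ratio $1/2$ once $k\geq k_0(\omega)$.
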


\begin{proof}
Let us write
$$\phi_n = 2^n k^{-pn} (M_\rf S^B_k)^{n-1} (\lambda u_{in}),$$
where we use notation $M_\rf : f\mapsto \lambda f$.
Since $\lambda \in C^{\alpha}(\dom)$ almost surely, the multiplication $M_\lambda : L^2(\dom) \to L^2(\dom)$ is bounded by a finite constant almost surely. Clearly, the operator $S_k^+ : L^2(\dom) \to C(\md)$ is bounded.
Due to Proposition \ref{prop:norm_estimate} it follows that 
\begin{equation}
	\label{eq:sup_un}
	\sup_{x,y\in\md}|u_n(x; y,k)| \leq C_1 C_2^{n-1} k^{-\frac 12(n-1)-np},
\end{equation}
where $C_1$ and $C_2=C_2(\omega)$ are the norm bounds of $S_k^+$ and $M_\rf S^B_k$, respectively. We point out that $C_1$ and $C_2$ are independent of $y$.
Now it follows that 
\begin{equation*}
	\sum_{n=1}^\infty \sup_{x,y\in \md} |u_n(x,y,k)| 
	\leq C_1 \frac{k^{-p}}{1-C_2 k^{-p}} \leq 2 C_1 k^{-p},
\end{equation*}
when $C_2 k^{-p} \leq \frac 12$. Consequently, 
there is a random index $k_0=k_0(\omega)$ such that $k_0<\infty$ almost surely, and for any $k\geq k_0$ the pointwise convergence in \eqref{eq:born_series} holds.
\end{proof}

%

\section{Analysis of the Born approximation}
\label{sec:born}

\subsection{Correlation between different wavelengths}

According to Corollary \ref{cor:born_series} the first order term in the Born series satisfies
\begin{equation*}
	u_1(x,y,k) = 2 S^+_k (\rf_k u_{in}(\cdot,y))(x)
	= \frac{1}{4\pi^2k^p} \int_{\plane} \frac{\exp(ik(|x-z|+|y-z|))}{|x-z||y-z|} \rf(z) dz,
\end{equation*}
since $u_{in}(z,y) = 2 g_k(z-y)$ for any $z\in \plane$.
We denote the correlation function between the Born approximation at different wavelengths by
\begin{eqnarray}
\label{eq:integral}
\I(x,y,k_1,k_2) & = & \expec\left( u_1(x,y,k_1) \overline{u_1(x,y,k_2)}\right) \nonumber\\
 & = & \frac 1{(4\pi^2)^2k_1^p k_2^p}\int_{\plane \times \plane}
 \frac{\exp(i\left(k_1\phi(z_1; x,y)-k_2\phi(z_2; x,y)\right))}{|x-z_1|\,|z_1-y|\,|x-z_2|\,|z_2-y|} 
c_\rf(z_1,z_2)\,dz_1dz_2,
\end{eqnarray}
where 
\begin{equation}\label{eq:phi}
\phi(z; x,y)=|x-z|+|z-y|.
\end{equation}
Below we introduce multiple coordinate transformations that allow the use of microlocal methods later in our analysis. Notice that we identify $\plane$ with $\R^2$. In the process, e.g., $\phi(\cdot\; ; x,y)$, $x,y\in \md$ is considered on $\R^2$, although the distance taken in \eqref{eq:phi} is in $\R^3$.

\subsubsection{Reparametrization by $\tau_{(x,y)}$}

Let us consider the phase in the integral \eqref{eq:integral}. A simple calculation shows that
\begin{equation}
	\label{eq:simpletrick}
	k_1 \phi_1 - k_2 \phi_2 
	= \frac{(\phi_1 - \phi_2)}{2}(k_1+k_2) + \frac{(\phi_1+ \phi_2 )}{2}(k_1-k_2)
\end{equation}
where $\phi_j = \phi(z_j,x,y)$, $j=1,2$.
In the following we introduce a reparametrization to \eqref{eq:integral} so that the pair $(\phi_1\pm \phi_2)/2$ play the role of two coordinates. The benefit of this change is that the dependency of \eqref{eq:integral} on the difference $k_1-k_2$ can be explicitly analysed. What is more, once we study the case $k_1=k_2$ the first part in \eqref{eq:simpletrick} controls the high frequency limit as the second part vanishes.

We denote this change of coordinates by $\tau$ and define it as a composition of two mappings. Notice carefully that $\tau$ will depend also on $x$ and $y$. 

First, denote by $\eta: \R^2 \times \R^2 \to \R^2 \times \R^2$ the mapping
\begin{equation}
	\label{eq:eta}
	\eta(v,w) = \frac 12(v+w,-v+w).
\end{equation}
We notice that $\eta^{-1}(z_1,z_2) = (z_1-z_2, z_1+z_2)$ and that for the Jacobian of $\eta$ we have ${\rm det}(J\eta) = \frac 12$.

For the second transformation consider 
the level set 
\begin{equation}
	\label{eq:phi_levelset}
	E_t = \{z\in\R^2 \;|\; \phi(z,x,y) = t\}, \quad t>0,
\end{equation}
fixed points $x,y\in\R^2$. In fact, $E_t$ describes an ellipse with focal points $x$ and $y$ and a semi-major axis $t/2$.
The idea is to parametrize $\R^2$ in terms of the ellipses $E_t$, $t\geq 0$. 
We define $\rho_{(x,y)}:\R^2\times \R^2 \to \R^2 \times \R^2$ by
\begin{equation*}
	\rho_{(x,y)}(z_1,z_2) = (\tilde \rho_{(x,y)}(z_1),\tilde \rho_{(x,y)}(z_2))
\end{equation*} 
where
\begin{equation}
\label{eq:tilderho}
\tilde \rho_{(x,y)}(z) =  \frac 1 2 \phi(z,x,y)
\begin{pmatrix}
1 \\
\hbox{arcsin}\left(e_1\cdotp f(z,x,y)\right)
\end{pmatrix} \in \R^2
\end{equation}
where $e_1 = (1,0)^\top \in \R^2$ and
$$f(z,x,y) = \frac{\nabla_{z} \phi(z; x,y)}{\|\nabla_{z} \phi(z; x,y)\|}.$$
To sum up, the first component in \eqref{eq:tilderho} corresponds to the semi-major axis of
the ellipse $E_{\phi(z,x,y)}$. The second component specifies the angle of the normal vector of the
ellipse with $e_1$ at the
point $z$. 
\begin{remark}
\label{re:rho_wellposed}
Suppose $x',y'\in \R^2$ are the projections of $x,y\in\md$. It turns out that $\tilde \rho_{(x,y)}(z)$ is constant on the segment $L_{x',y'} = \{tx'+(1-t)y'\; | \; t\in [0,1]\}$. 
Recall that we exclude the possible singularities 
in the coordinates obtained by $\rho_{(x,y)}$ by assumption in Theorem \ref{thm:data} (i).
From this point on, we assume that $L_{x',y'} \subset \plane \setminus \overline{\dom}$ holds.
\end{remark}
We are ready to define $\tau_{(x,y)} : \R^2\times \R^2 \to \R^2 \times \R^2$ by
\begin{equation*}
	\tau_{(x,y)} = \rho_{(x,y)}^{-1} \circ \eta.
\end{equation*}
Note how the first components transform: if $(v,w) =\tau_{(x,y)}^{-1}(z_1,z_2)$ then 
$$v_1 = \frac 12 (\phi(z_1,x,y)-\phi(z_2,x,y)) \quad {\rm and} \quad w_1 = \frac 12 (\phi(z_1,x,y)+\phi(z_2,x,y)).$$
In consequence, by \eqref{eq:simpletrick} we obtain
\begin{equation*}
	k_1\phi_1 -k_2\phi_2 = (k_1+k_2)v_1 +(k_1-k_2)w_1.
\end{equation*}

\subsubsection{Representation formula and asymptotics}

By definition, the correlation function $c_\rf$ is the Schwartz kernel 
of a pseudodifferential operator $C_\rf$ with a classical symbol 
$\s(x,\xi) \in S^{-2-2\epsilon}_{1,0}(\R^2\times \R^2)$.
Moreover, the support of $c_\rf$ is contained in $\dom \times \dom$.
We can write $c_\rf$ in terms of its symbol by
\begin{equation}
	\label{eq:rep1}
	c_\rf(z_1, z_2) = (2\pi)^{-2} \int_{\R^2} e^{i(z_1-z_2)\cdot \xi} \s(z_1,\xi)d\xi
\end{equation}
All symbols considered here are classical symbols $S^t_{1,0}$, $t\in\R$ \cite{Hor3}. 

Let us shortly revisit conormal distributions of H\"ormander type \cite{Hor3}.
If $X \subset \R^n$ is an open set and $S\subset X$
is a smooth submanifold of $X$, we denote by $I(X ; \; S)$ the distributions in
${\mathcal D}'(X)$ that are smooth in $X \setminus S$ and have a conormal 
singularity at $S$.
Consequently, by equation \eqref{eq:rep1} the correlation function $c_\rf$ is a conormal distribution 
in $\R^4$ of H\"ormander type having conormal singularity on the surface
$S = \{(z_1,z_2) \in \R^4\; |\; z_1 - z_2 = 0\}.$
Moreover, the set of distributions supported in a compact subset of $X$
is denoted by $I_{comp}(X; \; S)$. 
%

Below, we transform symbols on the plane in ways that depend on measurement points $x,y \in \hs$. In order to establish uniform estimates and claims also with respect to variables $x$ and $y$ we extend the covariance functions into space 
$\cspace = \plane \times \plane \times \hs \times \hs$ by 
\begin{equation}
\label{eq:extend_cov}
	c_\rf(z_1,z_2; x,y) = c_\rf(z_1,z_2) \chi(x) \chi(y),
\end{equation}
where $\chi \in C^\infty_0(\hs)$ satisfies $\chi = 1$ in $\md$ and the projection of ${\rm supp}(\chi)$ to $\plane$ is disjoint to $\dom$. Similarly, the 
surface $S$ is extended by
\begin{equation*}
	S = \{(z_1, z_2; x,y) \in \cspace \; | \; z_1-z_2 = 0\}.
\end{equation*}
Below, we use notation $\tau({\bf x}) = \tau_{(x,y)}(z_1,z_2)$ for any ${\bf x} = (z_1,z_2,x,y) \in \cspace$.
\begin{lemma}
\label{lem:representation}
There exists a unique symbol $\tilde\s = \tilde\s(w,\xi ; x,y) \in S^{-2-2\epsilon}_{1,0}(\cspace)$ such that
\begin{equation}
	\label{eq:fourierrepresentation}
	\I(x,y,k_1,k_2) = \frac 1{4 \pi^2 k_1^p k_2^p}({\mathcal F}_w \tilde\s)\left(\vect{k_2-k_1}0, -\vect{k_1+k_2}0 ; x,y\right).
\end{equation}
Moreover, $\tilde\s$ is compactly supported in the $(w,x,y)$ variables.
\end{lemma}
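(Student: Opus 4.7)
The plan is to apply the change of variables $\tau_{(x,y)}$ inside \eqref{eq:integral}, which linearizes the oscillatory phase in $v_1,w_1$, and then to read off $\tilde\s$ from the oscillatory integral representation of the conormal distribution $c_\rf$ expressed in the new coordinates. The $v$-integration will collapse to a delta distribution, pinning $\xi$ to a specific value and producing the claimed partial Fourier transform in $w$.

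First I would substitute $(z_1,z_2)=\tau_{(x,y)}(v,w)$ in \eqref{eq:integral}. By Remark~\ref{re:rho_wellposed} and assumption (A3), the degenerate segment $L_{x',y'}$ is disjoint from $\dom$ whenever $x,y\in\md$, so $\tau_{(x,y)}$ is a smooth diffeomorphism on an open neighbourhood of the integration region, with smooth non-vanishing Jacobian. Using the phase identity \eqref{eq:simpletrick}, namely $k_1\phi_1-k_2\phi_2=(k_1+k_2)v_1+(k_1-k_2)w_1$, and defining the smooth amplitude
\[
A(v,w;x,y) := \chi(x)\chi(y)\,\bigl|\det D\tau_{(x,y)}(v,w)\bigr|\prod_{j=1,2}\bigl(|x-z_j(v,w)|\,|z_j(v,w)-y|\bigr)^{-1},
\]
which is compactly supported in $(w,x,y)$ (the cutoff $\chi$ gives compactness in $(x,y)$, while $\tau_{(x,y)}^{-1}$ maps $\dom\times\dom$ into a uniformly bounded set), the substitution turns \eqref{eq:integral} into
\[
\I(x,y,k_1,k_2) = \frac{C}{k_1^p k_2^p}\int e^{i((k_1+k_2)v_1+(k_1-k_2)w_1)}\,c_\rf(\tau_{(x,y)}(v,w))\,A(v,w;x,y)\,dv\,dw.
\]

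Next I would invoke the theory of conormal distributions. Because $c_\rf$ is the Schwartz kernel of a classical pseudodifferential operator of order $-2-2\epsilon$, it is a conormal distribution on the diagonal $\{z_1=z_2\}\subset \R^4$. Since $\tau_{(x,y)}$ maps $\{v=0\}$ diffeomorphically onto this diagonal while depending smoothly on the parameters $(x,y)$, invariance of classical symbol classes under smooth changes of variables \cite[Thm.~18.2.9]{Hor3} yields a unique symbol $\tilde\s(w,\xi;x,y)\in S^{-2-2\epsilon}_{1,0}(\cspace)$, compactly supported in $(w,x,y)$, with
\[
c_\rf(\tau_{(x,y)}(v,w))\,A(v,w;x,y) = (2\pi)^{-2}\int_{\R^2} e^{iv\cdot\xi}\,\tilde\s(w,\xi;x,y)\,d\xi.
\]
Uniqueness of $\tilde\s$ modulo $S^{-\infty}$ follows from the bijective correspondence between conormal distributions and their full symbols, and the $S^{-\infty}$ ambiguity is eliminated by the compact support. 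I expect this step to be the main obstacle: although the general theory is standard, one must carefully track the symbol order and the compactness of support through the diffeomorphism and the multiplication by the smooth factor $A$, uniformly for $(x,y)$ in a compact subset of $\md\times\md$.

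Finally, inserting the oscillatory integral representation into the transformed expression for $\I$ and performing the $v$-integration yields a factor $(2\pi)^2\,\delta\!\left(\xi+\vect{k_1+k_2}{0}\right)$. Integration in $\xi$ then pins $\xi=-\vect{k_1+k_2}{0}$, leaving
\[
\I(x,y,k_1,k_2) = \frac{1}{4\pi^2 k_1^p k_2^p}\int e^{-iw\cdot\vect{k_2-k_1}{0}}\,\tilde\s\!\left(w,-\vect{k_1+k_2}{0};x,y\right) dw,
\]
which is precisely \eqref{eq:fourierrepresentation}; the overall normalisation constant is fixed by matching with the convention for $\Ft_w$ and, if necessary, absorbing a scalar factor into $\tilde\s$, which does not alter its symbol class.
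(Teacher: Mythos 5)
Your proposal follows essentially the same route as the paper: change variables by $\tau_{(x,y)}$ to linearize the phase via \eqref{eq:simpletrick}, recognize $c_\rf$ (and hence its pullback times the smooth Jacobian/distance factor, which is exactly the paper's $H_{(x,y)}$ augmented by the cutoffs from \eqref{eq:extend_cov}) as a compactly supported conormal distribution on $\{v=0\}$, invoke H\"ormander's representation theorem to write it as $\int e^{iv\cdot\xi}\tilde\s(w,\xi;x,y)\,d\xi$, and then integrate out $v$ to produce the delta pinning $\xi=-(k_1+k_2)e_1$ and the partial Fourier transform in $w$. The one small imprecision is the remark that ``the $S^{-\infty}$ ambiguity is eliminated by the compact support''---in fact the symbol is determined exactly (not just modulo $S^{-\infty}$) as the partial Fourier transform of the conormal distribution in $v$; compact support in $(w,x,y)$ is what makes the resulting integrals well-defined and uniform, not what resolves a symbol ambiguity---but this does not affect the validity of the argument.
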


\begin{proof}
Let ${\bf D} \subset \cspace$ be an open set containing $\dom \times \dom \times {\rm supp}(\chi) \times {\rm supp}(\chi)$ so that 
$c_\rf \in I_{comp}({\bf D}; \; S \cap {\bf D})$. Below, we use the fact that conormal distributions
are invariant in change of coordinates. 
We have that
\begin{equation}
\label{eq:c_tau}
c_\tau:=\tau^*(c_\rf)\in I_{comp}(\tau^{-1}({\bf D});\tilde S
\cap \tau^{-1}({\bf D})),
\end{equation}
where $\tilde S = \tau^{-1}(S)=\{(v,w,x,y)\in\cspace\; |\; v=0\}$.
With this change of coordinates we get
\begin{multline} 
\label{eq:I-form}
\I(x,y,k_1,k_2) = \frac 1{(4\pi^2)^2k_1^p k_2^p}\int_{\R^4} \exp\{i((k_1+k_2)e_1\cdotp v+(k_1-k_2)e_1\cdotp w
)\}\\
 \cdot c_\tau(v,w; x,y)H_{(x,y)}(v,w)\,dvdw
\end{multline}
where $e_1=(1,0)^\top$ is the unit vector
and
\begin{equation}
	\label{eq:Hdef}
H_{(x,y)}(v,w)=\frac{\hbox{det}\,(J\tau_{(x,y)}(v,w))}{|x-z_1||z_1-y||x-z_2||z_2-y|},
\end{equation}
where $(z_1,z_2)=\tau_{(x,y)}(v,w)$
and $J\tau_{(x,y)}$ is the Jacobian of $\tau_{(x,y)}$ with respect to $(v,w)$. 
Since $H$ is smooth in $\tau^{-1}({\bf D})$ in all variables
and the class $I_{comp}(\cspace;\;\tilde S)$ is closed in multiplication
with a smooth function, we have
$c_\tau \cdot H \in I_{comp}(\cspace;\;\tilde S)$.
Using the
representation theorem of conormal distribution \cite[ Lemma 18.2.1]{Hor3},
we obtain
\begin{equation}
\label{C5-esitys B}
c_\tau(v,w; x,y) H_{(x,y)}(v,w)=\int_{\R^2}e^{iv\cdotp \xi} \tilde\s(w,\xi; x,y)\,d\xi,
\end{equation}
where
\begin{equation}
\label{eq:c5_asymp}
\tilde\s(w,\xi; x,y)
\sim \sum_{l=0}^\infty \bra -iD_v,D_\xi\cet^l
(\sigma_\tau(w,\xi; x,y)H_{(x,y)}(v,w))|_{v=0} \in S^{-2-2\epsilon}_{1,0}(\cspace).
\end{equation}
Above, $\sigma_\tau$ is the symbol of $c_\tau$.
The equality \eqref{eq:fourierrepresentation} is obtained by combining equation (\ref{C5-esitys B}) with \eqref{eq:I-form} and applying the Fourier inversion formula. The compact support of $\tilde \s$ follows from definition \eqref{eq:extend_cov} and compact support of $c_\tau$ in \eqref{C5-esitys B}.
\end{proof}

\begin{lemma}
	For $k_1, k_2 \geq 1$ the random variable
	$u_1$ satisfies uniformly for $x,y \in \md$ the estimate
	\begin{eqnarray}
		\label{eq:k1k2A}
		|\I(x,y,k_1,k_2)| & \leq & \frac{C_n}{k_1^pk_2^p(k_1+k_2)^{2+2\epsilon} (1 + |k_1-k_2|)^n} \\
		\label{eq:k1k2B}
		|\expec( u_1(x,y,k_1) u_1(x,y,k_2))| & \leq &
		C_n' (k_1+k_2)^{-n}k_1^{-p}k_2^{-p} 
	\end{eqnarray}
	where $n$ is arbitrary. 
\end{lemma}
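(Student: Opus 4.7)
The plan is to derive both bounds from Lemma \ref{lem:representation} by pairing the compact support of $\tilde\s$ in $w$ (to get rapid decay in its Fourier conjugate variable) with the symbol bound $\tilde\s\in S^{-2-2\epsilon}_{1,0}(\cspace)$ (to get the $(1+|\xi|)^{-2-2\epsilon}$ decay in the other slot).

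First I would prove the general abstract estimate
\begin{equation*}
|(\Ft_w\tilde\s)(\eta,\xi;x,y)|\leq \frac{C_n}{(1+|\eta|)^n(1+|\xi|)^{2+2\epsilon}},\qquad n\in\mathbb{N},
\end{equation*}
uniformly in $x,y\in\md$. This is obtained by integration by parts: since $\tilde\s$ is compactly supported in $w$ (with a support independent of $\xi,x,y$ by \eqref{eq:extend_cov} and the construction in Lemma \ref{lem:representation}), we have $\eta^\alpha(\Ft_w\tilde\s)(\eta,\xi;x,y)=(-i)^{|\alpha|}\Ft_w(\p_w^\alpha\tilde\s)(\eta,\xi;x,y)$, and $|\p_w^\alpha\tilde\s(w,\xi;x,y)|\leq C_\alpha(1+|\xi|)^{-2-2\epsilon}$ by the symbol class estimate. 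Uniformity over $x,y\in\md$ follows because $\md$ lies inside the compact set $\{\chi=1\}$ and the $S^{-2-2\epsilon}_{1,0}$ seminorms are continuous in the $(x,y)$ parameters.

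For \eqref{eq:k1k2A} I would plug $\eta=(k_2-k_1,0)$ and $\xi=-(k_1+k_2,0)$ into \eqref{eq:fourierrepresentation}: then $|\eta|=|k_1-k_2|$ and $|\xi|=k_1+k_2\geq 2$ since $k_1,k_2\geq 1$. Applying the abstract bound above with this choice yields
\begin{equation*}
|\I(x,y,k_1,k_2)|\leq \frac{C_n}{4\pi^2\,k_1^p k_2^p\,(1+|k_1-k_2|)^n (k_1+k_2)^{2+2\epsilon}},
\end{equation*}
which is \eqref{eq:k1k2A}.

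For \eqref{eq:k1k2B} I would redo the derivation of Lemma \ref{lem:representation} with $+k_2\phi_2$ in place of $-k_2\phi_2$. Using $k_1\phi_1+k_2\phi_2=(k_1-k_2)v_1+(k_1+k_2)w_1$ in the coordinates of \S\ref{sec:born} and repeating the steps leading to \eqref{eq:I-form}--\eqref{eq:c5_asymp} gives
\begin{equation*}
\expec(u_1(x,y,k_1)u_1(x,y,k_2))=\frac{1}{4\pi^2 k_1^p k_2^p}(\Ft_w\tilde\s)\!\left(-\vect{k_1+k_2}{0},\,\vect{k_2-k_1}{0};x,y\right).
\end{equation*}
Now the roles of the two arguments are interchanged: the Fourier-conjugate variable has magnitude $k_1+k_2$ while $\xi$ has magnitude $|k_1-k_2|$. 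Applying the abstract estimate and absorbing $(1+|k_1-k_2|)^{-2-2\epsilon}\leq 1$ into the constant gives \eqref{eq:k1k2B}. No step is genuinely hard: the only delicate point is bookkeeping the compact support of $\tilde\s$ in $w$ (guaranteed by the cutoff $\chi$ in \eqref{eq:extend_cov} together with the compact support of $c_\rf$) so that the integration-by-parts argument produces constants that are uniform in $x,y\in\md$.
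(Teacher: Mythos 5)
Your proposal is correct and takes essentially the same approach as the paper: integration by parts against the compactly supported $w$-dependence combined with the $S^{-2-2\epsilon}_{1,0}$ symbol bound, then plugging in the two frequency arguments of \eqref{eq:fourierrepresentation}. For \eqref{eq:k1k2B} the paper simply notes that the derivation of Lemma~\ref{lem:representation} permits $k_2<0$, which is the shorthand for exactly the sign-flipped $+k_2\phi_2$ computation you carry out explicitly.
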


\begin{proof}
Since $\tilde\s \in S^{-2-2\epsilon}_{1,0}(\cspace)$ is compactly supported in $(w,x,y)$-variables, we have by definition
\begin{equation}
	\label{eq:diffsymbolappr}
	|D^\alpha_w \tilde\s(w,\xi;x,y)| \leq C_\alpha (1+|\xi|)^{-2-2\epsilon}
\end{equation}
for all $|\alpha| \geq 0$, where $C_\alpha$ is independent of $(w,x,y) \in \R^2 \times \md \times \md$.
For $k_1\neq k_2$ this implies after $n$ integrations by parts
\begin{equation*}
|\I(x,y,k_1,k_2)|\leq
C_nk_1^{-p}k_2^{-p}(1+|k_1+k_2|^{2+2\epsilon})^{-1}|k_1-k_2|^{-n}
\end{equation*}
for all $n\geq 0.$ Including the case $k_1=k_2$ yields the estimate (\ref{eq:k1k2A}).

The second estimate \eqref{eq:k1k2B} follows analogously to \eqref{eq:k1k2A} since the proof of Lemma \ref{lem:representation} allows $k_2$ to be negative.
\end{proof}

The proof of the next corollary follows by identical arguments to \cite[Cor. 1]{LPS}.

\begin{corollary}
\label{cor:estimate}
Assume that $k_1,k_2>1$ and $x,y \in \md$. Then
\begin{equation*}
	\expec \left| {\rm Re}(k_1^{1+\epsilon+p} u_1(x,y,k_1)) {\rm Re}(k_2^{1+\epsilon+p} u_1(x,y,k_2))\right|
	\leq C_n (1 + |k_1-k_2|)^{-n}, \quad n>0,
\end{equation*}
where $C_n$ is independent of $x$ and $y$, and one may replace one or both of the real parts
by imaginary parts.
\end{corollary}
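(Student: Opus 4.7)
The plan is to reduce the claim to the two second-moment bounds \eqref{eq:k1k2A}--\eqref{eq:k1k2B} of the preceding lemma by a polarization identity. Setting $A_j := k_j^{1+\epsilon+p} u_1(x,y,k_j)$, the pair $(A_1,A_2)$ is jointly Gaussian and centered (since $\rf$ is centered Gaussian and $u_1$ depends linearly on $\rf$), so the quantity of interest is a real-linear combination of the controlled second moments $\expec(A_1 \bar A_2) = (k_1k_2)^{1+\epsilon+p}\I(x,y,k_1,k_2)$ and $\expec(A_1 A_2)$. The identity
\[
\mathrm{Re}(z_1)\mathrm{Re}(z_2) = \tfrac{1}{2}\mathrm{Re}(z_1 \bar z_2) + \tfrac{1}{2}\mathrm{Re}(z_1 z_2),
\]
combined with the triangle inequality, shows it suffices to bound $|\expec(A_1 \bar A_2)|$ and $|\expec(A_1 A_2)|$ individually by $C_n(1+|k_1-k_2|)^{-n}$. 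The analogues with one or both $\mathrm{Re}$'s replaced by $\mathrm{Im}$'s introduce only sign changes in the polarization identity and yield the stated extension.

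For the first piece, \eqref{eq:k1k2A} together with AM-GM, $(k_1 k_2)^{1+\epsilon} \leq \bigl((k_1+k_2)/2\bigr)^{2+2\epsilon}$, gives
\[
\bigl|\expec(A_1 \bar A_2)\bigr| \leq C_n\,\frac{(k_1 k_2)^{1+\epsilon}}{(k_1+k_2)^{2+2\epsilon}(1+|k_1-k_2|)^n} \leq \frac{C_n}{(1+|k_1-k_2|)^n}.
\]
For the second, applying \eqref{eq:k1k2B} with the free exponent $n+2+2\epsilon$ in place of $n$ and using $k_1+k_2 \geq 2 + |k_1-k_2|$ (valid whenever $k_1,k_2\geq 1$) yields
\[
\bigl|\expec(A_1 A_2)\bigr| \leq C'_n (k_1 k_2)^{1+\epsilon}(k_1+k_2)^{-n-2-2\epsilon} \leq \frac{C'_n}{(1+|k_1-k_2|)^n}.
\]
Uniformity of the constants in $x,y \in \md$ is inherited from the preceding lemma.

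There is essentially no substantive obstacle — the analytic content sits entirely in the preceding lemma, and the present step is polarization plus exponent bookkeeping. The one item to keep straight is that the normalizing prefactors $k_j^{1+\epsilon+p}$ exactly cancel the $k_j^{-p}$ appearing in the Born normalization of $u_1$, leaving the residual factor $(k_1 k_2)^{1+\epsilon}$ which is then dominated by a power of $k_1+k_2$ via AM-GM, and $k_1+k_2$ in turn is bounded below by $1+|k_1-k_2|$. The entire argument is parallel to Cor.~1 of \cite{LPS}, as the text asserts.
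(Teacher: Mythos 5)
Your polarization reduction plus the two moment bounds \eqref{eq:k1k2A}--\eqref{eq:k1k2B} from the preceding lemma is exactly the route the paper intends; it simply cites \cite{LPS} without further detail, and your exponent bookkeeping (AM--GM to absorb $(k_1 k_2)^{1+\epsilon}$ into a power of $k_1+k_2$, then $k_1+k_2 > 1+|k_1-k_2|$ for $k_1,k_2>1$) is sound and uniform in $x,y\in\md$.

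One subtlety is worth flagging. Writing $A_j = k_j^{1+\epsilon+p}u_1(x,y,k_j)$, what your argument establishes is
\[
\bigl| \expec\bigl( {\rm Re}(A_1)\,{\rm Re}(A_2)\bigr)\bigr| \leq C_n(1+|k_1-k_2|)^{-n},
\]
with the absolute value \emph{outside} the expectation, whereas the corollary as printed has it \emph{inside}. These are not equivalent: for centered jointly Gaussian $X,Y$ with variances of order one, $\expec|XY| \to \tfrac{2}{\pi}\sigma_X\sigma_Y > 0$ as the correlation tends to zero, so the inequality with the absolute value inside cannot hold with decay in $|k_1-k_2|$. The placement is evidently a misprint (the same slip recurs in the proof of Proposition~\ref{prop:born_conv}): the ergodic Theorem~\ref{thm:ergodic} needs only $|\expec X_t X_{t+r}|$, and the Gaussian identity $\expec((X_1^2-\expec X_1^2)(X_2^2-\expec X_2^2)) = 2(\expec X_1 X_2)^2$ consumes $\expec({\rm Re}A_1\,{\rm Re}A_2)$, not $\expec|{\rm Re}A_1\,{\rm Re}A_2|$. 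Your proof is therefore correct for the statement as intended and used, but you should say explicitly that you are proving decay of $|\expec(\cdot)|$ rather than of the literal $\expec|\cdot|$, since the latter is false as written.
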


\subsection{Asymptotics of the correlation}

In the following we introduce a useful reparametrization of the symbol $\tilde\s$ that allows us to study
its principal symbol and, consequently, the asymptotics related to \eqref{eq:integral}.
We define $\kappa_{(x,y)} : \R^2\times \R^2 \to \R^2 \times \R^2$ by
\begin{equation*}
	\kappa_{(x,y)} = \eta^{-1} \circ \tau_{(x,y)}.
\end{equation*}
We frequently use notation $\kappa({\bf x}) = \kappa_{(x,y)}(z_1,z_2)$ for ${\bf x} = (z_1,z_2,x,y) \in \cspace$.
Let us now decompose the coordinate transform $\kappa = (\kappa_1,\kappa_2)$.
and proceed by decomposing also the
Jacobian $J\kappa$ with similar notation. The corresponding Jacobian is given by
\begin{equation}
	\label{eq:kappa_decomposition}
J\kappa=\left(\begin{array}{cc}\kappa_{11} & \kappa_{12}\\
\kappa_{21} & \kappa_{22}\\
\end{array}\right)
:=\left(\begin{array}{cc}J_v\kappa_{1} & J_v\kappa_{2}\\
J_w\kappa_{1} & J_w\kappa_{2}\\
\end{array}\right),
\end{equation}
where we have
\begin{equation*}
\kappa_{11} := J_v \kappa_1 = \left(\begin{array}{cc} \partial_{v_1} V_1 & \partial_{v_2} V_1\\
\partial_{v_1} V_2 & \partial_{v_2} V_2\\
\end{array}\right) \in \R^{2\times 2}
\quad {\rm for} \quad
\kappa_1 = (V_1,V_2)
\end{equation*}
with similar definition for terms $J_v\kappa_2$, $J_w\kappa_1$ and $J_w\kappa_2$.

\begin{lemma}
	The symbol $\tilde \s$ defined in Lemma \ref{lem:representation} has a principal symbol
	\begin{equation}
	\label{c6 principal symbol}
	\tilde \s^{p}(w,\xi; x,y) =
	\frac{b(z,(\kappa_{11}^{-\top} \xi)^0)}{(1+|(A(z)\kappa_{11}^{-\top}\xi|^2)^{1+\epsilon}}
	\left. \cdot \frac{H_{(x,y)}(v,w)}{|\det\kappa_{11}|} \right|_{v=0}
	\end{equation}
	where $z = (\tau_{(x,y)}(v,w))_1$, $\kappa_{11} = \kappa_{11}(v,w; x,y)$ and $H$ is defined by \eqref{eq:Hdef}.
\end{lemma}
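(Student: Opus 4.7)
The plan is to exploit the conormal-distribution machinery already set up in Lemma \ref{lem:representation}. The covariance $c_\rf$ is conormal on $S=\{z_1=z_2\}$ with principal symbol $\sigma^p(z,\eta)=b(z,\eta^0)|\eta|^{-2-2\epsilon}$ coming from Definition \ref{def:aniso_rf}, and after pullback by $\tau$ the product $c_\tau\cdot H_{(x,y)}$ is conormal on $\tilde S=\{v=0\}$ with symbol $\tilde\sigma$ recovered via the representation formula \eqref{eq:c5_asymp}. Only the principal part of this symbol is needed, so it suffices to analyse the leading behaviour of $c_\tau$ near $\tilde S$ and then multiply by the smooth factor $H$ at $v=0$.

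The first step is to compute the leading symbol of $c_\tau$ alone. Since $\kappa=\eta^{-1}\circ\tau$, the phase $(z_1-z_2)\cdot\eta$ in \eqref{eq:rep1} becomes $\kappa_1(v,w;x,y)\cdot\eta$, and $\kappa_1$ vanishes on $\tilde S$. Taylor expanding at $v=0$ gives $\kappa_1(v,w)=\kappa_{11}(0,w)\,v+O(|v|^2)$; the matrix $\kappa_{11}(0,w;x,y)$ is invertible in the region of interest because $\tau_{(x,y)}$ is a diffeomorphism there, which relies on assumption (A3) together with Remark~\ref{re:rho_wellposed}. Substituting this expansion into \eqref{eq:rep1} and performing the linear change of variable $\xi=\kappa_{11}(0,w)^\top\eta$ yields, modulo a symbol of order one lower,
\begin{equation*}
c_\tau(v,w;x,y)=(2\pi)^{-2}\int_{\R^2} e^{iv\cdot\xi}\,\sigma\bigl(z,\kappa_{11}^{-\top}\xi\bigr)\,|\det\kappa_{11}|^{-1}\,d\xi,
\end{equation*}
where $z$ is the common value of $z_1$ and $z_2$ at $v=0$. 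Multiplying by the smooth factor $H_{(x,y)}$ preserves the conormal class, and reading off the $l=0$ contribution of \eqref{eq:c5_asymp} gives
\begin{equation*}
\tilde\sigma^p(w,\xi;x,y)=\sigma^p\bigl(z,\kappa_{11}^{-\top}\xi\bigr)\cdot\frac{H_{(x,y)}(0,w)}{|\det\kappa_{11}(0,w;x,y)|}.
\end{equation*}
Inserting the explicit form of $\sigma^p$ from Definition \ref{def:aniso_rf} produces the claimed expression.

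The main obstacle I expect is the bookkeeping that shows the $O(|v|^2)$ Taylor remainder of $\kappa_1$ and the $l\geq 1$ terms in the asymptotic expansion \eqref{eq:c5_asymp} only contribute at strictly lower symbol order. This is a standard manoeuvre in H\"ormander's calculus—trading factors of $v$ for $\xi$-derivatives via integration by parts lowers the order—but the estimates must be carried out uniformly in the parameters $(x,y)\in\md\times\md$ so that the resulting $\tilde\sigma$ genuinely lies in $S^{-2-2\epsilon}_{1,0}(\cspace)$ with the compact-support properties promised in Lemma \ref{lem:representation}. Everything else amounts to tracking constants under the linear change of frequency variable.
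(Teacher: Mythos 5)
Your argument is correct and reaches the same principal symbol, but the mechanics differ from the paper's in a way worth flagging. The paper factors $\tau=\eta\circ\kappa$, first pulls $c_\rf$ back by the linear map $\eta$ to obtain a conormal distribution $c_\eta$ with a $v$-independent symbol $\tilde\sigma_\eta$ via \cite[Lemma 18.2.1]{Hor3}, and then pulls back by $\kappa$ and invokes the change-of-variable theorem \cite[Thm. 18.2.9]{Hor3} to obtain the rule $\sigma_\tau = |\det\kappa_{11}|^{-1}\tilde\sigma_\eta(\kappa_2,\kappa_{11}^{-\top}\xi)\bigl|_{v=0}$ modulo a term in $S^{-3}_{1,0}$; only then is the explicit principal symbol inserted. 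You collapse these steps: Taylor-expanding the phase $\kappa_1(v,w)=\kappa_{11}(0,w)\,v+O(|v|^2)$ directly (using $\kappa_1(0,w)=0$), making the $w$-dependent linear frequency substitution $\xi=\kappa_{11}(0,w)^\top\zeta$, and reading off the leading term of the resulting oscillatory integral. This is in effect a self-contained re-derivation of the needed special case of Thm.~18.2.9; it buys you freedom from the paper's aside that Thm.~18.2.9 is stated for half-densities, at the cost of having to verify by hand — via integration by parts trading powers of $v$ for $\xi$-derivatives, uniformly in $(x,y)\in\md\times\md$ — that the Taylor remainder and the $l\geq 1$ terms of \eqref{eq:c5_asymp} land in $S^{-3}_{1,0}(\cspace)$. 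You correctly identify this bookkeeping as the principal remaining step and name the right mechanism, so the plan is sound; both routes then multiply by the smooth weight $H_{(x,y)}$ at $v=0$ in the same way.
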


\begin{proof}
Notice that the pull-back $c_\eta=\eta^*(c_\rf)$ belongs to $I(\cspace; \; \tilde S_2)$
where $\tilde S_2=\{(v,w,x,y) \; |\; v=0\}$.
Moreover, a direct substitution shows that
\begin{equation}
\label{c2-esitys}
c_\eta(v,w; x,y) = \int_{\R^2}e^{iv\cdotp \xi}\s_\eta(v,w,\xi; x,y)\,d\xi,
\end{equation}
where $\s_\eta(v,w,\xi; x,y) = \s((\eta(v,w))_1,\xi; x,y)$.
In order to find out how the symbol is transformed in the change of coordinates,
we have to represent $c_\eta$ with a symbol that does not depend on $v$.
Again, 
\cite[Lemma 18.2.1]{Hor3} yields that there exists $\tilde \s_\eta \in S^{-2}_{1,0}(\plane\times \plane)$ such that
\begin{equation}
\label{C2-esitys B}
\s_\eta(v,w; x,y)=\int_{\R^2}e^{iv\cdotp \xi} \tilde \s_\eta(w,\xi; x,y)\,d\xi ,
\end{equation}
where $\tilde \s_\eta$ has an asymptotic expansion
\begin{equation*}
\tilde \s_\eta(w,\xi; x,y)
\sim \sum_{l=0}^\infty \bra -iD_V,D_\xi\cet^l
\s_\eta(v,w,\xi; x,y)|_{v=0}\in S^{-2-2\epsilon}_{1,0}(\plane\times\plane).
\end{equation*}
Recall now that we have an identity $c_\tau=\kappa^* c_\eta$, since $\eta\circ \kappa = \tau$.
Below, we use \cite[Thm. 18.2.9.]{Hor3}
to provide a representation for $c_\tau$. Since $\kappa$ maps $ \widetilde{S}\cap \tau^{-1}({\bf D})$ onto $\tilde S_2\cap \eta^{-1}({\bf D})$, we obtain
\beq\label{c4-esitys}
c_\tau(v,w; x,y)=\int_{\R^2}e^{iv\cdotp \xi}
\s_\tau(w,\xi; x,y)\,d\xi,
\eeq
where $\s_\tau(w,\xi; x,y) \in S^{-2-2\epsilon}_{1,0}(\cspace)$.
Using the decomposition \eqref{eq:kappa_decomposition} we have that the symbol $c_\tau$ satisfies
\begin{equation} \label{princ. symb.}
\s_\tau(w,\xi; x,y)
= \frac{1}{|\hbox{det}\kappa_{11}|} \tilde \s_\eta(\kappa_2,\kappa_{11}^{-\top}\xi)
|_{v=0}
+r(w,\xi; x,y),
\end{equation}
where $\kappa_2 = \kappa_2(v,w; x,y)$, $\kappa_{11} = \kappa_{11}(v,w; x,y)$ and $r \in S^{-3}_{1,0}(\cspace)$. 

We note that the transformation rule used above in
 \cite[Thm. 18.2.9]{Hor3} is presented for half-densities.
The proof of the analogous result for distributions, however, is immediate.

Recall that the principal symbol of $C_\rf$ is given by $\s^p(z, \xi) = b(z, \xi^0) (1+|\xi|^2)^{-1-\epsilon}$ and thus
\begin{equation}
\label{c3 principal symbol}
\tilde \s_\eta^p(w,\xi)=\left.b(z, \xi^0)(1+|\xi|^2)^{-1-\epsilon}
\right|_{v=0}
\end{equation}
where $z = (\eta(v,w))_1$.
Notice how the dependence on $z$ appears due to the equation \eqref{eq:rep1}.
Plugging $\tilde \s_\eta^p$ to formula (\ref{princ. symb.}), we
see that the principal symbol of $\s_\tau(w,\xi; x,y)$ is
\begin{equation*}
\s_\tau^p(w,\xi; x,y) = \left.b(z,(\kappa_{11}^{-\top} \xi)^0)(1+|\kappa_{11}^{-\top} \xi|^2)^{-1-\epsilon}\right|_{v=0}
\cdotp J(w,x,y)
\end{equation*}
where $z = (\tau_{(x,y)}(v,w))_1$, $\kappa_{11} = \kappa_{11}(v,w,x,y)$ and
$$J(w,x,y)=|\hbox{det}\kappa_{11}(0,w,x,y)|^{-1}.$$
Finally, we obtain the result by considering the leading term in \eqref{eq:c5_asymp}.
\end{proof}

\begin{theorem}
\label{thm:asymptotics}
For $k_1 = k_2 = k$ we have the asymptotics
	\begin{equation}
		\label{eq:Iasymptotiikka}
		\I(x,y,k,k) = R(x,y) k^{-2-2\epsilon-2p} + {\mathcal O}(k^{-3-2p}),
	\end{equation}
	where $R \in C^\infty(\md \times \md)$ and we have
	\begin{equation}
	\label{eq:R}
		R(x,y)= \frac{1}{4^{2+\epsilon}\pi^2}
		\int_{\R^2} g(w,x,y)\frac{b(z(w),(\kappa_{11}(w)^\top e_1)^0)}{|x-z(w)|^2 |y-z(w)|^2}dw,
	\end{equation}
	where
	\begin{equation*}
		g(w,x,y) = \frac {\det(J \tau_{(x,y)}(v,w))|_{v=0}}{
		|\kappa_{11}(w)^{-\top} e_1|^{2+2\epsilon} |\det \kappa_{11}(w)|} .
	\end{equation*}
	Above, we denote $z(w) = (\tau_{(x,y)}(0,w))_1 = (\tau_{(x,y)}(0,w))_2$ and $\kappa_{11}(w)=\kappa_{11}(0,w,x,y)$.
\end{theorem}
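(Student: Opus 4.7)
The strategy is to specialize the Fourier representation from Lemma~\ref{lem:representation} to $k_1=k_2=k$ and then extract the leading behaviour as $k\to\infty$ by plugging in the principal symbol from \eqref{c6 principal symbol}. Setting $k_1=k_2=k$ in \eqref{eq:fourierrepresentation} makes the dual variable to $w$ vanish, so the Fourier transform collapses to an integral:
\begin{equation*}
\I(x,y,k,k) = \frac{1}{4\pi^2\, k^{2p}}\int_{\R^2} \tilde\s(w,-2ke_1; x,y)\,dw,
\end{equation*}
which is a finite integral since $\tilde\s$ is compactly supported in $w$.

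I would then write $\tilde\s = \tilde\s^p + r$ with remainder $r\in S^{-3-2\epsilon}_{1,0}(\cspace)$ given by the asymptotic expansion \eqref{eq:c5_asymp}. Standard symbol estimates yield $|r(w,-2ke_1;x,y)|\leq C k^{-3-2\epsilon}$ uniformly for $w$ in the support of $\tilde\s$ and $(x,y)\in\md\times\md$; integration in $w$ contributes $\O(k^{-3-2\epsilon-2p})$, which (since $\epsilon>0$) is absorbed into the claimed $\O(k^{-3-2p})$ error. For the principal part, evaluate at $\xi = -2ke_1$ and expand
\begin{equation*}
(1+|2k\,\kappa_{11}^{-\top}e_1|^2)^{-1-\epsilon} = (2k)^{-2-2\epsilon}|\kappa_{11}^{-\top}e_1|^{-2-2\epsilon}\bigl(1+\O(k^{-2})\bigr),
\end{equation*}
while the evenness of $b(z,\cdot)$ from (A1) replaces $(\kappa_{11}^{-\top}(-e_1))^0$ by $(\kappa_{11}^{-\top}e_1)^0$; this supplies the leading factor $k^{-2-2\epsilon}$, with the second-order piece again absorbed into $\O(k^{-3-2p})$.

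It remains to combine this with $H_{(x,y)}(0,w)$ from \eqref{eq:Hdef}. At $v=0$ the two coordinate components coincide, $z_1=z_2=z(w)$, so $H_{(x,y)}(0,w)=\det(J\tau_{(x,y)})|_{v=0}\bigl/\bigl(|x-z(w)|^2|y-z(w)|^2\bigr)$. Tracking the numerical factor $\frac{1}{4\pi^2}\cdot(2k)^{-2-2\epsilon} = \frac{1}{4^{2+\epsilon}\pi^2}k^{-2-2\epsilon}$ and reorganising produces precisely \eqref{eq:R}. The smoothness $R\in C^\infty(\md\times\md)$ follows from smoothness of the map $(x,y,w)\mapsto \tau_{(x,y)}(0,w)$, which in turn is guaranteed by the disjointness condition (A3) together with Remark~\ref{re:rho_wellposed}.

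The main obstacle is ensuring uniformity of the symbol estimates in the parameters $(x,y)$. Although \eqref{eq:diffsymbolappr} already gives such uniform bounds for $\tilde\s$, one must verify that the same uniformity propagates to the remainder $r$ in the principal-symbol decomposition, which depends parametrically on $\tau_{(x,y)}$; this reduces to smoothness of $\tau_{(x,y)}$ and its inverse in $(x,y)\in\md\times\md$ together with a uniform lower bound on $|\det\kappa_{11}|$, both of which are furnished by the geometric setup once the line segments $L_{x',y'}$ stay in the exterior of $\overline{\dom}$.
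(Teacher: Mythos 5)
Your proposal follows essentially the same route as the paper: specialize the Fourier representation of Lemma~\ref{lem:representation} to $k_1=k_2=k$ so that the $w$-Fourier transform collapses to an integral, split $\tilde\s$ into principal symbol plus a lower-order remainder, bound the remainder's contribution by the uniform symbol estimate, and expand $(1+4k^2|\kappa_{11}^{-\top}e_1|^2)^{-1-\epsilon}$ to extract the factor $\tfrac{1}{4^{1+\epsilon}}|\kappa_{11}^{-\top}e_1|^{-2-2\epsilon}k^{-2-2\epsilon}$, which together with $\tfrac{1}{4\pi^2k^{2p}}$ produces the constant $\tfrac{1}{4^{2+\epsilon}\pi^2}$ in \eqref{eq:R}. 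Your remainder bound $r\in S^{-3-2\epsilon}_{1,0}$ is slightly sharper than the paper's $S^{-3}_{1,0}$, and you make explicit the role of the evenness assumption (A1) in handling the sign of $\xi=-2ke_1$, which the paper leaves implicit; otherwise the argument is the same.
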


\begin{proof}

To obtain the leading order asymptotics of $\I$, we consider 
the contributions of the principal symbol and the lower order 
remainder terms separately. We write
\begin{equation*}
\tilde \s(w,\xi;x,y) = \tilde \s^{p}(w,\xi;x,y)+\tilde \s_r(w,\xi;x,y),
\end{equation*}
where $\tilde \s_r(w,\xi;x,y)\in S_{1,0}^{-3}(\cspace)$ is smooth and
compactly supported  in $(w,x,y)$-variables. Thus
$|D_w^\alpha \tilde \s_r(w,\xi;x,y)|\leq C_\alpha (1+|\xi|)^{-3}$
for all multi-indices $\alpha$ and
we infer as for equation \eqref{eq:diffsymbolappr} that
\begin{equation}
\label{r_n- asymptotiikka A-formula}
|({\cal F}_w \tilde \s_r)({\bf 0},-2ke_1;x,y)|
={\cal O}((1+2|k|)^{-3}).
\end{equation}
Thus the contribution of $\tilde \s_r$ to $\I$ is estimated
by the right hand side of (\ref{r_n- asymptotiikka A-formula}).
Let us now consider the principal symbol. 
We substitute the principal symbol (\ref{c6 principal symbol}) to
formula (\ref{eq:fourierrepresentation}) and obtain
\begin{equation}
\label{vec j asympt}
\I(x,y,k,k) = 
\frac{1}{4\pi^{2} k^{2p}}{\cal F}_w \left.\left(
\frac{b(z,(e')^0)H_{(x,y)}(0,w)J(w,x,y)}{(1+4k^2|e'|^2)^{1+\epsilon}}
\right)\right|_{w={\bf 0}} 
+{\cal O}((1+2|k|)^{-3-2p}),
\end{equation}
where $e' = \kappa_{11} (0,w,x,y)^{-\top} e_1$ and $z = (\tau_{(x,y)}(0,w))_1$.
It holds for large $k$ that
\begin{equation*}
(1+4k^2|e'|^2)^{-1-\epsilon} = \left(\frac 14|e'|^{-2}k^{-2}\sum_{j=0}^\infty k^{-2j}(-4|e'|^2)^{-j}\right)^{1+\epsilon} = \frac{1}{4^{1+\epsilon}} |e'|^{-2-2\epsilon}k^{-2-2\epsilon} + {\cal O}(|k|^{-3}),
\end{equation*}
Now the result follows by applying such an estimate to equation \eqref{vec j asympt}.
\end{proof}

\begin{theorem}
The function $R$ is equation \eqref{eq:Iasymptotiikka} satisfies
	\begin{equation*}
		R(x,	x)=\frac{1}{4^{4+\epsilon}\pi^2}\int_{\dom}
		\frac {b(z,(z-x)^0)}{|z-x|^4}\,dz
		\quad {\rm for} \; x \in \md.
	\end{equation*}
\end{theorem}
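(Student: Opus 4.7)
The plan is to specialize the formula for $R(x,y)$ from Theorem \ref{thm:asymptotics} to the diagonal $y = x$, where the ellipses $E_t$ defining $\rho_{(x,y)}$ degenerate to circles centred at $x$. On the diagonal, $\phi(z; x, x) = 2|z - x|$ and $f(z; x, x) = (z - x)^0$, so the map $\tilde\rho_{(x,x)}$ becomes a polar-like coordinate system about $x$: in the parametrisation $z = x + r(\cos\alpha,\sin\alpha)$ one has $\tilde\rho_{(x,x)}(z) = (r,\, r\arcsin(\cos\alpha))^\top$, and a direct $2\times 2$ computation yields $|\det D\tilde\rho_{(x,x)}(z)| = 1$ on each of the two sheets $\{\pm \sin\alpha > 0\}$.

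Two simplifications then drive the calculation. First, from the chain $\tau = \rho^{-1}\circ\eta$ with $F = \tilde\rho^{-1}$ we have $\kappa_{11}(0,w;x,y) = DF(w/2) = (D\tilde\rho_{(x,y)}(z))^{-1}$; specialising to $y = x$ and transposing yields $\kappa_{11}(w)^{-\top}e_1 = (D\tilde\rho_{(x,x)}(z))^\top e_1 = \nabla\tilde\rho^1_{(x,x)}(z) = (z-x)^0$, which is a unit vector. Hence $(\kappa_{11}^{-\top}e_1)^0 = (z-x)^0$ and $|\kappa_{11}^{-\top}e_1| = 1$, placing the direction $(z-x)^0$ into the argument of $b$. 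Second, the $2\times 2$ block determinant formula gives $\det(J\tau_{(x,x)})|_{v=0} = \tfrac{1}{4}(\det\kappa_{11})^2 = \tfrac{1}{4}$, so $g(w,x,x) = \tfrac{1}{4}$.

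Finally, I would change variables $z = \tilde\rho_{(x,x)}^{-1}(w/2)$ in the $w$-integral of Theorem \ref{thm:asymptotics}, noting the Jacobian $|dw| = 4\,|dz|$. Since $\tilde\rho_{(x,x)}$ is 2-to-1, with preimages of a given $w$ differing by reflection across $x + \R e_1$, one fixes a consistent branch of $z(w)$ and accounts for the covering multiplicity; assumption (A1) ensures the integrand behaves correctly under this reflection. The main technical obstacle is the careful bookkeeping of all numerical prefactors — the $\tfrac{1}{2}$ from the definition of $\eta$, the rescaling $w = 2\tilde\rho$, the $\tfrac{1}{4}$ from $\det(J\tau)|_{v=0}$, the $\tfrac{1}{4^{2+\epsilon}\pi^2}$ prefactor of $R(x,y)$, and the 2-to-1 multiplicity — which must combine precisely to yield the stated constant $4^{-4-\epsilon}\pi^{-2}$.
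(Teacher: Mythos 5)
Your proposal follows essentially the same route as the paper's proof, and your structural shortcut $\kappa_{11}(0,w)=D\tilde\rho^{-1}(w/2)=(D\tilde\rho_{(x,y)}(z))^{-1}$ (obtained from $\kappa=\eta^{-1}\circ\rho^{-1}\circ\eta$) is a cleaner way to arrive at the facts the paper simply states: $|\det\kappa_{11}|=1$, $\kappa_{11}^{-\top}e_1=\nabla\tilde\rho^1_{(x,x)}(z)=(z-x)^0$, $\det(J\tau)|_{v=0}=\tfrac14(\det\kappa_{11})^2=\tfrac14$, and hence $g(w,x,x)=\tfrac14$. The paper instead writes out $\kappa_{11}(0,w,x,x)$ explicitly as a $2\times2$ matrix in $\alpha=w_2/w_1$ and reads off the same conclusions; the two are interchangeable, and yours is arguably more transparent. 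You also correctly identify the final change of variable $w=2\tilde\rho_{(x,x)}(z)$ and its factor $dw=4\,dz$, matching the paper's statement $(\det\frac{d}{dw}(\tau_{(x,x)})_1)^{-1}|_{v=0}=4$.

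One point is not quite right and worth flagging. You invoke assumption (A1), the evenness $b(x,\xi^0)=b(x,-\xi^0)$, to ``account for the 2-to-1 covering.'' The two preimages of a given $w$ under $\tilde\rho_{(x,x)}$ are reflections across the line $x+\R e_1$, i.e.\ a $z\mapsto\bar z$ reflection with $(z-x)^0\mapsto\overline{(z-x)^0}$. This is \emph{not} the antipodal involution $\xi^0\mapsto-\xi^0$, so (A1) gives no relation between $b(z,(z-x)^0)$ and $b(\bar z,(\bar z-x)^0)$; the integrand is not in general invariant under the sheet exchange. The paper does not address this branch issue explicitly either (it restricts $\tau$ to a single branch implicitly, see Remark 5.1), so this is really a shared gap rather than a defect of your argument; but your stated reason for why the multiplicity is harmless is incorrect. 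Apart from that, both you and the paper leave the numerical prefactor as a claimed bookkeeping exercise rather than carrying it through explicitly.
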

\begin{proof}
The result can be obtained by simply evaluating the terms in \eqref{eq:R} for the case $x=y$.
First, let us write $z:=(\tau_{(x,x)}(0,w))_1 = (\tau_{(x,x)}(0,w))_2$.
A straightforward calculation yields
\begin{equation*}
	\kappa_{11}(0,w,x,x) =
	\begin{pmatrix}
		\cos \alpha + \alpha \sin \alpha & -\sin \alpha \\
		\sin \alpha - \alpha \cos \alpha &  \cos \alpha
	\end{pmatrix},
\end{equation*}
where $\alpha = w_2/w_1$. In particular, this implies ${\rm det}(\kappa_{11}(0,w,x,x)) = 1$ and $\kappa_{11}(0,w,x,x)^{-\top} e_1 = 1$.
Additionally, one can show that 
\begin{equation*}
\hbox{det}\,(J\tau_{(x,x)}(v,w))=\frac 14 
\quad
{\rm and} 
\quad
\left.\left({\rm det}\left(\frac {d} {dw}(\tau_{(x,x)})_1(v,w)\right)\right)^{-1}\right|_{v=0}=4.
\end{equation*}
The latter term appears, when the domain of integration in \eqref{eq:R} is transformed by $\tau_{(x,x)}$.
Finally, recall that 
\begin{equation*}
	f(z,x,x) =\frac{\nabla_{z} \phi(z,x,x)}{\|\nabla_{z} \phi(z,x,x)\|}
	= \frac{z-x}{|z-x|}.
\end{equation*}
We have
$\alpha = w_2/w_1 = \arcsin \left(e_1\cdotp f(z_1,x,x)\right)$
and, consequently,
$	\kappa_{11}(0,w,x,x)^{-\top} e_1 = 
	f(z_1,x,x)$, which yields the correct directional component in $b$.
By putting the arguments together we conclude that the claim holds.
\end{proof}

\section{Convergence of the measurement}

Let us first reproduce the important ergodic theorem needed. The following claim is obtained e.g. from \cite{Cramer}.

\begin{theorem}
	\label{thm:ergodic}
	Let $X_t$, $t\geq 0$, be a real valued stochastic process with continuous paths. Assume that for some positive constants $C,\epsilon>0$ the condition
	\begin{equation*}
		|\expec X_t X_{t+r}| \leq C (1+r)^{-\epsilon}
	\end{equation*}
	holds for all $t,r\geq 0$. Then almost surely
	\begin{equation*}
		\lim_{K\to\infty} \frac 1 K \int_1^K X_t dt = 0.
	\end{equation*}
\end{theorem}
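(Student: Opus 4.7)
The plan is a classical mean-square estimate along a polynomial subsequence, followed by a path-continuity interpolation to cover all times $K$.

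First, I would set $Y_K := K^{-1}\int_1^K X_t\,dt$ and compute the second moment using Fubini:
\[
\expec Y_K^2 = \frac{1}{K^2}\int_1^K\!\int_1^K \expec(X_sX_t)\,ds\,dt \leq \frac{C}{K^2}\int_1^K\!\int_1^K (1+|t-s|)^{-\epsilon}\,ds\,dt.
\]
A routine estimate of the right-hand side gives $\expec Y_K^2 \leq C' K^{-\delta}$ for some $\delta=\delta(\epsilon)>0$ (with a harmless logarithmic factor at the borderline case $\epsilon=1$; concretely $\delta=\min(\epsilon,1)$). Choosing a subsequence $K_n = n^\alpha$ with $\alpha\delta>1$, the series $\sum_n \expec Y_{K_n}^2$ converges. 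Chebyshev's inequality and the Borel--Cantelli lemma then produce $Y_{K_n}\to 0$ almost surely.

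The \textbf{main obstacle} is promoting this to almost sure convergence for all $K$, since $X_t$ lives in continuous time. Setting $S_K = \int_1^K X_t\,dt$ and $A_n = \sup_{K\in[K_n,K_{n+1}]}|S_K - S_{K_n}|$, the continuity of paths makes $A_n$ a bona fide random variable, bounded pathwise by $\int_{K_n}^{K_{n+1}}|X_t|\,dt$. Applying the hypothesis with $r=0$ gives the uniform bound $\expec X_t^2\leq C$, so Cauchy--Schwarz yields $\expec A_n^2 \leq C(K_{n+1}-K_n)^2$. For $K_n=n^\alpha$ one has $K_{n+1}-K_n = \mathcal{O}(n^{\alpha-1})$, hence $\expec (A_n/K_n)^2 = \mathcal{O}(n^{-2})$, again summable. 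A second application of Borel--Cantelli gives $A_n/K_n\to 0$ almost surely, and for $K\in[K_n,K_{n+1}]$ the elementary bound
\[
|Y_K| = \frac{|S_K|}{K} \leq \frac{|S_{K_n}|+A_n}{K_n} = |Y_{K_n}| + \frac{A_n}{K_n}
\]
closes the argument. I would emphasize that the polynomial (rather than geometric) growth of $K_n$ is essential here: it is what simultaneously delivers summability in the first step and keeps the oscillation ratio $(K_{n+1}-K_n)/K_n$ small enough in the second.
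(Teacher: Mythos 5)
Your proof is correct. Note, however, that the paper does not actually prove Theorem \ref{thm:ergodic}; it simply states it and attributes it to Cram\'er's book, so there is no paper proof to compare against. Your argument is the standard one for such a continuous-time strong law under a decaying correlation hypothesis: a second-moment (Chebyshev) bound $\expec Y_K^2 = \mathcal{O}(K^{-\delta})$, Borel--Cantelli along a polynomial subsequence $K_n = n^\alpha$ chosen so that $\alpha\delta>1$, and then an interpolation step controlling $\sup_{K\in[K_n,K_{n+1}]}|S_K-S_{K_n}|$ via Cauchy--Schwarz and the uniform second-moment bound $\expec X_t^2\le C$ (obtained from the hypothesis at $r=0$), with the polynomial spacing ensuring both the first Borel--Cantelli series and the oscillation series converge. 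The continuity of paths is used exactly where you invoke it, to make $K\mapsto S_K$ pathwise continuous so that the supremum $A_n$ is a well-defined random variable. All the estimates check out, including the borderline logarithmic correction at $\epsilon=1$, and $\delta=\min(\epsilon,1)$ is the right exponent. This is a complete and self-contained proof of the cited result.
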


\begin{proposition}
\label{prop:born_conv}
For any $x,y\in \md$ we have almost surely
\begin{equation*}
	\lim_{K\to \infty} \frac{1}{K-1} \int_1^K k^{2(1+\epsilon+p)} |u_1(x; y,k)|^2 dk = R(x,y).
\end{equation*}
\end{proposition}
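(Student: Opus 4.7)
The plan is to split the averaged integrand into its (deterministic) expectation and a centered remainder, then handle them separately: the expectation part is controlled by the asymptotic formula in Theorem \ref{thm:asymptotics}, while the centered part is shown to vanish almost surely by the ergodic Theorem \ref{thm:ergodic}.

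Write $\alpha = 1+\epsilon+p$ and set $W_k(\omega) = k^{2\alpha}|u_1(x;y,k,\omega)|^2$. First, I would compute
$\expec W_k = k^{2\alpha}\,\I(x,y,k,k) = R(x,y) + \mathcal{O}(k^{-1})$
using Theorem \ref{thm:asymptotics}, so that
\[
\frac{1}{K-1}\int_1^K \expec W_k\,dk = R(x,y) + \mathcal{O}\!\left(\frac{\log K}{K}\right) \longrightarrow R(x,y).
\]
It therefore suffices to prove $\frac{1}{K-1}\int_1^K Z_k\,dk \to 0$ almost surely for the centered process $Z_k := W_k - \expec W_k$.

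The key step is to verify the hypothesis of Theorem \ref{thm:ergodic} for $Z_k$. Since $\rf$ is Gaussian and $u_1$ is linear in $\rf$ (for fixed $x,y,k$), the real and imaginary parts $a_k := \re(k^\alpha u_1)$ and $b_k := \im(k^\alpha u_1)$ form a jointly Gaussian family. Writing $W_k = a_k^2+b_k^2$ and applying Isserlis'/Wick's formula to every fourth-order moment $\expec(a_k^2 a_{k+r}^2)$ etc., the unwanted $\expec(a_k^2)\expec(a_{k+r}^2)$ contributions cancel against $\expec W_k \expec W_{k+r}$, leaving
\[
\expec Z_k Z_{k+r} = 2\bigl[(\expec a_k a_{k+r})^2 + (\expec a_k b_{k+r})^2 + (\expec b_k a_{k+r})^2 + (\expec b_k b_{k+r})^2\bigr].
\]
Corollary \ref{cor:estimate} bounds each of these four mixed second moments by $C_n(1+|r|)^{-n}$ for arbitrary $n$, uniformly in $k \geq 1$ and in $x,y\in \md$. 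Taking $n=1$ (say) and squaring yields $|\expec Z_k Z_{k+r}| \leq C(1+r)^{-2}$, which is more than enough to satisfy the decay assumption in Theorem \ref{thm:ergodic}. Continuity of the paths $k\mapsto Z_k(\omega)$ follows, for almost every $\omega$, from the explicit oscillatory-integral expression of $u_1(x;y,k)$ in terms of the (a.s.\ H\"older continuous) field $\rf$: the integrand depends smoothly on $k$ and $\dom$ is bounded, so differentiation under the integral sign is justified.

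The main obstacle is precisely the reduction of the four-point moment of the squared modulus to second-order correlations; once Wick's theorem reduces it to sums of squared two-point correlations, Corollary \ref{cor:estimate} does the rest. Putting the two pieces together gives
\[
\frac{1}{K-1}\int_1^K W_k\,dk
= \frac{1}{K-1}\int_1^K Z_k\,dk + \frac{1}{K-1}\int_1^K \expec W_k\,dk
\xrightarrow[K\to\infty]{} 0 + R(x,y) = R(x,y)
\]
almost surely, which is the claim.
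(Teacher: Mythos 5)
Your proof follows essentially the same route as the paper: split $k^{2(1+\epsilon+p)}|u_1|^2$ into its expectation (handled by Theorem \ref{thm:asymptotics}) and a centered remainder, reduce the fourth-order Gaussian moments of the remainder to squared second-order correlations via Wick/Isserlis (the paper cites the identity $\expec((X_1^2-\expec X_1^2)(X_2^2-\expec X_2^2))=2(\expec X_1X_2)^2$ from \cite{LPS}), bound those by Corollary \ref{cor:estimate}, and invoke the ergodic Theorem \ref{thm:ergodic}. You are somewhat more explicit than the paper about the convergence rate of $\expec W_k$ and about almost-sure path continuity in $k$ (which Theorem \ref{thm:ergodic} requires but the paper does not comment on), but these are elaborations of the same argument rather than a different one.
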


\begin{proof}
By Theorem \ref{thm:asymptotics} we have that 
$\lim_{k\to\infty} \expec (k^{2(1+\epsilon+p)} |u_1(x; y,k)|^2) = R(x,y)$.
Let us write $Y(x,y,k) = k^{2(1+\epsilon+p)}(|u_1(x; y,k)|^2-\expec |u_1(x; y,k)|^2)$. We decompose $Y$ as
\begin{eqnarray*}
	Y(x,y,k) & = & k^{2(1+\epsilon+p)} \left( ({\rm Re} u_1(x; y,k)^2 - \expec ({\rm Re} u_1(x; y,k))^2) \right. \\
	& & +\left. ({\rm Im} u_1(x; y,k))^2 - \expec ({\rm Im} u_1(x; y,k))^2\right).
\end{eqnarray*}
One can show that
\begin{equation*}
	\expec |Y(x,y,k_1) Y(x,y,k_2)| \leq \frac{C}{1+|k_1-k_2|^2},
\end{equation*}
for any $k_1,k_2\geq 1$, by using Corollary \ref{cor:estimate} and the well-known identity \cite[Lemma 7]{LPS}
\begin{equation*}
	\expec \left((X_1^2-\expec X_1^2)(X_2^2-\expec X_2^2)\right)
	= 2 (\expec X_1 X_2 )^2,
\end{equation*}
where $X_1$ and $X_2$ are zero-mean Gaussian random variables.
The claim follows immediately from Theorem \ref{thm:ergodic}.
\end{proof}

We are ready to prove the main result regarding the forward problem.\\

\emph{Proof of Theorem \ref{thm:main_result}.} It remains to show that for any $x,y\in \md$ we have almost surely
\begin{equation}
	\label{eq:us_conv}
	\lim_{K\to \infty} \frac{1}{K-1} \int_1^K k^{2(1+\epsilon+p)} |u_s(x;y,k)|^2 dk = R(x,y).
\end{equation}
Recall from Corollary \ref{cor:born_series} that 
there is a random index $k_0=k_0(\omega)$ such that $k_0<\infty$ almost surely, and for any $k\geq k_0$ it holds that the Born series converges pointwise in $\md\times \md$. Since $u_1 \in C(\md\times \md)$, also the series $u_r = u_s - u_1$ converges pointwise. Now from inequality \eqref{eq:sup_un} we see that
\begin{equation*}
	\sum_{n\geq 2} \sup_{x,y\in \md} |u_n(x; y,k)| \leq C k^{-\frac 12 -2p},
\end{equation*}
where $C=C(\omega)$ is finite almost surely. Hence it follows that
\begin{equation}
	\label{eq:res_conv}
	\lim_{K\to\infty} \frac 1{K-1}\int_{1}^K k^{2(1+\epsilon+p)} |u_r|^2 dk \leq
	\lim_{K\to\infty} \frac C{K-1} \int_{1}^K k^{1+2\epsilon -2p} dk = 0,
\end{equation}
since $1+2\epsilon-2p<0$ according to Assumption (A2). The result is obtained by
combining Proposition \ref{prop:born_conv} together with equation \eqref{eq:res_conv} and by showing that the integral of the cross-term in
$|u_s|^2 = |u_1|^2+2\re(u_1 \overline{u_r})+|u_r|^2$ decays. Here, the Cauchy--Schwarz inequality yields the desired claim in $L^2([1,K])$ equipped with the weight $(K-1)^{-1}dk$. This proves the statement.
\hfill$\square$

\section{Recoverability}
\label{sec:recoverability}

In this section we prove that given $R(x,x)$, $x\in\md$, we can recover information about the principal symbol of $C_\rf$ and, especially, the local strength $b=b(x,\xi^0)$. We first show that the data reduces to anisotropic spherical Radon transforms 
\begin{equation}
\label{eq:rt}
	(\Rt b)(x',r) = \int_{\S^1} b(x'+r\theta,\theta) d|\theta|
\end{equation}
for $x'\in\plane$ and $r>0$.
Afterwards, we explicitly solve the null space of this transform and give proofs to our main Theorems \ref{thm:invert_S} and \ref{thm:main_result}.
On the notation: below, we often identify $\S^1$ with $[0,2\pi)$ or $\R$ modulo $[0,2\pi)$, while integration over $\S^1$ in \eqref{eq:rt}.
 
\begin{theorem}
\label{thm:recover}
The backscattering data i.e. $R(x,x)$, $x = (x',x_3) \in \md$ determines the integrals $(\Rt b)(x',r)$ for any $x' \in \plane$ and $r>0$.
\end{theorem}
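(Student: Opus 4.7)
The plan is to recast the known quantity $n_0(x',x_3)$ as a one-dimensional Abel--Stieltjes-type transform of $r\mapsto(\Rt b)(x',r)$ and invert this transform, using real-analytic continuation in the measurement variable to supplement the bounded data set $\md$.

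\textbf{Polar reduction.} Starting from the explicit formula for $n_0(x)=R(x,x)$ obtained at the end of Section~\ref{sec:born}, I introduce polar coordinates $z=x'+r\theta$, $r>0$, $\theta\in\S^1$, centered at the projected point $x'\in\plane$. Then $(z-x')^0=\theta$ and $|z-x|^2=r^2+x_3^2$, so the angular integration collects the transform $\Rt b$ and
\begin{equation*}
n_0(x',x_3) \;=\; C\int_0^\infty \frac{r\,(\Rt b)(x',r)}{(r^2+x_3^2)^\alpha}\,dr
\end{equation*}
for the constants $C$ and $\alpha$ appearing in Theorem~\ref{thm:data}(iii).

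\textbf{Analytic extension.} By (A2), $b(z,\cdot)$ extends real-analytically on the direction variable, and (A3) keeps $\md'$ disjoint from $\dom$. Hence the integrand in the defining formula for $n_0(x)$ is real-analytic in $(x',x_3)$ on $(\plane\setminus\overline{\dom})\times(0,\infty)$: analyticity in $x_3$ is immediate from the denominator, while analyticity in $x'$ follows because $x'\mapsto(z-x')^0$ is real-analytic on $\plane\setminus\{z\}$. Since $\dom$ is bounded and simply connected, $\plane\setminus\overline{\dom}$ is connected, so real-analytic continuation from the open measurement set $\md$ determines $n_0$ on the full cylinder $(\plane\setminus\overline{\dom})\times(0,\infty)$.

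\textbf{Stieltjes inversion.} Fix $x'\in\plane\setminus\overline{\dom}$ and substitute $s=x_3^2$, $t=r^2$, $\psi(t)=\tfrac12(\Rt b)(x',\sqrt{t})$. The relation becomes
\begin{equation*}
F(s)\;:=\;n_0(x',\sqrt{s})\;=\;C\int_0^\infty \frac{\psi(t)}{(t+s)^\alpha}\,dt,
\end{equation*}
which reduces (directly if $\alpha=1$, or after one $s$-integration if $\alpha=2$, using decay of the Stieltjes transform at infinity) to the classical Stieltjes transform $G(s)=\int_0^\infty \psi(t)/(t+s)\,dt$. Because $\dom$ is bounded, $\psi$ has compact support, so $G$ extends holomorphically to $\C\setminus(-\infty,0]$ with decay at infinity. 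The Stieltjes inversion formula
\begin{equation*}
\psi(t)\;=\;\frac{1}{2\pi i}\bigl[G(-t-i0)-G(-t+i0)\bigr]
\end{equation*}
then recovers $\psi$, and hence $(\Rt b)(x',r)$ for every $r>0$. This delivers the spherical Radon data for every $x'$ in the open set $\plane\setminus\overline{\dom}$ and every $r>0$, which is the input required for Theorems~\ref{thm:main_res_iso}, \ref{thm:invert_S} and \ref{thm:main_result}; the claimed values at centers $x'\in\overline{\dom}$ are then recovered a posteriori from the reconstruction of $b$ via those theorems.

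The main technical obstacle is the Stieltjes step: one must rigorously justify the integrability/decay of $\psi$, perform the contour deformation around the branch cut $(-\infty,0]$ that produces the jump formula, and verify that the real-analytic continuation of Step 2 propagates $F$ onto an open subset of $(0,\infty)$ large enough to uniquely determine its holomorphic extension. Assumption (A2) is indispensable here, as it is exactly what allows the data on the bounded set $\md$ to be spread over the full half-cylinder needed for the inversion.
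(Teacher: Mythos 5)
Your polar reduction and the Stieltjes (Abel) inversion in $x_3$ are a legitimate alternative to the paper's argument, which instead applies $x_3^{-1}\partial_{x_3}$ repeatedly, lets $x_3\to 0$, and appeals to Weierstrass density of polynomials in $1/r^2$ to read off $\Rt b$ from the resulting moments. Likewise your use of (A2) to get real-analyticity of $n_0$ jointly in $(x',x_3)$, and hence to spread the data from $\md$ to $(\plane\setminus\overline{\dom})\times(0,\infty)$, is sound. So up to the point where you produce $(\Rt b)(x',r)$ for all $x'\in\plane\setminus\overline{\dom}$ and all $r>0$, the proof works.

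The gap is the last sentence of the main argument: ``the claimed values at centers $x'\in\overline{\dom}$ are then recovered a posteriori from the reconstruction of $b$ via those theorems.'' This is circular. Theorems \ref{thm:invert_S} and \ref{thm:main_result} are downstream consumers of the present theorem; their proofs begin by taking $\Ft_{x'\to\xi}\,(\Rt b)(\cdot,r)$, which requires the values of $\Rt b(\cdot,r)$ on all of $\plane$, including centers inside $\overline{\dom}$ (for fixed $r$ the function $x'\mapsto(\Rt b)(x',r)$ is supported on the $r$-neighbourhood of $\dom$, which includes $\overline{\dom}$). You cannot reach these centers by naively continuing $(\Rt b)(\cdot,r)$ analytically, because $b$ is only $C^\infty_0$ (not real-analytic) in the spatial variable, so $(\Rt b)(\cdot,r)$ is merely smooth and compactly supported. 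The paper closes exactly this hole: it fixes a radial real-analytic mollifier $\phi_j$ and forms $h_j(x')=\int_0^\infty\phi_j(r)(\Rt b)(x',r)\,r^s\,dr=\int_\dom\phi_j(|x'-z|)\,\tilde b(z,z-x')\,dz$; by (A2) the integrand is real-analytic in $x'$, so $h_j$ is real-analytic everywhere, can be continued from the region where it is already known, and then $\phi_j\to\delta(\cdot-r_0)$ recovers $\Rt b(x',r_0)$ for every $x'\in\plane$. Without this (or an equivalent) second step, your proof establishes the theorem only for $x'\in\plane\setminus\overline{\dom}$, which is strictly weaker than the statement and insufficient for the applications.
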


\begin{proof}
Let us write
\begin{equation*}
	R(x,x)  \propto  \int_{\plane} \frac{b(y,(y-x')^0)}{|y-x|^4} dy 
	= \frac{1}{2\pi}\int_0^\infty (\Rt b)(x',r) \frac{1}{(r^2+x_3^2)^2} dr,
\end{equation*}
where $y\in\R^2$ was represented in polar coordinates $(\theta,r) \in \S^1 \times \R_+$. Recall that $R(x,x)$ is only known inside the set $\md$. However, notice that we have $(\Rt b)(x',r) = 0$ for all $r < {\rm dist}(\md',D)$, where $\md'$ is the projection of $\md$ to $\plane$ in Assumption (A1). Now, consider function $F(x_3) = R(x,x)$ for any $x = (x',x_3) \in\md$, where $x'$ is fixed. 
Let us extend $F$ to the complex plane. We find that $F$ is analytic in any disc $B(x_3,r)$ such that $r<x_3$. Consequently, we recover $F(z)$, $z\in \R$ and $z>0$, by the Taylor series
\begin{equation}
	F(z) = \sum_{n=0}^\infty \frac{F^{(n)}(x_3)}{n!} (z-x_3)^n.
\end{equation}
Moreover, function $R$ is smooth in the coordinate $x_3$ and hence it holds that
\begin{equation*}
	\left(C \frac{1}{x_3} \p_{x_3} R\right)(x,x)
	= \int_0^\infty (\Rt b)(x',r)\frac{1}{(r^2+x_3^2)^3} dr
\end{equation*}
with a suitable constant $C$. It follows that
\begin{equation*}
	\lim_{x_3\to 0} \left(C \frac{1}{x_3} \p_{x_3} R\right)(x,x)
	= \int_0^\infty (\Rt b)(x',r) r^{-4}  dr.
\end{equation*}
By applying the operator $\frac{C}{x_3} \p_{x_3}$ repeatedly, we recover
the integrals
\begin{equation*}
	\int_0^\infty \frac{(\Rt b)(x',r)}{r^2} Q\left(\frac{1}{r^2}\right) dr,
\end{equation*}
where $Q(t) = \sum_{j=0}^p a_j t^j$, $p\geq 0$. 
The support of $r\mapsto \Rt b(x',r)$ lies in a finite interval $[a,b]$ with $a,b>0$. Since functions of the form $Q(1/r^2)$ are dense in $C([a,b])$, we can uniquely determine $\Rt b(x',r)$ for all $r>0$ and any $x'\in \md'$, where $\md'$ is the projection of $\md$ to $\plane$.

The remaining task is to uniquely continue the data to $\plane \times \R_+$.
Let $\phi$ be a radial analytic function on $\R^2$. Due to assumption (A4) we notice that the function
\begin{equation*}
	h(x) = \int_{\R^2} \phi(y) b\left(x-y, \frac{y}{|y|}\right) |y|^s dy
	= \int_{\dom} \phi(x-z) \tilde b(z,z-x) dz
\end{equation*}
is holomorphic in a $\C^2$-neighbourhood of $\R^2$. Consequently, we can uniquely continue $h$ to any $x \in \R^2$. We choose 
a sequence of analytic functions $\phi_j \in C^\infty_0(0,\infty)$ such that
$\phi_j$ converges to the Dirac delta $\delta(\cdot - r_0)$ in the sense generalized functions. We can then write
\begin{equation*}
	h_j(x) = \int_{\dom} \phi_j(|x-z|) \tilde b(z,z-x) dz = \frac{1}{2\pi}\int_0^\infty h_j(r) \Rt(x,r) r^{s} dr.
\end{equation*}
By taking $j$ to zero, we obtain $\lim_{j \to \infty} 2\pi h_j(x)/ r_0^s = \Rt(x,r_0)$
for any $x\in \R^2$ and $r_0>0$. This concludes the proof.
\end{proof}
Let us now study the general properties of the following transformation $\Rt$ for compactly supported functions $f \in C^\infty_0(\R^2 \times \S^1)$.
We are interested about how $\Rt$ acts in the subspace
\begin{equation}
	\sset = \{ f \in C^\infty_0(\R^2 \times \S^1) \; | \; f(\cdot, z) = f(\cdot, -z)\}
	\subset C^\infty_0(\R^2 \times \S^1)
\end{equation}
and show the following result:
\begin{theorem}
	\label{thm:nullspace}
	The operator $\Rt : \sset \to R(\Rt)$ has a null space
\begin{equation}
	{\mathcal N}(\Rt) =  \left\{ g \in X \; | \; 
	(\Ft g)\left(\xi, T(-\theta)\frac{\xi^\bot}{|\xi|}\right)
	= -(\Ft g)\left(\xi, T(\theta)\frac{\xi^\bot}{|\xi|}\right) 
	\; \textrm{for any } 0 \leq \theta \leq \frac \pi 2\right\}
\end{equation}
where the mapping $T(\theta) : \S^1 \to \S^1$ rotates $\S^1$ rigidly by angle $\theta$ and we have denoted $\Ft = \Ft_{x\to\xi}$.
\end{theorem}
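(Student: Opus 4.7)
The plan is to characterize the kernel by explicitly computing the full Fourier transform of $\Rt f$ in both the base variable $x'$ and the radial variable $r$. First, extend $(\Rt f)(x',r)$ to $r\in\R$; the evenness $(\Rt f)(x',-r) = (\Rt f)(x',r)$ follows automatically from the symmetry $f(\cdot,z) = f(\cdot,-z)$ by substituting $\theta\mapsto -\theta$ in the circle integral. After the change of variable $u = x' + r\theta$ and carrying out the $r$-integration as a Fourier integral, one obtains the distributional identity
\begin{equation*}
(\Ft_{(x',r)\to(\xi,\eta)}\Rt f)(\xi,\eta) \;=\; 2\pi\int_{\S^1}\hat f(\xi,\theta)\,\delta(\eta-\theta\cdot\xi)\,d|\theta|,
\end{equation*}
where $\hat f=\Ft_{x'\to\xi}f$. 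For fixed $\xi\neq 0$ and $|\eta|<|\xi|$, the delta is supported at the two unit vectors symmetric about $e_\xi=\xi/|\xi|$, namely $\theta_\pm = R(\pm\phi)e_\xi$ with $\cos\phi = \eta/|\xi|$, $\phi\in(0,\pi)$. Evaluating with the usual Jacobian yields
\begin{equation*}
\Ft(\Rt f)(\xi,\eta) = \frac{2\pi}{|\xi|\sin\phi}\bigl[\hat f(\xi,R(\phi)e_\xi)+\hat f(\xi,R(-\phi)e_\xi)\bigr],
\end{equation*}
while $\Ft(\Rt f)(\xi,\eta)=0$ identically for $|\eta|>|\xi|$.

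Consequently $\Rt f\equiv 0$ is equivalent to the family of relations
\begin{equation*}
\hat f(\xi,R(\phi)e_\xi)+\hat f(\xi,R(-\phi)e_\xi) = 0,\qquad \xi\neq 0,\ \phi\in(0,\pi).
\end{equation*}
Invoking the evenness $\hat f(\xi,-z) = \hat f(\xi,z)$ inherited from $f\in\sset$, the identities at $\phi$ and $\pi-\phi$ coincide, so one may restrict to $\phi\in[0,\pi/2]$. Finally, using the convention $\xi^\bot=(\xi_2,-\xi_1)$, which gives $e_\xi^\bot = R(-\pi/2)e_\xi$, reparametrize $\phi = \pi/2-\theta$ with $\theta\in[0,\pi/2]$ to verify
\begin{equation*}
R(\phi)e_\xi = -T(-\theta)e_\xi^\bot,\qquad R(-\phi)e_\xi = T(\theta)e_\xi^\bot,
\end{equation*}
and a final use of evenness absorbs the remaining sign to transform the condition into the form stated in the theorem. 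All steps are equivalences, so the containment runs both ways.

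The main obstacle is handling the distributional formula rigorously near the conic set $|\eta|=|\xi|$, where $\theta_+ = \theta_-$ and the Jacobian $1/\sin\phi$ becomes singular. For $f\in\sset$ the map $\phi\mapsto \hat f(\xi,R(\phi)e_\xi)+\hat f(\xi,R(-\phi)e_\xi)$ is smooth, and the singular denominator is harmless because the equivalence is tested by pairing against arbitrary test functions in $(\xi,\eta)$: the critical set has zero measure. The secondary bookkeeping is in tracking the rotation conventions carefully so that the reduction from $\phi\in(0,\pi)$ to $\theta\in[0,\pi/2]$ lands precisely on the stated pair of arguments $T(\pm\theta)\xi^\bot/|\xi|$, with the correct minus sign relating the two values of $\Ft g$.
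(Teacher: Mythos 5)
Your argument is correct, but the mechanism differs from the paper's. Both proofs begin identically by computing $(\Ft_{x'}\Rt f)(\xi,r)=\int_{\S^1}e^{ir\theta\cdot\xi}\hat f(\xi,\theta)\,d|\theta|$, but then diverge. The paper stays with one Fourier transform: it observes that the function $h(r)=\int_{\S^1}e^{ir\cos\psi}g(\psi)\,d|\psi|$ (after rotating so that $\xi$ is aligned with $e_1$) is entire in $r$, so its vanishing for $r>0$ forces all Taylor coefficients at $r=0$ to vanish; this yields the moment conditions $\int_{-1}^1 t^{2j}\,\tilde g(t)(1-t^2)^{-1/2}\,dt=0$, and completeness of even polynomials (Stone--Weierstrass) then gives the oddness of $\tilde g$. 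You instead extend $\Rt f$ evenly to $r\in\R$ and take the full two-variable Fourier transform, landing on a Fourier-slice identity in which the circle integral collapses against $\delta(\eta-\theta\cdot\xi)$ to the two unit vectors $R(\pm\phi)e_\xi$ with $\cos\phi=\eta/|\xi|$. This is perhaps more in the standard idiom of Radon-type transforms and immediately exposes the two-point structure of the kernel condition, at the cost of some distributional bookkeeping: the factor $1/(|\xi|\sin\phi)$ is locally integrable in $\eta$ on $(-|\xi|,|\xi|)$, so the measure is absolutely continuous and vanishes iff the density does a.e., which you correctly note; one should also note that $\Ft_{x'}\Rt f(\xi,\cdot)$ is Schwartz-continuous in $\xi$, so vanishing for $\xi\neq 0$ suffices. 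Your reduction from $\phi\in(0,\pi)$ to $\theta\in[0,\pi/2]$ via evenness and $R(\pi/2)e_\xi=-e_\xi^\bot$ is accurate and lands exactly on the stated relation $(\Ft g)(\xi,T(-\theta)(\xi^\bot)^0)=-(\Ft g)(\xi,T(\theta)(\xi^\bot)^0)$. In short: same first step, different second step (moment/completeness argument versus delta-slice evaluation), both valid; the paper's route avoids distribution theory while yours gives an explicit formula for $\Ft(\Rt f)$ that makes the null-space characterization visually transparent.
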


\begin{proof}
Let us assume that $f\in X$ and
\begin{equation}
	\label{eq:rt=0}
	(\Rt f) (x,r) = 0 \quad \textrm{for all } x\in \R^2, r>0.
\end{equation}
One can show that $(\Ft \Rt f)(\xi,r) = \int_{\S^1} e^{ir\theta \cdot \xi} (\Ft f)(\xi, \theta) d|\theta|$
by applying the Fubini theorem and a change of variables via $y = x+r\theta$.
Let us consider $(\Ft \Rt f)(\xi,r)$ for a fixed frequency $\xi$ and denote 
$\alpha = \arccos \left(\frac{\xi}{|\xi|}\cdot e_1\right)$
for $e_1 = (1,0)^\top$. Then by the Cosine rule we obtain
\begin{equation*}
(\Ft \Rt f)\left(\xi,\frac{r}{|\xi|}\right) =
\int_\alpha^{\alpha+2\pi} e^{ir \cos(\theta-\alpha)}(\Ft f)(\xi, \theta) rd\theta =
\int_{\S^1} e^{ir \cos \theta} g(\theta) d|\theta|,
\end{equation*}
where we have written $g(\theta) = (\Ft f)(\xi, \theta + \alpha)$. Since $f$ is in the null space of $\Rt$ we have
\begin{equation*}
	h(r,\theta) = \int_0^{2\pi} e^{ir \cos \theta} g(\theta) d\theta = 0
\end{equation*}
for any $r>0$.

The periodicity of $f$ in $X$ is inherited by $g$ as $\pi$-periodicity
$g(\theta+\pi) = g(\theta)$ for any $\theta$. Here, the value $\theta+\pi$ is considered modulo $2\pi$.
Let us now compute
\begin{equation*}
(\partial r)^j h(r,\theta)|_{r=0} = \int_0^{2\pi} (i \cos \theta)^j g(\theta) d\theta 
= \int_0^{\pi} \left((i \cos \theta)^j 
	+ (-i \cos \theta)^j \right)g(\theta) d\theta
	= 0
\end{equation*}
for any $j\in\Z_+$.
Clearly, it follows that
$\int_{-1}^1 p_j(t) \tilde g(t) dt = 0$, where $p_j(t) = t^{2j}(1+t^2)^{-1/2}$, for any $j\in \Z_+$ where $\tilde g(t) = g(\arccos t)$. Since the Taylor series of $\tilde g$ at zero can contain only the odd powered polynomials, we must have 
$\tilde g(-t) = - \tilde g(t)$ for any $0\leq t \leq 1$ and thus
\begin{equation*}
	g\left(\frac \pi 2 - \theta\right) = -g\left(\frac \pi 2 + \theta\right)
\end{equation*}
for any $0 \leq \theta \leq \frac \pi 2$. Finally, we get that for any $\xi$ we have
\begin{equation*}
	(\Ft f)\left(\xi, \frac \pi 2 + \alpha(\xi)-\theta\right) 
	= -(\Ft f)\left(\xi, \frac \pi 2 + \alpha(\xi) + \theta\right)
\end{equation*}
for $0 \leq \theta \leq \frac \pi 2$.
\end{proof}

\emph{Proof of Theorem \ref{thm:invert_S}}
In particular, Theorem \ref{thm:nullspace} implies that if $g\in {\mathcal N}(\Rt)$ then
\begin{equation}
	\label{eq:nollapisteet}
	(\Ft g)\left(\xi, (\xi^\bot)^0\right) = 
	(\Ft g)\left(\xi, \xi^0\right) = 	
	0 \quad \textrm{for all } \xi \in \R^2.
\end{equation}

Since $\Rt$ is linear, we have that $\Rt : X \setminus {\mathcal N}(\Rt) \to R(\Rt)$ is invertible in its range. Hence from the knowledge of data
\begin{equation}
	\label{eq:data}
	\{\Rt f(x,r) \; | \; x\in \R^2, r\geq 0\}
\end{equation}
we can recover the class $f + {\mathcal N}(\Rt)$, i.e. for any $\pi$-periodic $\phi \in C([-\pi/2, \pi/2])$, which is symmetric with respect to $0$ and $\pi/2$, we know
\begin{equation}
	\label{eq:duality}
	\int_{-\frac \pi 2}^{\frac \pi 2} (\Ft f)\left(\xi,T(\theta)(\xi^\bot)^0\right) \phi(\theta) d\theta
\end{equation}
for all $\xi \in \R^2 \setminus \{0\}$. In particular, the equation \eqref{eq:nollapisteet} yields that we know values
\begin{equation}
	\label{eq:knownvalues}
	(\Ft f)\left(\xi, \xi^0\right) \quad {\rm and} \quad
	(\Ft f)\left(\xi, (\xi^\bot)^0\right).
\end{equation}
for all $\xi\in \R^2$.
\hfill$\square$
\vspace{0.4cm}\\ 
\indent We now turn our attention to functions of type
\begin{equation}
	\label{eq:easier}
	f(x,y) = \langle y, A(x) y\rangle
\end{equation}
Recall the according to (A5) the matrix field $x \mapsto A(x)$ is smooth and symmetric. Moreover, it has uniformly bounded eigenvalues and satisfies ${\rm supp}(A)\subset D$. Clearly, recovering $f$ everywhere would lead to recovering $A$ everywhere. However, as we noticed above this is not possible using only the properties of $\Rt$.\vspace{0.2cm}\newline

\emph{Proof of Theorem \ref{thm:main_result}} 
Suppose the backscattering data $n_0(x)$, $x \in \md$, is given. According to Theorem \ref{thm:recover} we can uniquely recover functions $(\Rt b)(x',r)$ for all $x'\in \plane$ and $r>0$.
Let us now denote
\begin{equation*}
	A(x)
	=
	\begin{pmatrix}
		a(x) & b(x) \\
		b(x) & c(x)
	\end{pmatrix}
\end{equation*}
and $\hat A(\xi) = (\Ft A)(\xi)$.
Theorem \ref{thm:invert_S} yields the values 
\begin{equation*}
	r_1(\xi) = \frac{\langle \hat A(\xi) \xi, \xi\rangle}{|\xi|^2} 
	\quad {\rm and} \quad 
	r_2(\xi) = \frac{\langle \hat A(\xi) \xi^\bot, \xi^\bot\rangle}{|\xi|^2}
\end{equation*}
for all $\xi\in \R^2\setminus \{0\}$. In polar coordinates
$\xi = \xi(r,\theta) = r(\cos \theta, \sin \theta)^\top$ the function $r_1$ can be written as
\begin{eqnarray*}
	r_1(\xi) & = & \hat a(\xi) \cos^2\theta + 2 \hat b(\xi) \sin\theta \cos \theta
	+ \hat c(\xi) \sin^2\theta \\
	& = & \frac 12 \left(\hat a(\xi) + \hat c(\xi) + (\hat a(\xi) - \hat c(\xi)) \cos 2\theta + 2 \hat b(\xi) \sin 2 \theta\right).
\end{eqnarray*}
Likewise, $r_2$ has the form
\begin{equation*}
	r_2(\xi) = \frac 12 \left(\hat a(\xi) + \hat c(\xi) - (\hat a(\xi) - \hat c(\xi)) \cos 2\theta - 2 \hat b(\xi)\sin 2 \theta\right).
\end{equation*}
Since we know both $r_1$ and $r_2$ for $\xi \in \R^2\setminus \{0\}$, we recover
$r_1(\xi) + r_2(\xi) = {\rm Tr}(\hat A(\xi))$. Additionally, we notice that for $\xi=0$ we have $(\Ft \Rt f)(0,r) = {\rm Tr}(\hat A(0))$.
It follows that we recover $\Ft^{-1}({\rm Tr}(\hat A)) = {\rm Tr}(A)$ everywhere in $\R^2$.

Next consider equation
\begin{equation*}
	\label{eq:rec2}
	r_1(\xi) - r_2(\xi) = (\hat a(\xi) - \hat c(\xi))\cos 2\theta + 2 \hat b(\xi) \sin 2 \theta.
\end{equation*}
Suppose now that one of the functions $a$, $b$ or $c$ is known. It is easy to see that the other two are recovered outside the set $\{\cos 2\theta = 0\}$ or $\{\sin 2\theta = 0\}$. Since the components of $\hat A$ are analytic, we can recover the whole function.
\hfill$\square$

\begin{remark}
\label{re:freq_dependency}
In this work we impose a frequency-dependent model on the Robin coefficient $\rf$ in Assumption (A4). This is required in order to show convergence for the scattered residual term $u_r$ in the proof of Theorem \ref{thm:main_result}. We point out that if the residual term can be neglected, our result can also be applied directly to a frequency-independent case ($p=0$). Moreover, it remains future work to 
study if also in such a case the multiple scattering converges, i.e., 
\begin{equation}
	\lim_{K\to\infty}\frac 1{K-1}\int_1^K k^{2+2\epsilon} |u_n|^2 dk = 0
\end{equation}
for $n\geq 2$.
\end{remark}

\vspace{1cm}
{\em Acknowledgements.}
Helin and P\"aiv\"arinta were supported by the ERC-2010 Advanced Grant, 267700 - InvProb (Inverse Problems). P\"aiv\"arinta was additionally supported by the Finnish Centre of Excellence in Inverse Problems Research. Lassas was supported by the Academy of Finland. We would like to thank Petri Ola and Gunther Uhlmann for fruitful discussions.

\bibliographystyle{abbrv}

\bibliography{citations}

\begin{thebibliography}{10}

\bibitem{AQ96}
M.~L. Agranovsky and E.~T. Quinto.
\newblock Injectivity sets for the {R}adon transform over circles and complete
  systems of radial functions.
\newblock {\em J. Funct. Anal.}, 139(2):383--414, 1996.

\bibitem{Borcea13a}
R.~Alonso, L.~Borcea, and J.~Garnier.
\newblock Wave propagation in waveguides with random boundaries.
\newblock {\em Commun. Math. Sci.}, 11(1):233--267, 2013.

\bibitem{Bal11}
G.~Bal and W.~Jing.
\newblock Corrector theory for elliptic equations in random media with singular
  {G}reen's function. {A}pplication to random boundaries.
\newblock {\em Commun. Math. Sci.}, 9(2):383--411, 2011.

\bibitem{Bal}
G.~Bal, G.~Papanicolaou, and L.~Ryzhik.
\newblock Radiative transport limit for the random {S}chr\"odinger equation.
\newblock {\em Nonlinearity}, 15(2):513--529, 2002.

\bibitem{Bao14}
G.~Bao, S.-N. Chow, P.~Li, and H.~Zhou.
\newblock An inverse random source problem for the {H}elmholtz equation.
\newblock {\em Math. Comp.}, 83(285):215--233, 2014.

\bibitem{BPS14}
E.~Bl{\aa}sten, L.~P{\"a}iv{\"a}rinta, and J.~Sylvester.
\newblock Corners {A}lways {S}catter.
\newblock {\em Comm. Math. Phys.}, 331(2):725--753, 2014.

\bibitem{Tsogka}
L.~Borcea, G.~Papanicolaou, C.~Tsogka, and J.~Berryman.
\newblock Imaging and time reversal in random media.
\newblock {\em Inverse Problems}, 18(5):1247--1279, 2002.

\bibitem{CW97}
S.~N. Chandler-Wilde.
\newblock The impedance boundary value problem for the {H}elmholtz equation in
  a half-plane.
\newblock {\em Math. Methods Appl. Sci.}, 20(10):813--840, 1997.

\bibitem{CW_Peplow05}
S.~N. Chandler-Wilde and A.~T. Peplow.
\newblock A boundary integral equation formulation for the {H}elmholtz equation
  in a locally perturbed half-plane.
\newblock {\em ZAMM Z. Angew. Math. Mech.}, 85(2):79--88, 2005.

\bibitem{ColtonKress13}
D.~Colton and R.~Kress.
\newblock {\em Inverse acoustic and electromagnetic scattering theory},
  volume~93 of {\em Applied Mathematical Sciences}.
\newblock Springer, New York, third edition, 2013.

\bibitem{ColtonKress83}
D.~L. Colton and R.~Kress.
\newblock {\em Integral equation methods in scattering theory}.
\newblock Pure and Applied Mathematics (New York). John Wiley \& Sons, Inc.,
  New York, 1983.
\newblock A Wiley-Interscience Publication.

\bibitem{Cramer}
H.~Cram{\'e}r and M.~R. Leadbetter.
\newblock {\em Stationary and related stochastic processes}.
\newblock Dover Publications, Inc., Mineola, NY, 2004.
\newblock Sample function properties and their applications, Reprint of the
  1967 original.

\bibitem{Nedelec_2d}
M.~Dur{\'a}n, I.~Muga, and J.-C. N{\'e}d{\'e}lec.
\newblock The {H}elmholtz equation with impedance in a half-plane.
\newblock {\em C. R. Math. Acad. Sci. Paris}, 340(7):483--488, 2005.

\bibitem{Nedelec_3d_2}
M.~Dur{\'a}n, I.~Muga, and J.-C. N{\'e}d{\'e}lec.
\newblock The {H}elmholtz equation with impedance in a half-space.
\newblock {\em C. R. Math. Acad. Sci. Paris}, 341(9):561--566, 2005.

\bibitem{Nedelec_3d}
M.~Dur{\'a}n, I.~Muga, and J.-C. N{\'e}d{\'e}lec.
\newblock The {H}elmholtz equation in a locally perturbed half-space with
  non-absorbing boundary.
\newblock {\em Arch. Ration. Mech. Anal.}, 191(1):143--172, 2009.

\bibitem{Solna_book}
J.-P. Fouque, J.~Garnier, G.~Papanicolaou, and K.~S{\o}lna.
\newblock {\em Wave propagation and time reversal in randomly layered media},
  volume~56 of {\em Stochastic Modelling and Applied Probability}.
\newblock Springer, New York, 2007.

\bibitem{HeLaOk2}
T.~Helin, M.~Lassas, and L.~Oksanen.
\newblock Inverse problem for the wave equation with a white noise source.
\newblock {\em accepted to Comm. Math. Phys}, 2014.

\bibitem{Hor3}
L.~H{\"o}rmander.
\newblock {\em The analysis of linear partial differential operators. {III}}.
\newblock Classics in Mathematics. Springer, Berlin, 2007.
\newblock Pseudo-differential operators, Reprint of the 1994 edition.

\bibitem{Kahane}
J.-P. Kahane.
\newblock {\em Some random series of functions}, volume~5 of {\em Cambridge
  Studies in Advanced Mathematics}.
\newblock Cambridge University Press, Cambridge, second edition, 1985.

\bibitem{Karamyan02}
G.~Karamyan.
\newblock The inverse scattering problem for the acoustic equation in a
  half-space.
\newblock {\em Inverse Problems}, 18(6):1673--1686, 2002.

\bibitem{Karamyan03}
G.~Karamyan.
\newblock Inverse scattering in a half space with passsive boundary.
\newblock {\em Comm. Partial Differential Equations}, 28(9-10):1627--1640,
  2003.

\bibitem{Karamyan04}
G.~Karamyan.
\newblock The inverse scattering problem with impedance boundary in a
  half-space.
\newblock {\em Inverse Problems}, 20(5):1485--1495, 2004.

\bibitem{LPS}
M.~Lassas, L.~P{\"a}iv{\"a}rinta, and E.~Saksman.
\newblock Inverse scattering problem for a two dimensional random potential.
\newblock {\em Comm. Math. Phys.}, 279(3):669--703, 2008.

\bibitem{Mclean}
W.~McLean.
\newblock {\em Strongly elliptic systems and boundary integral equations}.
\newblock Cambridge University Press, Cambridge, 2000.

\bibitem{Odeh63}
F.~M. Odeh.
\newblock Uniqueness theorems for the {H}elmholtz equation in domains with
  infinite boundaries.
\newblock {\em J. Math. Mech.}, 12:857--867, 1963.

\bibitem{PZ13}
L.~P\"aiv\"arinta and M.~Zubeldia.
\newblock The inverse {R}obin boundary value problem in a half-space.
\newblock {\em arXiv:1311.6947}.

\bibitem{Shubin}
M.~A. Shubin.
\newblock {\em Pseudodifferential operators and spectral theory}.
\newblock Springer Series in Soviet Mathematics. Springer-Verlag, Berlin, 1987.
\newblock Translated from the Russian by Stig I. Andersson.

\bibitem{Taylor}
M.~E. Taylor.
\newblock {\em Pseudodifferential operators}, volume~34 of {\em Princeton
  Mathematical Series}.
\newblock Princeton University Press, Princeton, N.J., 1981.

\bibitem{Taylor3}
M.~E. Taylor.
\newblock {\em Partial differential equations {III}. {N}onlinear equations},
  volume 117 of {\em Applied Mathematical Sciences}.
\newblock Springer, New York, second edition, 2011.

\bibitem{Vainikko}
G.~Vainikko.
\newblock {\em Multidimensional weakly singular integral equations}, volume
  1549 of {\em Lecture Notes in Mathematics}.
\newblock Springer-Verlag, Berlin, 1993.

\end{thebibliography}

\end{document}